\documentclass[10pt,reqno]{amsart}

\usepackage[dvipsnames]{xcolor}
\usepackage{amsmath,amssymb,amsthm,amsfonts,enumerate,tikz,bm}
\usepackage{mathrsfs}
\usetikzlibrary{matrix,arrows,positioning,calc}
\usepackage[all]{xy}
\usepackage[bookmarksnumbered,colorlinks]{hyperref}
\usepackage{url}
\numberwithin{equation}{section}
\usepackage{graphicx}
\usepackage[T1]{fontenc}
\usepackage{textcomp}
\usepackage{palatino,helvet}

\usepackage{footmisc}

\newtheorem{definition}{Definition}[section]
\newtheorem{remark}[definition]{Remark}
\newtheorem{example}[definition]{Example}

\newtheorem{theorem}[definition]{Theorem}
\newtheorem{proposition}[definition]{Proposition}
\newtheorem{lemma}[definition]{Lemma}
\newtheorem{corollary}[definition]{Corollary}
\theoremstyle{remark}

\usepackage{paralist}

\usepackage{scrextend}
\deffootnote{2em}{0em}{\thefootnotemark\quad}

\newcommand{\Ext}{\mathrm{Ext}}

\newcommand{\Hom}{\mathrm{Hom}}

\newcommand{\A}{\mathcal{A}}
\newcommand{\B}{\mathcal{B}}

\newcommand{\C}{\mathfrak{C}}

\newcommand{\X}{\mathcal{X}}
\newcommand{\Y}{\mathcal{Y}}

\newcommand{\pd}{\mathrm{pd}}

\newcommand{\id}{\mathrm{id}}

\newcommand{\resdim}{\mathrm{resdim}}

\newcommand{\GF}{\mathcal{GF}}

\newcommand{\fd}{\mathrm{fd}}

\newcommand{\Ch}{\mathrm{Ch}}

\makeatletter
\def\@seccntformat#1{%
  \protect\textup{\protect\@secnumfont
    \ifnum\pdfstrcmp{section}{#1}=0 \scshape\bfseries\fi% section # in \scshape and \bfseries
    \ifnum\pdfstrcmp{subsection}{#1}=0 \bfseries\fi% subsection # in \bfseries
    \csname the#1\endcsname
    \protect\@secnumpunct
  }%
}
\makeatother

\begin{document}

\title[Complete Frobenius pairs]{Cotorsion pairs and model structures induced by resolution dimensions relative to Frobenius pairs}
\thanks{2020 MSC: 18G10; 18G20; 18G25; 16E10; 16E65.}
\thanks{Key Words: Frobenius pair, cotorsion pair, resolution dimension, abelian and exact model structures}

\author{V\'ictor Becerril}
\address[V. Becerril]{Centro de Ciencias Matem\'aticas. Universidad Nacional Aut\'onoma de M\'exico. 
 CP58089. Morelia, Michoac\'an, M\'EXICO}
\email{victorbecerril@matmor.unam.mx}

\author{Marco A. P\'erez}
\address[M. A. P\'erez]{Instituto de Matem\'atica y Estad\'istica ``Prof. Ing. Rafael Laguardia''. Facultad de Ingenier\'ia. Universidad de la Rep\'ublica. CP11300. Montevideo, URUGUAY}
\email{mperez@fing.edu.uy}

\maketitle

\begin{abstract}
We explore some methods to obtain cotorsion pairs and  model category structures from a Frobenius pair. Specifically, we consider a type of Frobenius pair $(\X,\omega)$ in an abelian category $\C$ for which there exists a nonnegative integer $m$ such that every object in the right orthogonal complement $\X^{\perp_1}$ of $\X$ under $\Ext^1_{\C}(-,\sim)$ has a special $\omega^\wedge_m$-precover and a special $[\omega^\wedge_m]^{\perp_1}$-preenvelope, where $\omega^\wedge_m$ denotes the class of objects in $\C$ with resolution dimension relative to $\omega$ at most $m$. The cotorsion pairs and model structures obtained from these Frobenius pairs will involve the classes $\X$, $\omega$, $\X^\wedge_m$, $\omega^\wedge_m$, and their orthogonal complements, in the description of its cofibrant, fibrant and trivial objects. As applications of our results, we obtain several cotorsion pairs and model structures related to relative and absolute Gorenstein homological dimensions.    
\end{abstract}

\setcounter{tocdepth}{1}
\tableofcontents

\pagestyle{myheadings}
\markboth{\rightline {\scriptsize V. Becerril and M. A. P\'{e}rez}}
         {\leftline{\scriptsize }}

%%%%%%%%%%%%%%%%%%%%%%%%%%%%%%%%%%%%%
%%%%%%%%%%%%%%%%%%%%%%%%%%%%%%%%%%%%%
%%%%%%%%%%%%%%%%%%%%%%%%%%%%%%%%%%%%%
%%%%%%%%%%%%%%%%%%%%%%%%%%%%%%%%%%%%%

\section*{\textbf{Introduction}}

In their seminal work \cite{AB89}, Auslander and Buchweitz systematically established the basis of what is currently known as  Auslander-Buchweitz approximation theory, having many important applications in a variety of contexts, such as cotilting theory ant $t$-structures. Motivated by their work, the authors of the present paper, with the collaboration of Mendoza and Santiago, proposed in the paper \cite{BMSP} the concept of left Frobenius pairs in abelian categories as a way to induce approximations, cotorsion pairs and exact model category structures from the two classes of objects forming the Frobenius pair. More precisely, given a left Frobenius pair $(\mathcal{X},\omega)$ in an abelian category $\mathfrak{C}$, under some additional assumptions (namely, that $(\mathcal{X},\omega)$ is strong), it is possible to obtain an exact projective model structure (in the sense of Gillespie's \cite[Def. 3.1]{GillespieExact}) on the (weakly idempotent complete exact) subcategory $\X^\wedge$ formed by the objects in $\C$ with finite resolution dimension relative to $\X$, where $\mathcal{X}$ and $\omega^\wedge$ are the classes of cofibrant and trivial objects, respectively (see \cite[Thm. 4.1]{BMSP} for details). On the other hand, left Frobenius pairs which are not necessarily strong are also useful to model general properties appearing in hereditary and complete cotorsion pairs, Gorenstein modules, and relativizations of them (such as Ding projective and Gorenstein AC-projective modules, projectively coresolved Gorenstein flat modules and Gorenstein flat modules relative to semi-definable classes, for instance). These properties are mostly related to resolution dimensions relative to $\X$ and $\omega$, and the construction of left or right approximations by these classes. 

Returning to the previous point on homotopical aspects of strong left Frobenius pairs, although several model structures in relative Gorenstein homological algebra can be obtained by means of such pairs, there are other models  related to certain Gorenstein modules which are not captured by the methods shown in \cite[Thm. 4.1]{BMSP}. As an example, the classes ${}_R\mathcal{GF}$ and ${}_R\mathcal{F} \cap {}_R\mathcal{C}$ of Gorenstein flat and flat-cotorsion left $R$-modules form a left Frobenius pair $({}_R\mathcal{GF},{}_R\mathcal{F} \cap {}_R\mathcal{C})$, which need not be strong (as we shall show below), and some model structures having for instance ${}_R\mathcal{GF}$ as the class of cofibrant objects cannot be deduced from \cite[Thm. 4.1]{BMSP}, like the one represented by the triple $({}_R\mathcal{GF},[{}_R\mathcal{PGF}]^\perp,{}_R\mathcal{C})$ mentioned in the comments right after \cite[Coroll. 4.12]{Saroch}. Recently in the works of Gao, Lu, and Zhang \cite{Chains}, and of El Maaouy \cite{Rachid}, several model structures were obtained from the classes of modules with finite Gorenstein projective dimension and of modules with finite Gorenstein flat dimension. The Hovey triples representing these model structures have as cofibrant objects the classes ${}_R\mathcal{GP}^\wedge_m$, ${}_R\mathcal{GF}^\wedge_m$ and ${}_R\mathcal{PGF}^\wedge_m$ of modules with Gorenstein projective, Gorenstein flat, and projectively coresolved Gorenstein flat dimension $\leq m$, respectively. 

One of the main purposes of the present paper is to describe some general patterns appearing in the construction of the model structures mentioned in the previos paragraph, introducing a new type of left Frobenius pairs, which enables us to provide a method for constructing further exact model structures without needing the strong assumption for the given pair. Namely, these will be left Frobenius pairs $(\X,\omega)$ in an abelian category $\C$ which we call \emph{$\omega^\wedge_m$-complete in $\X^{\perp_1}$}. For such pairs, for every object $M \in \X^{\perp_1}$ there exist short exact sequences of the form $K' \rightarrowtail L \twoheadrightarrow M$ and $M \rightarrowtail K \twoheadrightarrow L'$ with $L, L' \in \omega^\wedge_m$ and $K, K' \in [\omega^\wedge_m]^{\perp_1}$, where $\omega^\wedge_m$ is the class of objects in $\C$ with resolution dimension relative to $\omega$ at most $m$. From certain left Frobenius pairs $(\X,\omega)$ equipped with this completeness condition, and induced by a hereditary and complete cotorsion pair $(\X,\X^\perp)$, it will be possible to obtain abelian or exact model category structures having $\X^\wedge_m$, the class of objects in $\C$ with resolution dimension relative to $\X$ at most $m$, as the class of cofibrant objects. This allows us in particular to recover many of the model structures found in \cite{Chains,Rachid}, but also to produce new ones related to other classes of relative Gorenstein projective and relative Gorenstein flat modules and chain complexes. 

The other main goal of this paper is to present the general properties and some applications of left Frobenius pairs $(\X,\omega)$ in $\C$ which are $\omega^\wedge_m$-complete in $\X^{\perp_1}$. For example, with respect to these pairs, we shall be able to obtain right approximations for objects in $\X^\wedge$ by the class $\X^\wedge_m$ for some $m \in \mathbb{Z}_{\geq 0}$. More specifically, any left Frobenius pair $(\X,\omega)$ in $\C$, which is $\omega^\wedge_m$-complete in $\X^{\perp_1}$ for some $m \in \mathbb{Z}_{\geq 0}$, gives rise to a cotorsion pair $(\X^\wedge_m,[\omega^\wedge_m]^{\perp_1} \cap \X^{\perp_1})$ cut along $\X^\wedge$ (in the sense of \cite{HMP-cut}) and to a left Frobenius pair $(\X^\wedge_m, \omega^\wedge_m \cap [\omega^\wedge_m]^{\perp_1})$ in $\C$. In the case where the left Frobenius pair $(\X,\omega)$ is induced by a hereditary and complete cotorsion pair $(\X,\X^\perp)$ in $\C$, its $\omega^\wedge_m$-completeness in $\X^\perp$ characterizes when $\X^\wedge_m$ is the left half of a hereditary and complete cotorsion pair.

%%%%%%%%%%%%%%%%%%%%%%%%%%%%%%%%%%%%%
%%%%%%%%%%%%%%%%%%%%%%%%%%%%%%%%%%%%%

\subsection*{Organization of the paper}

Starting with Section \ref{sec:prelims}, we recall the necessary preliminaries on homological dimensions, (co)resolution dimensions, orthogonal complements, cotorsion pairs, approximations, and model structures.  

In Section \ref{sec:completeness}, we recall the concept of left Frobenius pairs, define left Frobenius pairs $(\X,\omega)$ in an abelian category $\C$ which are $\omega^\wedge_m$-complete in $\X^{\perp_1}$ for some $m \in \mathbb{Z}_{\geq 0}$, and put on display some basic examples of this notion involving Gorenstein projective and Gorenstein flat modules. More elaborated examples will be presented in the upcoming sections, but we point out that in Example \ref{ExGF} (2) we show that the class ${}_R\mathcal{GF}$ of Gorenstein flat left $R$-modules and the class ${}_R\mathcal{F} \cap {}_R\mathcal{C}$ of flat-cotorsion left $R$-modules form a left Frobenius pair which is $({}_R\mathcal{F} \cap {}_R\mathcal{C})^\wedge_m$-complete for every $m \in \mathbb{Z}_{\geq 0}$, but which is not strong in general.

Several properties of left Frobenius pairs with the completeness condition introduced in Section \ref{sec:completeness} are stated and proved in Section \ref{sec:induced_cotorsion}. These properties will be related to conditions so that the class $\X^\wedge_m$ is the left half of a hereditary and complete cotorsion pair or of a left Frobenius pair. Our first technical Lemma \ref{Lema01} describes $\X^\wedge_m$ as the intersection of $\X^\wedge$ with the left Ext-orthogonal complement ${}^{\perp_1}([\omega^\wedge_m]^{\perp_1} \cap \X^{\perp_1})$. This helps us to prove the main result in this section, namely Theorem \ref{Family}, where we show that every left Frobenius pair $(\X,\omega)$ in $\C$ which is $\omega^\wedge_m$-complete in $\X^{\perp_1}$ gives rise to a cotorsion pair $(\X^\wedge_m, [\omega^{\wedge}_m]^{\perp_1} \cap \X^{\perp_1})$ cut along $\X^{\wedge}$. In the case where $(\X,\omega)$ is induced by a hereditary and complete cotorsion pair $(\X,\X^\perp)$, meaning that $\omega = \X \cap \X^\perp$, then the pair $(\X^\wedge_m, [\omega^{\wedge}_m]^{\perp_1} \cap \X^{\perp_1})$ is actually a hereditary and complete cotorsion pair in $\C$. Complete left Frobenius pairs are also sources of new left Frobenius pairs involving resolution dimensions, as shown in Corollary \ref{coro:inducedXmFP}. As applications of these results, we obtain (1) some previously known hereditary and complete cotorsion pairs of the form $(\X^\wedge_m,[\X^\wedge_m]^{\perp})$, where $\X$ is the class of Gorenstein projective left $R$-modules over an Artin algebra, or the class of (projectively coresolved) Gorenstein flat left $R$-modules over any ring; and (2) some new hereditary and complete cotorsion pairs, also of the previous form, taking for instance $\X$ as the class of Ding projective or Gorenstein AC-projective left $R$-modules. 

Homotopical aspects for complete left Frobenius pairs are investigated in Section \ref{sec:model_structures}. Concretely, we provide four methods to obtain hereditary abelian or exact model category structures from a left Frobenius pair $(\X,\omega)$, induced by a hereditary and complete cotorsion pair $(\X,\X^\perp)$ in an abelian category $\C$, which is $\omega^\wedge_m$-complete. In the first three methods described in Theorems \ref{HoveyinC}, \ref{thm:models_two_bounds} and \ref{thm:models_two_bounds-2}, we shall require the existence of some $m \in \mathbb{Z}_{\geq 0}$ such that $\omega^\wedge_m$ is the left half of a complete cotorsion pair in $\C$. We point out that in Theorem \ref{HoveyinC}, assuming that $\C$ has enough projective objects, the thickness of the class $\X^\perp$ is equivalent to requiring that $\omega$ coincides with the class of projective objects (or equivalently, that the auxiliary pair $(\omega^\wedge_m,[\omega^\wedge_m]^{\perp_1})$ is hereditary). In this situation, we have the Hovey triple $(\X^\wedge_m, \X^\perp, [\omega^\wedge_m]^\perp)$. In Theorem \ref{thm:models_two_bounds}, on the other hand, we obtain the Hovey triple $(\X^\wedge_m, \omega^\wedge_n, [\omega^\wedge_m]^{\perp_1} \cap \X^\wedge_n)$, with $n \geq m$, in the exact subcategory $\mathcal{X}^\wedge_n$. With respect to the fourth method presented in Theorem \ref{thm:model_two_cotorsion_pairs}, we drop the assumption that $\omega^\wedge_m$ is the left half of a complete cotorsion pair in $\C$. Instead, we require two hereditary and complete cotorsion pairs $(\X,\X^\perp)$ and $(\Y,\Y^\perp)$ having the same kernel, that is, $\X \cap \X^\perp = \Y \cap \Y^\perp$, for which there exists $m \in \mathbb{Z}_{\geq 0}$ such that the induced left Frobenius pairs $(\X,\X \cap \X^\perp)$ and $(\Y, \Y \cap \Y^\perp)$ are $(\X \cap \X^\perp)^\wedge_m$-complete. Under these assumptions, we obtain the Hovey triple $
(\mathcal{Y}^\wedge_m, \mathcal{X}^\wedge_n, [\mathcal{X}^\wedge_m]^{\perp_1} \cap \mathcal{Y}^\wedge_n)$ for every $n \geq m$ in the exact subcategory $\mathcal{Y}^\wedge_n$. As mentioned previously, these methods capture general patterns that we have noticed in the construction of many of the abelian and exact Gorenstein model structures presented in \cite{Rachid,Chains,WeiWangLiu-DingModels}. Then, in particular we obtain as examples several of these model structures, but also we are able to obtain new ones thanks to the general approach of the previous four theorems (see Examples \ref{ex:Gorenstein-models-Rmod} (2), (3) and (4), \ref{ex:pairs_in_Ch}, \ref{ex:method2} (2), \ref{ex:Ch-models-containments} (2), and \ref{ex:method4} (2)). 

%We point out that the situation to obtain model structures is particularly different in the case of Gorenstein flat modules for which we present a different method to get model structures.

Finally, in Section \ref{sec:applications}, we provide more applications, like some ways to induce several instances of complete left Frobenius pairs in the category of chain complexes from a complete left Frobenius pair of modules. In the setting of module-valued representations over left rooted quivers, we consider again in Proposition \ref{coro:Rep_omega_complete} left Frobenius pairs $(\X,\omega)$ of $R$-modules such that $\omega^\wedge_m$ is the left half of a complete cotorsion pair for some $m \in \mathbb{Z}_{\geq 0}$. This allows us to obtain a left Frobenius pair in the category of representations. We close this section showing how left Frobenius pairs and their completeness are reflected in adjoint pairs of exact functors.

%%%%%%%%%%%%%%%%%%%%%%%%%%%%%%%%%%%%%
%%%%%%%%%%%%%%%%%%%%%%%%%%%%%%%%%%%%%
%%%%%%%%%%%%%%%%%%%%%%%%%%%%%%%%%%%%%
%%%%%%%%%%%%%%%%%%%%%%%%%%%%%%%%%%%%%

\section{Preliminaries} \label{sec:prelims}

In what follows, $\mathbb{Z}$ will denote the set of integers, and $\mathbb{Z}_{\geq 0}$ and $\mathbb{Z}_{> 0}$ the subsets of nonnegative and positive integers. 

\subsection*{Settings}

Throughout this paper, $\mathfrak{C}$ will denote an abelian category (not necessarily with enough projective and injective objects). The main examples of abelian categories considered here will by the categories ${}_R\mathsf{Mod}$ and $\mathsf{Mod}_R$ of left and right (unital) modules over an associative ring $R$ with identity, and the categories $\mathsf{Ch}(\mathfrak{C})$ and $\mathsf{Rep}(\mathfrak{C})$ of chain complexes of objects in $\mathfrak{C}$ and of $\mathfrak{C}$-valued representations. In the case where $\mathfrak{C} = {}_R\mathsf{Mod}$ or $\mathfrak{C} = \mathsf{Mod}_R$, the corresponding category of complexes will be denoted by ${}_R\mathsf{Ch}$ or $\mathsf{Ch}_R$, respectively. Given $X_\bullet = (X_m,\partial_m)_{m \in \mathbb{Z}} \in \Ch(\mathfrak{C})$, we denote by $Z_m(X_\bullet)$ the kernel of the differential map $\partial_m \colon X_m \to X_{m-1}$.  
 
Concerning functors defined on $R$-modules, $\Ext ^i _R(-,\sim)$ denotes the right $i$-th derived functor of $\Hom_R(-,\sim)$, where $i \in \mathbb{Z}_{>0}$. If $M \in \mathsf{Mod}_R$ and $N \in {}_R\mathsf{Mod}$, $M \otimes_R N$ denotes the tensor product of $M$ and $N$ over $R$. Extension functors can also be defined in the setting of abelian categories $\mathfrak{C}$ using the Baer-Yoneda description, where we shall use the notation $\Ext^i_{\mathfrak{C}}(M,N)$ for the group of $i$-fold extensions of $M, N \in \mathfrak{C}$. If we let $i = 0$, then $\Ext^0_{\mathfrak{C}}(M,N)$ will be regarded as the abelian group $\Hom_{\mathfrak{C}}(M,N)$ of morphisms from $M$ to $N$. 

Monomorphisms and epimorphisms in $\mathfrak{C}$ will be usually denoted by $\rightarrowtail$ and $\twoheadrightarrow$, respectively.

%%%%%%%%%%%%%%%%%%%%%%%%%%%%%%%%%%%%%
%%%%%%%%%%%%%%%%%%%%%%%%%%%%%%%%%%%%%

\subsection*{Orthogonal classes and homological dimensions}

Let $\mathcal{X} \subseteq \mathfrak{C}$ and $M \in \mathfrak{C}$. For each $i \in \mathbb{Z}_{> 0}$, let
\[
\X^{\perp_i} := \{ N \in \mathfrak{C}: \Ext^i_{\mathfrak{C}}(-,N) |_{\X} = 0 \} \mbox{ and } \X^{\perp} := \bigcap_{i \in \mathbb{Z}_{> 0}} \X^{\perp_i}
\]
denote the $i$-th and total \emph{right orthogonal complements} of $\mathcal{X}$. Dually, we have the $i$-th and total \emph{left orthogonal complements} ${}^{\perp_i}\X$ and ${}^{\perp}\X$. 
  
The \emph{relative projective dimension} of $M$ with respect to $\X$ is defined as the value
\[
\pd_{\X}(M) := \min \{n \in \mathbb{Z}_{\geq 0} \ : \ \Ext^i_{\mathfrak{C}}(M,-)|_{\X} = 0 \mbox{ for all } i > n\},
\]
where $\pd_{\X}(M) = \infty$ if no such an integer $n$ exists. Dually, we denote by $\id_{\X}(M)$ the \emph{relative injective dimension} of $M$ with respect to $\X$. Furthermore, given a class of objects $\Y \subseteq \mathfrak{C}$, we set  
\[
\pd_{\X}(\Y) := \sup\{ \pd_{\X}(Y) \ : \ Y \in \Y \} \ \ \mbox{ and } \ \ \id_{\X}(\Y) := \sup \{ \id_{\X}(Y) \ : \ Y \in \Y \}. 
\]
If $\X = \mathfrak{C}$, we shall simply write $\pd(M)$, $\id(M)$, $\pd(\Y)$ and $\id(\Y)$ for the (absolute) projective and injective dimensions of $M$ and $\Y$, respectively. 

Regarding the $i$-th derived functors of the tensor product $- \otimes_R \sim$, for $M \in {}_R\mathsf{Mod}$ or $M \in \mathsf{Mod}_R$, we denote the \emph{flat} or \emph{weak dimension} of $M$ by $\fd(M)$.

%%%%%%%%%%%%%%%%%%%%%%%%%%%%%%%%%%%%%
%%%%%%%%%%%%%%%%%%%%%%%%%%%%%%%%%%%%%

\subsection*{Relative resolution dimension}

Again, let $\mathcal{X} \subseteq \mathfrak{C}$ and $M \in \mathfrak{C}$. By an \emph{$\X$-resolution} of $M$ we mean an exact complex 
\[
\cdots \to X_n \to X_{n-1} \to \cdots \to X_1 \to X_0 \twoheadrightarrow M
\] 
where $X_i \in \X$ for every $i \in \mathbb{Z}_{\geq 0}$. The $i$-th \emph{$\X$-syzygy} in the previous resolution is the kernel of the map $X_{i-1} \to X_{i-2}$, where $X_{-1} = M$ and $X_{-2} = 0$. The \emph{$\X$-resolution dimension} of $M$, denoted  by $\resdim_{\X}(M)$, is the smallest $n \in \mathbb{Z}_{\geq 0}$ such that there is an $\X$-resolution where $X_i = 0$ for every $i > n$. If such an $n$ does not exist, we set $\resdim_{\X}(M) := \infty$. Also, we set the notations
\[
\X^{\wedge}_n := \{ M \in \mathfrak{C} \ : \ \resdim_{\mathcal{X}}(M) \leq n \} \ \ \text{ and } \ \ \X^\wedge := \bigcup_{n \in \mathbb{Z}_{\geq 0}} \X^\wedge_n.
\]
Given a class of objects $\Y \subseteq \mathfrak{C}$, we set  
\[
\resdim_{\X}(\Y) := \sup\{ \resdim_{\X}(Y) \ : \ Y \in \Y \}.
\]
The resolution dimension defines a function
\begin{align*}
\resdim_{\mathcal{X}}(-) & \colon \C \to \mathbb{Z}_{\geq 0} \cup \{ \infty \},
\end{align*}
for which one can consider the following notion of stability.

\begin{definition}\label{def:stable}
We say that $\resdim_{\mathcal{X}}(-)$ is \textbf{stable} if for every $M \in \C$ and $n \in \mathbb{Z}_{\geq 0}$, the following are equivalent:
\begin{enumerate}[(a)]
\item[(a)] $\resdim_{\mathcal{X}}(M) \leq n$.

\item[(b)] Any $n$-th $\mathcal{X}$-syzygy (in any $\X$-resolution) of $M$ belongs to $\mathcal{X}$. 
\end{enumerate}
\end{definition}

We can note the following property concerning stability.

\begin{proposition}\label{prop:stability_in_ses}
Let $\X \subseteq \mathfrak{C}$ be a class of objects such that $\resdim_{\X}(-)$ is stable. Then for every short exact sequence $A \rightarrowtail X \twoheadrightarrow C$ with $X \in \X$ and $A, C \in \X^\wedge$, and every $n \in \mathbb{Z}_{> 0}$, one has that $\resdim_{\X}(C) \leq n$ if, and only if, $\resdim_{\X}(A) \leq n-1$.
\end{proposition}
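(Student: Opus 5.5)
The argument uses only the stability of $\resdim_{\X}(-)$ from Definition \ref{def:stable}, by splicing the given short exact sequence with resolutions of $A$ and $C$. Fix the sequence $A \rightarrowtail X \twoheadrightarrow C$ with $X \in \X$, $A, C \in \X^\wedge$, and fix $n \in \mathbb{Z}_{>0}$. Note that the sequence is itself the first step of an $\X$-resolution of $C$. Its first $\X$-syzygy is $A$.

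For the ``if'' direction, suppose $\resdim_{\X}(A) \leq n-1$. Choose an $\X$-resolution of $A$ of length at most $n-1$, say $0 \to X_{n-1} \to \cdots \to X_0 \twoheadrightarrow A$. Splice it with $A \rightarrowtail X \twoheadrightarrow C$. This gives an $\X$-resolution
\[
0 \to X_{n-1} \to \cdots \to X_0 \to X \twoheadrightarrow C
\]
of length at most $n$, so $\resdim_{\X}(C) \leq n$. This direction does not even need stability.

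For the ``only if'' direction, suppose $\resdim_{\X}(C) \leq n$. Take any $\X$-resolution of $A$, say $\cdots \to X_1 \to X_0 \twoheadrightarrow A$. Such a resolution exists since $A \in \X^\wedge$. Splicing it with $A \rightarrowtail X \twoheadrightarrow C$ gives an $\X$-resolution of $C$ whose first syzygy is $A$. Its $k$-th syzygy, for $k \geq 1$, is precisely the $(k-1)$-th $\X$-syzygy of $A$ in the chosen resolution. By stability, the implication (a)$\Rightarrow$(b) applied to $C$ says that the $n$-th $\X$-syzygy of $C$ in this resolution lies in $\X$. Hence the $(n-1)$-th syzygy $K$ of $A$ lies in $\X$.

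Truncating the resolution of $A$ at $K$ then gives $0 \to K \to X_{n-2} \to \cdots \to X_0 \twoheadrightarrow A$. This is an $\X$-resolution of length $n-1$, so $\resdim_{\X}(A) \leq n-1$. The case $n=1$ is covered as well: the $0$-th syzygy is $A$ itself, so stability gives $A \in \X$ directly.

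The main point needing care is the indexing convention for syzygies. With $X_{-1}=M$, the $0$-th syzygy is $M$ itself, and the shift by one under splicing must be checked against this convention. One must also confirm that ``any $n$-th syzygy in any resolution'' in Definition \ref{def:stable} genuinely applies to the spliced resolution. It does, since that is an honest $\X$-resolution of $C$.
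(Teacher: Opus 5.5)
Your argument is correct and complete. The ``if'' direction is plain splicing. In the ``only if'' direction you correctly identify the $n$-th syzygy of $C$ in the spliced resolution with the $(n-1)$-th syzygy of $A$, apply the (a)$\Rightarrow$(b) half of stability to $C$, and truncate.

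The paper states this proposition without proof, and your splicing argument is the natural verification it leaves to the reader. You never use the hypothesis $C \in \X^\wedge$; it is in fact automatic from $A \in \X^\wedge$ by the same splicing.
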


%%%%%%%%%%%%%%%%%%%%%%%%%%%%%%%%%%%%%
%%%%%%%%%%%%%%%%%%%%%%%%%%%%%%%%%%%%%

\subsection*{Cotorsion pairs} 

Two classes of objects $\X, \Y \subseteq \mathfrak{C}$ form a \emph{cotorsion pair} $(\X,\Y)$ in $\mathfrak{C}$ if $\X^{\perp_1} = \Y$ and $\X = {}^{\perp_1}\Y$. A cotorsion pair $(\X,\Y)$ is \emph{left complete} if for any $C \in \mathfrak{C}$ there exists a short exact sequence $Y' \rightarrowtail X \twoheadrightarrow C$ with $X \in \mathcal{X}$ and $Y' \in \mathcal{Y}$ (in other words, every object of $\mathfrak{C}$ has a special $\X$-precover). Dually, $(\mathcal{X,Y})$ is \emph{right complete} if for any $C \in \mathfrak{C}$ there exists a short exact sequence $Y' \rightarrowtail X \twoheadrightarrow C$ with $X \in \mathcal{X}$ and $Y' \in \mathcal{Y}$ (in other words, every object of $\mathfrak{C}$ has a special $\Y$-preenvelope). A cotorsion pair that is both left and right complete is called \emph{complete}. Finally, a cotorsion pair $(\X,\Y)$ is \emph{hereditary} if $\id_{\X}(\Y) = 0$. 

Perfect cotorsion pairs are particular instances of complete cotorsion pairs, and will also appear in this paper. A cotorsion pair $(\X,\Y)$ in $\C$ is \emph{perfect} if every object in $\C$ has an $\X$-cover and a $\Y$-envelope.

%%%%%%%%%%%%%%%%%%%%%%%%%%%%%%%%%%%%%
%%%%%%%%%%%%%%%%%%%%%%%%%%%%%%%%%%%%%

\subsection*{Hereditary abelian model structures} 

A model structure on an abelian category $\C$ is given by three distinguished classes of morphisms in $\C$, called cofibrations, fibrations and weak equivalences, satisfying a series of axioms (see \cite{HoveyBook}). Following \cite{Hov02}, a model structure on $\C$ is said to be \emph{abelian} if the following two conditions are satisfied:
\begin{itemize}
\item a morphism $f$ is a cofibration if, and only if, $f$ is a monomorphism and ${\rm CoKer}(f)$ is a cofibrant object; and

\item a morphism $g$ is a fibration if, and only if, $g$ is an epimorphism and ${\rm Ker}(g)$ is a fibrant object.
\end{itemize}
In this paper, we shall consider abelian model structures obtained by means of Hovey triples. Indeed, there is an appealing one-to-one correspondence between abelian model structures and cotorsion pairs, proved in \cite[Thm. 2.2]{Hov02}. We shall only be interested in the particular case where the cotorsion pairs are hereditary. In \cite[Main Thm. 1.2]{GillespieHereditary}, Gillespie shows that if $(\mathcal{X},\mathcal{Y}')$ and $(\mathcal{X}',\mathcal{Y})$ are hereditary and complete cotorsion pairs in $\C$ such that $\mathcal{X}' \subseteq \mathcal{X}$, $\mathcal{Y}' \subseteq \mathcal{Y}$ and $\mathcal{X} \cap \mathcal{Y}' = \mathcal{X}' \cap \mathcal{Y}$, then there exists a unique thick class $\mathcal{W}$ such that $(\mathcal{X},\mathcal{W},\mathcal{Y})$ is a \emph{Hovey triple}, that is, $\mathcal{X}' = \mathcal{X} \cap \mathcal{W}$ and $\mathcal{Y}' = \mathcal{Y} \cap \mathcal{W}$. Concretely, the class $\mathcal{W}$ is given by 
\begin{align*}
\mathcal{W} & = \{ M \in \C \, \text{ : } \, \text{there is a s.e.s. } M \rightarrowtail Y \twoheadrightarrow X' \, \text{ with $Y \in \Y'$ and $X' \in \X'$} \} \\
& = \{ M \in \C \, \text{ : } \, \text{there is a s.e.s. } Y' \rightarrowtail X \twoheadrightarrow M \, \text{ with $X \in \X'$ and $Y' \in \Y'$} \}. 
\end{align*}
Moreover, the hereditary abelian model structure associated to this Hovey triple is described as follows:
\begin{itemize}
\item (trivial) cofibrations are given by the class of monomorphisms with cokernel in $\mathcal{X}$ (resp., in $\mathcal{X}'$),

\item (trivial) fibrations are given by the class of epimorphisms with kernel in $\mathcal{Y}$ (resp., in $\mathcal{Y}'$), and

\item weak equivalences are given by the compositions of trivial cofibrations followed by trivial fibrations. 
\end{itemize}
By \cite[Thm. 2.6 (i)]{GillespieHereditary}, the homotopy category of this model structure is equivalent to the quotient category $\mathcal{X} \cap \mathcal{Y} / \sim$ of \emph{bifibrant} (that is, fibrant and cofibrant) objects such that for every pair of morphisms $f, g \colon X \to Y$ with $X, Y \in \mathcal{X} \cap \mathcal{Y}$, one has that $f \sim g$ if, and only if, $f - g$ factors through an object in $\mathcal{X} \cap \mathcal{W} \cap \mathcal{Y}$. 

%%%%%%%%%%%%%%%%%%%%%%%%%%%%%%%%%%%%%
%%%%%%%%%%%%%%%%%%%%%%%%%%%%%%%%%%%%%
%%%%%%%%%%%%%%%%%%%%%%%%%%%%%%%%%%%%%
%%%%%%%%%%%%%%%%%%%%%%%%%%%%%%%%%%%%%

\section{Relative completeness of Frobenius Pairs} \label{sec:completeness}

In this section, we first recall the concept of left Frobenius pairs, along with some properties and examples. After that, we focus on presenting the main concept of this paper, namely, left Frobenius pairs $(\X,\omega)$ which are $\omega^\wedge_m$-complete in $\X^{\perp_1}$, for some $m \in \mathbb{Z}_{\geq 0}$  (see Definition \ref{Def-w-complete} below). This is motivated by the following. We know from \cite[Thms. 2.11 \& 3.6]{BMSP} that if $(\X,\omega)$ is a left Frobenius pair in an abelian category $\mathfrak{C}$, then $\X^\wedge$ is a thick (and so weakly idempotent complete exact) subcategory of $\mathfrak{C}$ and that $(\X,\omega^\wedge)$ is a complete cotorsion pair in $\X^\wedge$. Moreover, if $(\X,\omega)$ is strong, then $(\omega,\X^\wedge)$ is also a complete cotorsion pair in $\X^\wedge$, compatible with $(\X,\omega^\wedge)$ in the sense that $\omega = \X \cap \omega^\wedge$ and $\omega^\wedge = \X^{\perp_1} \cap \X^\wedge$. This allows to obtain a projective exact model structure on $\X^\wedge$ where $\X$ and $\omega^\wedge$ are the classes of cofibrant and trivial objects, respectively. Using complete (and not necessarily strong) left Frobenius pairs, we show in Section \ref{sec:induced_cotorsion} how to induce other families of cotorsion pairs and abelian and exact model structures involving $\resdim_{\X}(-)$ and $\resdim_{\omega}(-)$. 

Left Frobenius pairs are a key concept in this paper, since some interesting homological and homotopical outcomes can be obtained from the existence of such pairs, as mentioned previously. For instance, some sufficient conditions that a class of objects $\X \subseteq \mathfrak{C}$ needs to fulfill so that $\resdim_{\X}(-)$ is stable are included in the concept of left Frobenius pairs. From \cite[Def. 2.5]{BMSP}, we know that a \emph{left Frobenius pair} is formed by two classes of objects $\X, \omega \subseteq \mathfrak{C}$ such that:
\begin{enumerate}[(1)]
\item[{\sf (FP1)}] $\mathcal{X}$ is left thick, that is, it is closed under extensions, under taking kernels of epimorphisms between its objects (epikernels, for short) and direct summands. 

\item[{\sf (FP2)}] $\omega$ is closed under direct summands (in $\mathfrak{C}$).

\item[{\sf (FP3)}] $\id_{\mathcal{X}}(\omega) = 0$.

\item[{\sf (FP4)}] $\omega$ is a \emph{relative cogenerator} in $\mathcal{X}$, that is, $\omega \subseteq \X$ and for every object $X \in \mathcal{X}$ there is an exact sequence $X \rightarrowtail W \twoheadrightarrow X'$ where $W \in \omega$ and $X' \in \mathcal{X}$. 
\end{enumerate}
If in addition $\pd_{\X}(\omega) = 0$ and $\omega$ is a relative generator in $\X$ (that is, the dual of conditions {\sf (FP3)} and {\sf (FP4)} are satisfied), then the left Frobenius pair $(\X,\omega)$ is called \emph{strong}. The dual concept is called \emph{right (strong) Frobenius pair}.  In this paper, only left Frobenius pairs will be considered, and so by ``\emph{Frobenius pair}'' we shall mean a left Frobenius pairs. This will imply in particular that, for simplicity, all of the results presented in this paper will be stated in terms of left Frobenius pairs, although dual versions for right Frobenius pairs are also valid.

\begin{example}\label{ex:FP-GP}
Let ${}_R\mathcal{P}$ and ${}_R\mathcal{GP}$ denote the classes of projective and Gorenstein projective left $R$-modules, respectively. It is known that $$({}_R\mathcal{GP},{}_R\mathcal{P})$$ is a strong Frobenius pair (see \cite[Prop. 6.1]{BMSP}). We shall refer to it as the \textbf{Gorenstein projective Frobenius pair}.
\end{example}

More examples, some previously known in the literature and some other new, will be presented in the sequel. The following result summarizes some basic important properties of Frobenius pairs that will be very useful in the sequel.

\begin{proposition}\label{prop:Frobenius_stable}  
Let $(\mathcal{X},\omega)$ be a Frobenius pair in $\mathfrak{C}$. The following assertions hold true:
\begin{enumerate}[(1)]
\item $\resdim_{\mathcal{X}}(-)$ is stable.

\item $\X^\wedge_m$ is left thick for every $m \in \mathbb{Z}_{\geq 0}$.
\end{enumerate}  
\end{proposition}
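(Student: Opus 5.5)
The plan is to derive both assertions from Auslander–Buchweitz type arguments, using only the axioms {\sf (FP1)}–{\sf (FP4)}. The first step is a stability lemma for $\X$: given a short exact sequence $K \rightarrowtail X_0 \twoheadrightarrow M$ with $X_0 \in \X$, one has $M \in \X$ if and only if $K \in \X$, provided $M \in \X^\wedge$. The implication "$M \in \X \Rightarrow K \in \X$" is immediate, since $\X$ is closed under epikernels by {\sf (FP1)}.

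For the converse, suppose $K \in \X$ and $M \in \X^\wedge$. Write $M$ via an $\X$-resolution of finite length. Using {\sf (FP3)} and {\sf (FP4)}, the standard Auslander–Buchweitz argument shows the following: every $M \in \X^\wedge$ admits a short exact sequence $W_M \rightarrowtail X_M \twoheadrightarrow M$ with $X_M \in \X$ and $W_M \in \omega^\wedge$, and moreover $\id_{\X}(\omega^\wedge) = 0$. The first claim is proved by induction on $\resdim_{\X}(M)$, taking pushouts along the cogenerator sequences $X \rightarrowtail W \twoheadrightarrow X'$. The second follows by dimension shifting from $\id_{\X}(\omega)=0$.

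With this in hand, I compare the two presentations $K \rightarrowtail X_0 \twoheadrightarrow M$ and $W_M \rightarrowtail X_M \twoheadrightarrow M$ via the pullback $P$ of $X_0 \twoheadrightarrow M \twoheadleftarrow X_M$. Since $\X$ is closed under extensions, $P \in \X$ from the sequence $K \rightarrowtail P \twoheadrightarrow X_M$. The other sequence is $W_M \rightarrowtail P \twoheadrightarrow X_0$, and $\Ext^1(X_0,W_M)=0$, so it splits. Hence $W_M$ is a direct summand of $P$, so $W_M \in \X$. Then $W_M \in \X \cap \omega^\wedge$, and this intersection equals $\omega$: the $\omega$-resolution of $W_M$ splits step by step by the Ext vanishing, and $\omega$ is closed under summands by {\sf (FP2)}. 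Consequently $\Ext^1(M,W_M) = 0$ by dimension shifting from $\id_{\X}(\omega)=0$ along a finite $\X$-resolution of $M$. So $W_M \rightarrowtail X_M \twoheadrightarrow M$ splits, and $M \in \X$ by closure under summands in {\sf (FP1)}. \textbf{This converse is the main obstacle}, because it is where the Frobenius axioms are genuinely used.

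Part (1) then follows by induction on $n$. If $\resdim_{\X}(M) \le n$ and $K_n$ is the $n$-th syzygy of an arbitrary $\X$-resolution, then every syzygy $K_i$ lies in $\X^\wedge$. This uses that $\X^\wedge$ is closed under epikernels with middle term in $\X$, which in turn follows from the existence of the sequences $W_M \rightarrowtail X_M \twoheadrightarrow M$ and a pullback argument as above. I then argue downward: $M$ has a minimal resolution ending in $\X$ at stage $n$, and I compare it with the arbitrary one via Schanuel-type pullbacks. Concretely, the argument reduces to showing $K_1 \in \X^\wedge_{n-1}$ and applying the induction hypothesis. For this reduction, I take the pullback of $X_0 \twoheadrightarrow M$ with a chosen $X'_0 \twoheadrightarrow M$ whose kernel is in $\X^\wedge_{n-1}$, and use the lemma above together with extension closure of $\X^\wedge_{n-1}$. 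The converse direction of (1) is trivial.

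For (2), closure under direct summands and epikernels of $\X^\wedge_m$ reduce via stability (1) to the same properties of $\X$. For extensions, take $A \rightarrowtail B \twoheadrightarrow C$ with $A, C \in \X^\wedge_m$ and resolve with the horseshoe lemma. The middle terms lie in $\X$ by extension closure, and the $m$-th syzygies form an exact sequence whose end terms are in $\X$ by (1), so the middle syzygy is in $\X$. For summands, given $M\oplus N \in \X^\wedge_m$, I add resolutions of $M$ and $N$ (after showing $M\in\X^\wedge$ via the sequence above); the $m$-th syzygy of the sum is then in $\X$, and its summands are too. For epikernels $A \rightarrowtail B \twoheadrightarrow C$ with $B, C\in\X^\wedge_m$, I resolve and use the Proposition \ref{prop:stability_in_ses} style counting to get $A\in\X^\wedge_m$.
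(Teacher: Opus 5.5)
There is a genuine gap, at exactly the step you marked as the main obstacle: the ``stability lemma'' you state is false. From $K \rightarrowtail X_0 \twoheadrightarrow M$ with $K, X_0 \in \X$ and $M \in \X^\wedge$, one only gets $\resdim_{\X}(M) \leq 1$, not $M \in \X$. Indeed, the lemma would force $\X^\wedge_1 = \X$ for every Frobenius pair. A concrete counterexample uses the Frobenius pair $({}_{\mathbb{Z}}\mathcal{P},{}_{\mathbb{Z}}\mathcal{P})$ (the Gorenstein projective pair over $\mathbb{Z}$) and the sequence $\mathbb{Z} \stackrel{2}{\rightarrowtail} \mathbb{Z} \twoheadrightarrow \mathbb{Z}/2$.

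Everything up to $W_M \in \X \cap \omega^\wedge = \omega$ is correct. The next claim fails: $\Ext^1_{\C}(M,W_M) = 0$ does not follow ``by dimension shifting along a finite $\X$-resolution of $M$''. Shifting in the first variable along an $\X$-resolution of length $n$ only kills $\Ext^i_{\C}(M,\omega)$ for $i > n$; here, using $K \in \X$, that means only $i \geq 2$. And indeed $\Ext^1_{\mathbb{Z}}(\mathbb{Z}/2,\mathbb{Z}) \neq 0$.

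Since your argument for (1) rests on this lemma, (1) is not established. There are further structural problems. Your induction for (1) uses closure of $\X^\wedge_{n-1}$ under extensions and epikernels, which is part (2), while your proof of (2) uses (1). Also, the horseshoe lemma is not available for $\X$-resolutions in an abelian category without enough projectives. To lift extensions you would need approximations with kernels in $\omega^\wedge$ together with $\Ext^2_{\C}(\X,\omega^\wedge)=0$.

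The missing idea is the Auslander--Buchweitz Ext-criterion: for $M \in \X^\wedge$ one has $\resdim_{\X}(M) = \pd_{\omega}(M)$ \cite[Thm. 2.10]{BMSP}. In particular, $M \in \X$ if and only if $\Ext^i_{\C}(M,\omega) = 0$ for all $i \geq 1$.

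Your approximation $W_M \rightarrowtail X_M \twoheadrightarrow M$ does prove this criterion, but the splitting comes from shifting in the \emph{second} variable along the finite $\omega$-resolution of $W_M$. That step needs the hypothesis $\Ext^{\geq 1}_{\C}(M,\omega) = 0$, not $K \in \X$.

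Stability then follows as in \cite[Prop. 3.3]{AB89}:
\begin{itemize}
\item if $\resdim_{\X}(M) \leq n$, then $\Ext^{i}_{\C}(M,\omega) = 0$ for $i > n$;
\item dimension shifting along an arbitrary $\X$-resolution gives $\Ext^{\geq 1}_{\C}(K_n,\omega) = 0$;
\item $K_n \in \X^\wedge$ because $\X^\wedge$ is closed under epikernels, so $K_n \in \X$.
\end{itemize}

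For (2), the paper combines thickness of $\X^\wedge$ \cite[Thm. 2.11]{BMSP} with $\resdim_{\X} = \pd_{\omega}$ on $\X^\wedge$. Closure of $\X^\wedge_m$ under extensions, epikernels and direct summands then reduces to a bound on $\pd_{\omega}$ read off from the long exact $\Ext$ sequence, with no resolutions to splice. Note that closure of $\X^\wedge$ itself under direct summands is a nontrivial part of that theorem, which your sketch does not address.
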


\begin{proof}
Part (1) can be found in \cite[Prop. 2.14]{BMS}, and was originally proved in \cite[Prop. 3.3]{AB89} (although stated with a different but equivalent formulation). Part (2), on the other hand, follows by the fact that $\X^\wedge$ is thick (meaning that $\X^\wedge$ is left thick and closed under monocokernels) proved in \cite[Thm. 2.11]{BMSP}, along with the description of $\resdim_{\X}(-)$ given by $\resdim_{\X}(M) = \pd_{\omega}(M)$ for every $M \in \X^\wedge$ (see \cite[Thm. 2.10]{BMSP}).
\end{proof}

Complete Frobenius pairs obtained from hereditary and complete cotorsion pairs will be the main source of model structures in Section \ref{sec:model_structures}. It is then important to present the following ad hoc example.

\begin{example}\label{ex:GP}
Every hereditary and (right) complete cotorsion pair $(\X,\Y)$ in $\mathfrak{C}$ induces a Frobenius pair, namely $$(\X,\X \cap \X^\perp)$$ (see \cite[Prop. 2.5]{LiangYangConstructions}). We shall say that a Frobenius pair $(\X,\omega)$ in $\mathfrak{C}$ is \textbf{induced} if there exists a hereditary and complete cotorsion pair $(\X,\X^\perp)$ such that $\omega = \X \cap \X^\perp$. A counterexample of a Frobenius pair that is not induced by a hereditary and complete cotorsion pair is displayed in \cite[Ex. 2.13]{LiangMaYang-contraejemplo}. Moreover, the Frobenius pair from this reference is also strong (see \cite[Ex. 3.12]{HMP-cut}).
\begin{enumerate}[(1)]
\item It is not known whether the Gorenstein projective Frobenius pair is induced by a hereditary and complete cotorsion pair. This is related to the open question of determining if the Gorenstein projective cotorsion pair $({}_R\mathcal{GP},[{}_R\mathcal{GP}]^{\perp_1})$ is complete. Indeed, in \cite[Coroll. 3.4]{Cortes}, Cort\'es-Izurdiaga and \v{S}aroch proved that $({}_R\mathcal{GP},[{}_R\mathcal{GP}]^{\perp_1})$ is always a hereditary cotorsion pair with $[{}_R\mathcal{GP}]^{\perp_1}$ resolving (that is, closed under extensions, epikernels and contains ${}_R\mathcal{P}$). Moreover, since the pair is hereditary, $[{}_R\mathcal{GP}]^{\perp_1}$ is also coresolving and $[{}_R\mathcal{GP}]^{\perp_1} = [{}_R\mathcal{GP}]^{\perp}$, and so it is thick. 

Let us recall some well known instances of rings $R$ over which ${}_R\mathcal{GP}$ is special precovering (and so the hereditary cotorsion pair $({}_R\mathcal{GP},[{}_R\mathcal{GP}]^{\perp})$ is complete):
\begin{enumerate}[(i)]
\item If $R$ is an $m$-Iwanaga-Gorenstein ring, then $[{}_R\mathcal{GP}]^\perp = {}_R\mathcal{P}^\wedge_m$ and $({}_R\mathcal{GP},[{}_R\mathcal{GP}]^\perp)$ is complete (see for instance \cite[Thm. 8.3]{Hov02}). Here, ${}_R\mathcal{P}^\wedge_m$ denotes the class of left $R$-modules with projective dimension $\leq m$. The containment ${}_R\mathcal{P}^\wedge_m \subseteq [{}_R\mathcal{GP}]^{\perp}$, however, may be strict over an arbitrary ring $R$.

\item Estrada, Iacob and Yeomans proved in \cite[Thm. 1]{EIY17} that every left $R$-module has a special Gorenstein projective precover if ${}_R\mathcal{GP} \subseteq {}_R\mathcal{GF} \subseteq {}_R\mathcal{GP}^\wedge$, where ${}_R\mathcal{GF}$ denotes the class of Gorenstein flat $R$-modules. 

\item Estrada and Iacob also showed in \cite[Thm. 3]{Estrada24} that ${}_R\mathcal{GP}$ is special precovering provided that there exists $n \in \mathbb{Z}_{\geq 0}$ such that every finitely presented left $R$-module has Gorenstein projective dimension $\leq n$. 
\end{enumerate}

\item Let ${}_R\mathcal{F}$, ${}_R\mathcal{C} = [{}_R\mathcal{F}]^{\perp_1}$ and ${}_R\mathcal{GC} = [{}_R\mathcal{GF}]^{\perp_1}$ denote the classes of flat, cotorsion and Gorenstein cotorsion left $R$-modules. It is known that $({}_R\mathcal{F}, {}_R\mathcal{C})$ is a hereditary and perfect cotorsion pair in ${}_R\mathsf{Mod}$ with $R$ an arbitrary ring (see for instance \cite[Prop. 7.4.3 \& Thm. 7.4.4]{EnJen00}). On the other hand, also for an arbitrary ring $R$, in \cite[Coroll. 4.12]{Saroch} it is shown that $({}_R\mathcal{GF},{}_R\mathcal{GC})$ is a hereditary and perfect cotorsion pair in ${}_R\mathsf{Mod}$. Also, it was shown in \cite[Prop. 3.1 (1)]{EstradaMarco} that ${}_R\mathcal{GF} \cap {}_R\mathcal{GC} = {}_R\mathcal{F} \cap {}_R\mathcal{C}$. Hence, $$({}_R\mathcal{F},{}_R\mathcal{F} \cap {}_R\mathcal{C})$$ and $$({}_R\mathcal{GF},{}_R\mathcal{F} \cap {}_R\mathcal{C})$$ are induced Frobenius pairs in ${}_R\mathsf{Mod}$. We shall refer to them as the \textbf{flat Frobenius pair} and the \textbf{Gorenstein flat Frobenius pair}, respectively. They are not strong in general. Consider for instance a commutative von Neumann regular ring $R$ which is not noetherian (and so not quasi-Frobenius). Then, ${}_R\mathcal{F} = {}_R\mathsf{Mod}$, and so one has ${}_R\mathcal{F} \cap {}_R\mathcal{C} = {}_R\mathsf{Mod} \cap {}_R\mathcal{I} = {}_R\mathcal{I}$, where ${}_R\mathcal{I}$ is the class of injective left $R$-modules. Note also that ${}_R\mathcal{F} = {}_R\mathsf{Mod}$ implies ${}_R\mathcal{GF} = {}_R\mathsf{Mod}$. Since ${\rm pd}({}_R\mathcal{I}) > 0$, we have that $({}_R\mathcal{F},{}_R\mathcal{F} \cap {}_R\mathcal{C}) = ({}_R\mathcal{GF},{}_R\mathcal{F} \cap {}_R\mathcal{C}) = ({}_R\mathsf{Mod},{}_R\mathcal{I})$ is not strong.
\end{enumerate}
\end{example}

The following property of Frobenius pairs $(\X,\omega)$ provides a description of $\omega^\wedge_m$. It can be regarded as a ``bounded version'' of \cite[Prop. 2.13]{BMSP}.

\begin{lemma} \label{LemaGorro}
Let $(\X, \omega)$ be a Frobenius pair in $\mathfrak{C}$. Then, the following assertions hold for every $m \in \mathbb{Z}_{\geq 0}$:
\begin{enumerate}[(1)]
\item $\omega^{\wedge}_m$ is closed under direct summands.

\item $\omega^{\wedge}_m =  \X^{\perp} \cap \X^{\wedge}_m = \X^{\perp_1} \cap \X^{\wedge}_m$. \\
\end{enumerate}
\end{lemma}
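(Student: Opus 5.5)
I would prove (2) first and then obtain (1) from it. For (2), the inclusions $\X^{\perp} \cap \X^{\wedge}_m \subseteq \X^{\perp_1} \cap \X^{\wedge}_m$ are trivial, so two things remain: $\omega^\wedge_m \subseteq \X^\perp \cap \X^\wedge_m$, and $\X^{\perp_1} \cap \X^\wedge_m \subseteq \omega^\wedge_m$.

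\textbf{The first inclusion.} Since $\omega \subseteq \X$ by {\sf (FP4)}, any $\omega$-resolution of length $\leq m$ is an $\X$-resolution, so $\omega^\wedge_m \subseteq \X^\wedge_m$. Next, {\sf (FP3)} gives $\omega \subseteq \X^\perp$. The class $\X^\perp$ is closed under cokernels of monomorphisms, because $\X$ is closed under epikernels ({\sf (FP1)}): this is the usual dimension-shifting argument, computing Yoneda Ext via a short exact sequence $K \rightarrowtail X_0 \twoheadrightarrow X$ inside $\X$. Inducting along the short exact sequences of a finite $\omega$-resolution then shows $\omega^\wedge \subseteq \X^\perp$. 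Equivalently, one could cite \cite[Prop. 2.13]{BMSP}, which gives $\omega^\wedge = \X^\perp \cap \X^\wedge$, and intersect with $\X^\wedge_m$.

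\textbf{The second inclusion.} Let $M \in \X^{\perp_1} \cap \X^\wedge_m$. By \cite[Prop. 2.13]{BMSP}, or by \cite[Thm. 3.6]{BMSP}, which says $(\X,\omega^\wedge)$ is a complete cotorsion pair in $\X^\wedge$ so that $\X^{\perp_1}\cap\X^\wedge = \omega^\wedge$, we know $M \in \omega^\wedge$. The task is to bound its $\omega$-resolution dimension by $m$.
\begin{itemize}
\item Since $M \in \X^\wedge_m$, take an $\X$-resolution; stability (Proposition \ref{prop:Frobenius_stable}(1)) shows its $m$-th syzygy is in $\X$.
\item It is more convenient to build an $\omega$-resolution directly. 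Since $M\in\omega^\wedge$, there is a finite $\omega$-resolution $0\to W_k\to\cdots\to W_0\to M$ with $k$ possibly larger than $m$; assume $k>m$, since otherwise $M\in\omega^\wedge_m$ already.
\item This is also an $\X$-resolution, so by stability its $m$-th syzygy $K_m$ lies in $\X$.
\item Each syzygy $K_j$ lies in $\omega^\wedge \subseteq \X^\perp$, by the same closure argument applied to the truncated resolution.
\end{itemize}
Hence $K_m \in \X \cap \X^\perp$. So it suffices to show $\X\cap\X^\perp=\omega$: then $0\to K_m\to W_{m-1}\to\cdots\to W_0\to M$ is an $\omega$-resolution of length $m$.

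\textbf{The key step: $\X\cap\X^\perp=\omega$.} For $X \in \X \cap \X^{\perp_1}$, {\sf (FP4)} gives a sequence $X \rightarrowtail W \twoheadrightarrow X'$ with $X' \in \X$. This sequence splits because $\Ext^1(X',X)=0$. Thus $X$ is a direct summand of $W$, hence in $\omega$ by {\sf (FP2)}. This is the main point of the argument; the rest is bookkeeping. The only hypothesis I actually used on $M$ here is $M\in\X^\perp$, which I get from $M\in\omega^\wedge$. That is consistent with the statement, where $\X^{\perp_1}$ enters only through $\X^{\perp_1}\cap\X^\wedge=\omega^\wedge$.

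\textbf{Part (1).} By (2), $\omega^\wedge_m=\X^\perp\cap\X^\wedge_m$. The class $\X^\perp$ is closed under direct summands since Ext is additive. The class $\X^\wedge_m$ is left thick by Proposition \ref{prop:Frobenius_stable}(2), hence closed under direct summands. The intersection is therefore closed under direct summands, which proves (1).
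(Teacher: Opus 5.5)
Your argument is correct, but it runs in the opposite direction from the paper's.

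\textbf{The paper's route.} It proves (1) first, by hand. For $C^1\oplus C^2\in\omega^\wedge_m$ it takes $\omega$-resolutions of the summands, which lie in $\omega^\wedge=\X^\perp\cap\X^\wedge$, and adds them. Stability then places the $m$-th syzygy $K^1_m\oplus K^2_m$ in $\X\cap\omega^\wedge=\omega$, and {\sf (FP2)} finishes. For (2), it takes the bounded approximation $N\rightarrowtail H\twoheadrightarrow X$ with $H\in\omega^\wedge_m$ and $X\in\X$ from \cite[Thm. 2.8]{BMSP}. This splits when $N\in\X^{\perp_1}$, and (1) concludes.

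\textbf{Your route.} You prove (2) directly. The unbounded identity $\X^{\perp_1}\cap\X^\wedge=\omega^\wedge$ gives a finite $\omega$-resolution of $M$. Stability together with $\X\cap\X^{\perp_1}=\omega$ shows that its $m$-th syzygy already lies in $\omega$, so truncating there suffices. Part (1) then follows at once, because $\X^\perp$ and $\X^\wedge_m$ are both closed under direct summands.

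\textbf{Comparison.} The core mechanism is the same one the paper uses inside its proof of (1): stability forces a syzygy into $\X$, and $\X\cap\X^\perp=\omega$. You apply it to $M$ itself, which removes the need for both the direct-sum construction and \cite[Thm. 2.8]{BMSP}. The cost is that your (1) depends on Proposition \ref{prop:Frobenius_stable} (2), that is, on $\resdim_{\X}=\pd_\omega$ on $\X^\wedge$ from \cite[Thm. 2.10]{BMSP}. There is no circularity, since that proposition does not use this lemma.

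\textbf{Minor point.} Closure of the total orthogonal $\X^\perp$ under cokernels of monomorphisms needs neither {\sf (FP1)} nor dimension shifting inside $\X$. For $A\rightarrowtail B\twoheadrightarrow C$ with $A,B\in\X^\perp$, it is immediate from the long exact sequence $\Ext^i_{\C}(X,B)\to\Ext^i_{\C}(X,C)\to\Ext^{i+1}_{\C}(X,A)$.
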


\begin{proof} ~\
\begin{enumerate}[(1)]
\item Let $C^1 \oplus C^2 = C \in \omega^\wedge_m$. First, note that $\omega^\wedge = \mathcal{X}^\perp \cap \mathcal{X}^\wedge$ by \cite[Prop. 2.13]{BMSP}, and so $\omega^\wedge$ is closed under direct summands since $\mathcal{X}^\perp$ and $\mathcal{X}^\wedge$ are. Indeed, this closure property is clear for the former, while for the latter class one uses \cite[Thm. 2.11]{BMSP}. Thus, $C^1, C^2 \in \omega^\wedge$, and so there are exact sequences
\[
K^1_m \rightarrowtail W^1_{m-1} \to \cdots \to W^1_1 \to W^1_0 \twoheadrightarrow C^1
\]
and
\[
K^2_m \rightarrowtail W^2_{m-1} \to \cdots \to W^2_1 \to W^2_0 \twoheadrightarrow C^2
\]
where $W^i_k \in \omega$ for $i = 1, 2$ and every $0 \leq k \leq m-1$, and $K^i_m \in \omega^\wedge = \mathcal{X}^\wedge \cap \mathcal{X}^\perp$ for $i = 1, 2$. The direct sum of the previous two complexes yields an exact complex
\[
K^1_m \oplus K^2_m \rightarrowtail W^1_{m-1} \oplus W^2_{m-1} \to \cdots \to W^1_1 \oplus W^2_1 \to W^1_0 \oplus W^2_0 \twoheadrightarrow C^1 \oplus C^2
\]
where $W^1_k \oplus W^2_k \in \omega$ since $\omega$ is closed under finite coproducts by \cite[Prop. 2.7 (2)]{BMSP}. By Proposition \ref{prop:Frobenius_stable} (1), we have that $K^1_m \oplus K^2_m \in \mathcal{X}$. Hence, $K^1_m \oplus K^2_m \in \mathcal{X} \cap \omega^\wedge = \omega$. Finally, since $\omega$ is closed under direct summands, we obtain that $K^1_m \in \omega$ and $K^2_m \in \omega$, and hence $C^1, C^2 \in \omega^\wedge_m$.

\item Let $M \in \omega^\wedge_m$. It is clear that $M \in \X^\wedge_m$. On the other hand, since $\id_{\X}(\omega^\wedge_m) = \id_{\mathcal{X}}(\omega) = 0$ (for the first equality, see for instance \cite[Lem. 2.1]{BMSP}), we have that $M \in \mathcal{X}^\perp$. Thus, we have the containment $\omega^\wedge_m \subseteq \X^{\perp} \cap \X^\wedge_m \subseteq \X^{\perp_1} \cap \X^\wedge_m$. Now let $N \in \mathcal{X}^{\perp_1} \cap \mathcal{X}^\wedge_m$. By \cite[Thm. 2.8]{BMSP}, there is a short exact sequence $N \rightarrowtail H \twoheadrightarrow X$ with $X \in \mathcal{X}$ and $H \in \omega^\wedge_m$. Since $\Ext^1_{\mathfrak{C}}(X,N) = 0$, the previous sequence splits, and so $N \in \omega^\wedge_m$ by part (1). 
\end{enumerate}
\end{proof}

Now we are in position to present the main concept of this paper.

\begin{definition} \label{Def-w-complete}
Let $(\mathcal{X},\omega)$ be a Frobenius pair in $\mathfrak{C}$. Given $m \in \mathbb{Z}_{\geq 0}$, we say that $(\mathcal{X},\omega)$ is (\textbf{totally}) $\bm{\omega^{\wedge}_m}$\textbf{-complete in} $\bm{\mathcal{X}^{\perp_1}}$ if for every $M \in \mathcal{X}^{\perp_1}$:
\begin{enumerate}
\item[{\sf (LC)}] there exists a short exact sequence $K' \rightarrowtail L \twoheadrightarrow M$ with $L \in \omega^\wedge_m$ and $K' \in (\omega^\wedge_m)^{\perp_1}$ (resp., $K' \in (\omega^\wedge_m)^\perp$); and

\item[{\sf (RC)}] there exists an exact sequence $M \rightarrowtail K \twoheadrightarrow L'$ with $L' \in \omega^\wedge_m$ and $K \in (\omega^\wedge_m)^{\perp_1}$ (resp., $K \in (\omega^\wedge_m)^\perp$).
\end{enumerate}
In what follows, if $(\X , \omega)$ only satisfies the condition {\sf (LC)} (resp., {\sf (RC)}) we say that $(\mathcal{X},\omega)$ is \textbf{left} (resp., \textbf{right}) (\textbf{totally}) $\bm{\omega^{\wedge}_m}$\textbf{-complete in} $\bm{\mathcal{X}^{\perp_1}}$.
\end{definition}

\begin{example}\label{ExGF} ~\
\begin{enumerate}[(1)]
\item The Gorenstein projective Frobenius pair $({}_R\mathcal{GP},{}_R\mathcal{P})$ is totally $({}_R\mathcal{P})^\wedge_m$-complete for every $m \in \mathbb{Z}_{\geq 0}$. This is a consequence of the fact that $({}_R\mathcal{P}^\wedge_m,[{}_R\mathcal{P}^\wedge_m]^{\perp})$ is a hereditary and complete cotorsion pair in ${}_R\mathsf{Mod}$ (see for instance \cite[Thm. 7.4.6]{EnJen00}).

\item The flat and Gorenstein flat Frobenius pairs $({}_R\mathcal{F},{}_R\mathcal{F} \cap {}_R\mathcal{C})$ and  $({}_R\mathcal{GF},{}_R\mathcal{F} \cap {}_R\mathcal{C})$ are $({}_R\mathcal{F} \cap {}_R\mathcal{C})^\wedge_m$-complete for every $m \in \mathbb{Z}_{\geq 0}$. Let us first recall that if ${}_R\mathcal{F}^{\wedge}_m$ denotes the class of left $R$-modules with flat dimension $\leq m$, then $({}_R\mathcal{F}^{\wedge}_m, [{}_R\mathcal{F}^\wedge_m]^{\perp})$ is a hereditary and perfect cotorsion pair in ${}_R\mathsf{Mod}$ with $R$ an arbitrary ring (see \cite[Thm. 3.4 (2)]{MaoDing} or \cite[Thm. 4.1.3]{GT06}). In particular, for any $M \in {}_R\mathcal{GC} = [{}_R\mathcal{GF}]^{\perp}$ we have an exact sequence $K \rightarrowtail L \twoheadrightarrow M$ where $L \in {}_R\mathcal{F}^{\wedge}_m$ and $K \in [{}_R\mathcal{F}^\wedge_m]^{\perp} \subseteq [({}_R\mathcal{F} \cap {}_R\mathcal{C})^\wedge_m]^{\perp_1}$. On the other hand, since ${}_R\mathcal{F}^{\wedge}_m$ and ${}_R\mathcal{GF}$ both contain ${}_R\mathcal{F}$, we have that $[{}_R\mathcal{F}^{\wedge}_m]^{\perp} \cup {}_R\mathcal{GC} \subseteq {}_R\mathcal{C}$. It then follows that $L \in {}_R\mathcal{F}^\wedge_m \cap {}_R\mathcal{C} = ({}_R\mathcal{F} \cap {}_R\mathcal{C})^\wedge_m$, by Lemma \ref{LemaGorro} (2) applied to the Frobenius pair $({}_R\mathcal{F}, {}_R\mathcal{F} \cap {}_R\mathcal{C})$. Hence, we have {\sf (LC)} in Definition \ref{Def-w-complete}. Regarding {\sf (RC)}, note also the existence of a short exact sequence $M \rightarrowtail K \twoheadrightarrow W$ where $K \in [{}_R\mathcal{F}^\wedge_m]^{\perp} \subseteq [({}_R\mathcal{F} \cap {}_R\mathcal{C})^\wedge_m]^{\perp_1}$ and $W \in {}_R\mathcal{F}^\wedge_m$. Moreover, since $M, K \in {}_R\mathcal{C}$ and ${}_R\mathcal{C}$ is coresolving, we have that $W \in {}_R\mathcal{F}^\wedge_m \cap {}_R\mathcal{C} = ({}_R\mathcal{F} \cap {}_R\mathcal{C})^\wedge_m$. Hence, $({}_R\mathcal{GF},{}_R\mathcal{F} \cap {}_R\mathcal{C})$ is $({}_R\mathcal{F} \cap {}_R\mathcal{C})^\wedge_m$-complete in ${}_R\mathcal{GC}$ for every $m \in \mathbb{Z}_{\geq 0}$, and in a similar way one can note that $({}_R\mathcal{F},{}_R\mathcal{F} \cap {}_R\mathcal{C})$ satisfies both {\sf (LC)} and {\sf (RC)}. 
\end{enumerate}
\end{example}

From \cite[Prop. 2.13]{BMSP}, we can note that within the thick subcategory $\X^\wedge$ the classes $\omega^\wedge$ and $\X^{\perp_1}$ coincide, but in general one has the containments $\omega^\wedge_m \subseteq \omega^\wedge \subseteq \X^{\perp_1}$ which may be strict, as shown in the following remark.

\begin{remark}\label{rmk:strict_perp}
By Lemma \ref{LemaGorro} (2), condition {\sf (LC)} is trivially satisfied by Frobenius pairs $(\mathcal{X},\omega)$ for which $\mathcal{X}^{\perp_1} \subseteq \mathcal{X}^\wedge_m$ for some $m \in \mathbb{Z}_{\geq 0}$. There are many examples of Frobenius pairs $(\mathcal{X},\omega)$ for which the containment $\mathcal{X}^{\perp_1} \subseteq \mathcal{X}^\wedge_m$ does not occur for any $m \in \mathbb{Z}_{\geq 0}$. Indeed, it suffices to consider hereditary and complete cotorsion pairs $(\mathcal{X},\mathcal{X}^\perp)$ in an abelian category $\mathfrak{C}$ with $\resdim_{\mathcal{X}}(\mathfrak{C}) = \infty$, and its induced Frobenius pair $(\X,\X \cap \X^\perp)$. 
\end{remark}

%%%%%%%%%%%%%%%%%%%%%%%%%%%%%%%%%%%%%
%%%%%%%%%%%%%%%%%%%%%%%%%%%%%%%%%%%%%
%%%%%%%%%%%%%%%%%%%%%%%%%%%%%%%%%%%%%
%%%%%%%%%%%%%%%%%%%%%%%%%%%%%%%%%%%%%

\section{Induced cotorsion pairs from complete Frobenius pairs and resolution dimensions} \label{sec:induced_cotorsion}

The goal of this section is to develop the necessary homological machinery for Frobenius pairs so that they can induce cotorsion pairs. We relate the notions of (left and right) $\omega^{\wedge}_m$-complete Frobenius pairs, (cut) cotorsion pairs and resolution dimensions. More precisely, if $(\X,\omega)$ is Frobenius pair $\omega^{\wedge}_m$-complete in $\X^{\perp_1}$, then $(\X^\wedge_m, [\omega^{\wedge}_m]^{\perp_1} \cap \X^{\perp_1})$ is a cotorsion pair cut along $\X^{\wedge}$ in the sense of \cite{HMP-cut}. In particular, if $(\X,\omega)$ is induced by a hereditary and complete cotorsion pair $(\X,\X^\perp)$, then its $\omega^\wedge_m$-completeness is equivalent to the assertion that $\mathcal{X}^\wedge_m$ is the left half of a hereditary and complete cotorsion pair. This is the first main result of this paper, and will be formally stated and proved in Theorem \ref{Family} below. To that end, we need the following two lemmas. The first one follows the ideas of the implications (1) $\Rightarrow$ (2) in \cite[Thms. 2.5. \& 3.3]{Chains}.

\begin{lemma} \label{w-m-exact}
Let $(\mathcal{X},\omega)$ be a Frobenius pair in $\mathfrak{C}$ and $m \in \mathbb{Z}_{\geq 0}$. Then for each $M \in \X^{\wedge}_m$ there is an exact sequence $K \rightarrowtail X \twoheadrightarrow M$ with $X \in \X$ and $K \in \omega^{\wedge}_{m-1}$, which is $\Hom_{\C}(-, [\omega^{\wedge}_m]^{\perp _1})$-exact, that is, for every $E \in [\omega^{\wedge}_m]^{\perp _1}$ the induced $\mathbb{Z}$-homomorphism $\Hom_{\mathfrak{C}}(X,E) \to \Hom_{\mathfrak{C}}(K,E)$ is epic.
\end{lemma}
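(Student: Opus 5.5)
We will obtain the sequence from the Auslander--Buchweitz approximation \cite[Thm. 2.8]{BMSP}: every $M \in \X^\wedge_m$ admits a short exact sequence $K \rightarrowtail X \twoheadrightarrow M$ with $X \in \X$ and $K \in \omega^\wedge_{m-1}$. For $m = 0$ we read $\omega^\wedge_{-1} = 0$; since then $M \in \X$, we may take $X = M$ and $K = 0$, and the $\Hom$-exactness is trivial. So we assume $m \geq 1$ and fix this sequence.

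The remaining point is that $\Hom_{\C}(X,E) \to \Hom_{\C}(K,E)$ is epic for every $E \in [\omega^\wedge_m]^{\perp_1}$. Applying $\Hom_{\C}(-,E)$ to the sequence gives the long exact sequence
\[
\Hom_{\C}(X,E) \to \Hom_{\C}(K,E) \to \Ext^1_{\C}(M,E) \to \Ext^1_{\C}(X,E).
\]
Hence epicness is equivalent to the connecting map $\Ext^1_{\C}(M,E) \to \Ext^1_{\C}(X,E)$ being a monomorphism. However, $M$ need not lie in $\omega^\wedge_m$, so we cannot conclude directly from $\Ext^1_{\C}(M,E) = 0$.

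Instead, we factor the map $K \to E$ through a better object. Since $K \in \omega^\wedge_{m-1}$, there is a short exact sequence $K' \rightarrowtail W \twoheadrightarrow K$ with $W \in \omega$ and $K' \in \omega^\wedge_{m-2}$. This is not quite what we need, so we use the Frobenius structure differently. By {\sf (FP4)}, embed $X \rightarrowtail W_0 \twoheadrightarrow X'$ with $W_0 \in \omega$ and $X' \in \X$, and form the pushout of $K \rightarrowtail X$ along $X \rightarrowtail W_0$. Concretely, the composite $K \rightarrowtail X \rightarrowtail W_0$ is a monomorphism with cokernel $C$, and $C$ fits into a short exact sequence $M \rightarrowtail C \twoheadrightarrow X'$. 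It is cleaner, though, to argue directly: the cokernel of $K \rightarrowtail W_0$ lies in $\omega^\wedge_m$, because $K \in \omega^\wedge_{m-1}$ and $W_0 \in \omega$. (Dimension shifting: a short exact sequence $K \rightarrowtail W_0 \twoheadrightarrow C$ with $K \in \omega^\wedge_{m-1}$ gives $C \in \omega^\wedge_m$, by splicing an $\omega$-resolution of $K$ of length $m-1$ with $W_0$.) Therefore $\Ext^1_{\C}(C,E) = 0$, so every morphism $K \to E$ extends to $W_0$. Restricting this extension along $X \rightarrowtail W_0$ gives a morphism $X \to E$ extending the original one, which proves the surjectivity.

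The main step to check carefully is that $C \in \omega^\wedge_m$. This is immediate: take an $\omega$-resolution $0 \to W_{m} \to \cdots \to W_1 \to K \to 0$ of length $m-1$ and splice it with $K \rightarrowtail W_0 \twoheadrightarrow C$. The result is an $\omega$-resolution of $C$ of length $m$. So the whole argument reduces to \cite[Thm. 2.8]{BMSP}, {\sf (FP4)}, and this splicing. The pushout description via $X'$ is not needed.
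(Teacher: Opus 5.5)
Your proposal is correct and is essentially the paper's proof. The paper also takes the sequence from \cite[Thm. 2.8]{BMSP}, embeds $X \rightarrowtail W$ via {\sf (FP4)}, and uses that the pushout $Q$ (which is exactly your cokernel $C$ of $K \rightarrowtail W_0$) lies in $\omega^\wedge_m$, so that $\Hom_{\C}(W,E) \to \Hom_{\C}(K,E)$ is epic and factors through $\Hom_{\C}(X,E)$; your splicing argument for $C \in \omega^\wedge_m$ and your separate treatment of $m=0$ just make explicit what the paper leaves implicit.
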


\begin{proof}
Since $(\mathcal{X},\omega)$ is a Frobenius pair, by \cite[Thm. 2.8]{BMSP}  there is a short exact sequence $K \rightarrowtail X \twoheadrightarrow M$ with $X \in \mathcal{X}$ and $K \in \omega^\wedge_{m-1}$. On the other hand, there is a short exact sequence $X \rightarrowtail W \twoheadrightarrow X'$ with $W \in \omega$ and $X' \in \mathcal{X}$. Taking the pushout of $W \leftarrowtail X \twoheadrightarrow M$ yields the following commutative diagram with exact rows and columns: 
\[
\begin{tikzpicture}[description/.style={fill=white,inner sep=2pt}] 
\matrix (m) [matrix of math nodes, row sep=2.3em, column sep=2.3em, text height=1.25ex, text depth=0.25ex] 
{ 
K & X & M \\
K & W & Q \\
{} & X' & X' \\
}; 
\path[->] 
(m-1-2)-- node[pos=0.5] {\footnotesize$\mbox{\bf po}$} (m-2-3) 
;
\path[>->]
(m-1-1) edge (m-1-2)
(m-2-1) edge (m-2-2)
(m-1-2) edge (m-2-2)
(m-1-3) edge (m-2-3)
;
\path[->>]
(m-1-2) edge (m-1-3)
(m-2-2) edge (m-2-3)
(m-2-2) edge (m-3-2)
(m-2-3) edge (m-3-3)
;
\path[-,font=\scriptsize]
(m-1-1) edge [double, thick, double distance=2pt] (m-2-1)
(m-3-2) edge [double, thick, double distance=2pt] (m-3-3)
;
\end{tikzpicture}
\]
Now for $E \in [\omega^\wedge_m]^{\perp_1}$, we have the following commutative diagram with exact rows:
\[
\begin{tikzpicture}[description/.style={fill=white,inner sep=2pt}] 
\matrix (m) [matrix of math nodes, row sep=2.3em, column sep=2.3em, text height=1.25ex, text depth=0.25ex] 
{ 
\Hom_{\mathfrak{C}}(Q,E) & \Hom_{\mathfrak{C}}(W,E) & \Hom_{\mathfrak{C}}(K,E) & \Ext^1_{\mathfrak{C}}(Q,E) & {} \\
\Hom_{\mathfrak{C}}(M,E) & \Hom_{\mathfrak{C}}(X,E) & \Hom_{\mathfrak{C}}(K,E) & {} & {} \\
}; 
\path[>->]
(m-1-1) edge (m-1-2)
(m-2-1) edge (m-2-2)
;
\path[->]
(m-1-2) edge (m-1-3) edge (m-2-2)
(m-1-1) edge (m-2-1)
(m-2-2) edge (m-2-3)
(m-1-3) edge (m-1-4)
;
\path[-,font=\scriptsize]
(m-1-3) edge [double, thick, double distance=2pt] (m-2-3)
;
\end{tikzpicture}
\]
where $\Ext^1_{\mathfrak{C}}(Q,E) = 0$ since $Q \in \omega^\wedge_m$. The arrow $\Hom_{\mathfrak{C}}(X,E) \to \Hom_{\mathfrak{C}}(K,E)$ is then an epimorphism. 
\end{proof}

Motivated by \cite[Thm. 3.3]{Chains}, we have the following result.

\begin{lemma} \label{Lema01}
Let $(\mathcal{X},\omega)$ be a Frobenius pair in $\mathfrak{C}$ and $m \in \mathbb{Z}_{\geq 0}$. Consider the following assertions:
\begin{enumerate}[(a)]
\item $M \in \mathcal{X}^\wedge_m$.

\item $\Ext^1_{\mathfrak{C}}(M,E) = 0$ for every $E \in [\omega^\wedge_m]^{\perp_1} \cap \mathcal{X}^{\perp_1}$.
\end{enumerate} 
Then, the implication (a) $\Rightarrow$ (b) holds. Moreover, if $(\mathcal{X},\omega)$ is left $\omega^\wedge_m$-complete in $\mathcal{X}^{\perp_1}$, then (b) $\Rightarrow$ (a) holds in the case where $M \in \mathcal{X}^\wedge$. \\
\end{lemma}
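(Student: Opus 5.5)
For (a) $\Rightarrow$ (b), I will use Lemma \ref{w-m-exact}. Let $M \in \X^\wedge_m$ and $E \in [\omega^\wedge_m]^{\perp_1} \cap \X^{\perp_1}$. The lemma gives a short exact sequence $K \rightarrowtail X \twoheadrightarrow M$ with $X \in \X$ and $K \in \omega^\wedge_{m-1}$, and it is $\Hom_{\C}(-,E)$-exact. Applying $\Hom_{\C}(-,E)$ yields the exact segment
\[
\Hom_{\C}(X,E) \to \Hom_{\C}(K,E) \to \Ext^1_{\C}(M,E) \to \Ext^1_{\C}(X,E).
\]
The last group vanishes because $E \in \X^{\perp_1}$. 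The first map is epic by Lemma \ref{w-m-exact}, so the connecting map is zero. Hence $\Ext^1_{\C}(M,E)=0$. (For $m=0$ one has $M \in \X$ and the claim is immediate.)

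For (b) $\Rightarrow$ (a), assume left $\omega^\wedge_m$-completeness and let $M \in \X^\wedge$ satisfy (b). The plan has three steps.

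\begin{enumerate}[(1)]
\item Since $(\X,\omega^\wedge)$ is a complete cotorsion pair in $\X^\wedge$ (\cite[Thm. 2.8]{BMSP}), there is a short exact sequence $M \rightarrowtail H \twoheadrightarrow X$ with $H \in \omega^\wedge$ and $X \in \X$. Here $H \in \X^{\perp} \subseteq \X^{\perp_1}$.

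\item Apply {\sf (LC)} to $H$. This gives $K' \rightarrowtail L \twoheadrightarrow H$ with $L \in \omega^\wedge_m$ and $K' \in [\omega^\wedge_m]^{\perp_1}$.

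\item Pull this sequence back along $M \rightarrowtail H$. This gives $K' \rightarrowtail P \twoheadrightarrow M$ together with $P \rightarrowtail L \twoheadrightarrow X$.
\end{enumerate}

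I want to show $K' \in \X^{\perp_1}$. By thickness of $\X^\wedge$, $K' \in \X^\wedge$, since it is the kernel of an epimorphism between objects of $\X^\wedge$. Also $L, H \in \X^\perp$. Given $Y \in \X$, the long exact sequence gives
\[
\Hom(Y,H) \to \Ext^1(Y,K') \to \Ext^1(Y,L)=0,
\]
so $\Ext^1(Y,K')$ is a quotient of $\Hom(Y,H)$, and this need not vanish. So I will argue differently. Both $L$ and $H$ lie in $\omega^\wedge$, and I want to conclude $K' \in \omega^\wedge$. Using $\resdim_{\X}=\pd_\omega$ on $\X^\wedge$ and the cotorsion pair $(\X,\omega^\wedge)$ in $\X^\wedge$, it should suffice to show $\Ext^{i}(Y,K')=0$ for $i \ge 1$ and $Y\in\X$.

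To get this, I will first replace $Y$ by a cosyzygy: take $Y \rightarrowtail W \twoheadrightarrow Y'$ with $W \in \omega$ and $Y' \in \X$ from {\sf (FP4)}. Dimension shifting then gives $\Ext^1(Y,K') \cong \Ext^2(Y',K')$, using $\Ext^1(W,K')=0$. This last vanishing holds because $W \in \omega \subseteq \omega^\wedge_m$ and $K' \in [\omega^\wedge_m]^{\perp_1}$. Next, $\Ext^2(Y',K')$ sits between $\Ext^1(Y',H)=0$ and $\Ext^2(Y',L)=0$, so it vanishes. Higher degrees follow the same way, using $\X^\perp$ for $L$ and $H$. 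Thus $K' \in \X^{\perp_1}\cap[\omega^\wedge_m]^{\perp_1}$.

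By (b), $\Ext^1(M,K')=0$, so $K' \rightarrowtail P \twoheadrightarrow M$ splits and $M$ is a direct summand of $P$. From $P \rightarrowtail L \twoheadrightarrow X$ with $L \in \X^\wedge_m$ and $X \in \X$, I get $P \in \X^\wedge_m$ by stability, via Proposition \ref{prop:stability_in_ses} or the left thickness of $\X^\wedge_m$ in Proposition \ref{prop:Frobenius_stable}(2) applied to an epikernel. Finally $M \in \X^\wedge_m$ because $\X^\wedge_m$ is closed under direct summands, again by Proposition \ref{prop:Frobenius_stable}(2).

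The main obstacle is showing $K' \in \X^{\perp_1}$ via the dimension shift above, together with the step $P \in \X^\wedge_m$. If the latter fails for $P$ as an epikernel, I would instead build $P$ through a horseshoe-type argument.
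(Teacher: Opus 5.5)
Your proof is correct. For (a) $\Rightarrow$ (b) it is the paper's argument. For (b) $\Rightarrow$ (a) it shares the paper's opening: the sequence $M \rightarrowtail H \twoheadrightarrow X$, {\sf (LC)} applied to $H$, and the dimension shift showing $K' \in \X^{\perp_1}$. One small correction there: from $Y \rightarrowtail W \twoheadrightarrow Y'$ you only get a monomorphism $\Ext^1(Y,K') \to \Ext^2(Y',K')$, not an isomorphism, but a monomorphism is all you need.

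After that the routes diverge. The paper proceeds as follows:
\begin{enumerate}[(i)]
\item It uses (b) and $\Ext^1(X,E)=0$ to get $\Ext^1(H,E)=0$ for all $E \in [\omega^\wedge_m]^{\perp_1}\cap\X^{\perp_1}$.
\item It splits $K' \rightarrowtail L \twoheadrightarrow H$, so $H \in \omega^\wedge_m$ by Lemma~\ref{LemaGorro}~(1).
\item It pulls back a step $H' \rightarrowtail W_0 \twoheadrightarrow H$ of an $\omega$-resolution along $M \rightarrowtail H$. Left thickness of $\X$ then gives $H' \rightarrowtail X_0 \twoheadrightarrow M$ with $X_0 \in \X$ and $H' \in \omega^\wedge_{m-1}$.
\end{enumerate}
You instead pull back the {\sf (LC)} sequence itself along $M \rightarrowtail H$. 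You split $K' \rightarrowtail P \twoheadrightarrow M$ directly using (b). You then finish from $P \rightarrowtail L \twoheadrightarrow X$, using that $\X^\wedge_m$ is closed under epikernels and direct summands (Proposition~\ref{prop:Frobenius_stable}~(2)).

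Your route is shorter and bypasses Lemma~\ref{LemaGorro}~(1). In exchange it leans on the left thickness of $\X^\wedge_m$, which rests on $\resdim_{\X} = \pd_{\omega}$ in $\X^\wedge$. The paper's route gives extra information: the $\omega^\wedge$-envelope $H$ of $M$ already lies in $\omega^\wedge_m$, and you get an explicit approximation $H' \rightarrowtail X_0 \twoheadrightarrow M$.

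Three minor remarks:
\begin{itemize}
\item Proposition~\ref{prop:stability_in_ses} does not apply to $P \rightarrowtail L \twoheadrightarrow X$, since $L$ need not lie in $\X$.
\item The left-thickness alternative you also mention does work, so no horseshoe fallback is needed.
\item Your detour toward $K' \in \omega^\wedge$ is unnecessary; only $K' \in \X^{\perp_1}\cap[\omega^\wedge_m]^{\perp_1}$ is used.
\end{itemize}
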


\begin{proof} 
The implication (a) $\Rightarrow$ (b) is straightforward from Lemma \ref{w-m-exact}, so we focus on showing the converse implication under the additional assumptions that $(\mathcal{X},\omega)$ is left $\omega^\wedge_m$-complete in $\mathcal{X}^{\perp_1}$ and that $M$ is an object in $\mathcal{X}^\wedge$ satisfying $\Ext^1_{\mathfrak{C}}(M,E) = 0$ for every $E \in [\omega^\wedge_m]^{\perp_1} \cap \mathcal{X}^{\perp_1}$. By \cite[Thm. 2.8]{BMSP}, there exists a short exact sequence $M \rightarrowtail H \twoheadrightarrow X$ with $H \in \omega^\wedge$ and $X \in \mathcal{X}$. Since $\id_{\mathcal{X}}(\omega^\wedge) = \id_{\mathcal{X}}(\omega) = 0$, we have that $H \in \mathcal{X}^{\perp}$ and $X \in {}^\perp[\omega^\wedge]$. Moreover, for any $E \in [\omega^\wedge_m]^{\perp_1} \cap \mathcal{X}^{\perp_1}$ we have an exact sequence
\[
0 = \Ext^1_{\mathfrak{C}}(X,E) \to \Ext^1_{\mathfrak{C}}(H,E) \to \Ext^1_{\mathfrak{C}}(M,E) = 0.
\]
Then, $\Ext^1_{\mathfrak{C}}(H,E) = 0$. On the other hand, we can assert that $H \in \omega^\wedge_m$. Indeed, since $(\mathcal{X},\omega)$ is left $\omega^\wedge_m$-complete in $\mathcal{X}^{\perp_1}$ and $H \in \mathcal{X}^{\perp_1}$, there exists a short exact sequence $K \rightarrowtail L \twoheadrightarrow H$ with $L \in \omega^\wedge_m \subseteq \mathcal{X}^{\perp}$ (since $\id_{\mathcal{X}}(\omega^\wedge) = 0$) and $K \in [\omega^\wedge_m]^{\perp_1}$. Now for $X' \in \mathcal{X}$, we have an exact sequence
\[
0 = \Ext^1_{\mathfrak{C}}(X',H) \to \Ext^2_{\mathfrak{C}}(X',K) \to \Ext^2_{\mathfrak{C}}(X',L) = 0.
\]
Then, $\Ext^2_{\mathfrak{C}}(X',K) = 0$, that is, $K \in \mathcal{X}^{\perp_2}$. On the other hand, for $X' \in \mathcal{X}$ consider a short exact sequence $X' \rightarrowtail W \twoheadrightarrow X''$ with $W \in \omega$ and $X'' \in \mathcal{X}$, from which we obtain an exact sequence
\[
0 = \Ext^1_{\mathfrak{C}}(W,K) \to \Ext^1_{\mathfrak{C}}(X',K) \to \Ext^2_{\mathfrak{C}}(X'',K) = 0.
\]
Hence, $\Ext^1_{\mathfrak{C}}(X',K) = 0$ for every $X' \in \mathcal{X}$, and so $K \in \mathcal{X}^{\perp_1}$. We thus have that $K \in [\omega^\wedge_m]^{\perp_1} \cap \mathcal{X}^{\perp_1}$. Since $\Ext^1_{\mathfrak{C}}(H,K) = 0$, the sequence $K \rightarrowtail L \twoheadrightarrow H$ splits, and so $H$ is a direct summand of $L \in \omega^\wedge_m$, which in turn implies that $H \in \omega^\wedge_m$ by Lemma \ref{LemaGorro} (1). Then, there is an exact sequence $H' \rightarrowtail W_0 \twoheadrightarrow H$ where $W_0 \in \omega$ and $H' \in \omega^\wedge_{m-1}$. Taking the pullback of $M \rightarrowtail H \twoheadleftarrow W_0$ yields the following commutative diagram with exact rows and columns:
\[
\begin{tikzpicture}[description/.style={fill=white,inner sep=2pt}] 
\matrix (m) [matrix of math nodes, row sep=2.3em, column sep=2.3em, text height=1.25ex, text depth=0.25ex] 
{ 
H' & H' & {} \\
X_0 & W_0 & X \\
M & H & X \\
}; 
\path[->] 
(m-2-1)-- node[pos=0.5] {\footnotesize$\mbox{\bf pb}$} (m-3-2) 
;
\path[>->]
(m-1-1) edge (m-2-1)
(m-1-2) edge (m-2-2)
(m-2-1) edge (m-2-2)
(m-3-1) edge (m-3-2)
;
\path[->>]
(m-2-2) edge (m-2-3)
(m-3-2) edge (m-3-3)
(m-2-2) edge (m-3-2)
(m-2-1) edge (m-3-1)
;
\path[-,font=\scriptsize]
(m-1-1) edge [double, thick, double distance=2pt] (m-1-2)
(m-2-3) edge [double, thick, double distance=2pt] (m-3-3)
;
\end{tikzpicture}
\]
Now since $\mathcal{X}$ is left thick and $W_0, X \in \mathcal{X}$, we have that $X_0 \in \mathcal{X}$. Also, $H' \in \omega^\wedge_{m-1} \subseteq \mathcal{X}^\wedge_{m-1}$, and hence $M \in \mathcal{X}^\wedge_m$. 
\end{proof}

It is possible to determine another case in which the equivalence between (a) and (b) in the previous lemma holds.

\begin{remark} \label{RK-H-C}
Let $(\mathcal{X},\mathcal{X}^\perp)$ be a hereditary and complete cotorsion pair in $\mathfrak{C}$, and consider its induced Frobenius pair $(\mathcal{X},\omega)$. If this pair is left $\omega^{\wedge}_m$-complete in $\X^{\perp}$, then the implication (b) $\Rightarrow$ (a) in Lemma \ref{Lema01} holds without assuming that $M \in \X^{\wedge}$.

Indeed, let $M \in \mathfrak{C}$ be such that $\Ext^1_{\mathfrak{C}}(M,E) = 0$ for every $E \in [\omega^\wedge_m]^{\perp_1} \cap \mathcal{X}^{\perp}$. Since $(\mathcal{X},\mathcal{X}^\perp)$ is right complete, there is an exact sequence $ M \rightarrowtail H \twoheadrightarrow X$  with $H \in \X ^{\perp}$ and $X \in \X$. From this point on, one just needs to mimic the arguments given in the proof of (b) $\Rightarrow$ (a).
\end{remark}

Next we point out some outcomes on cotorsion pairs in exact categories induced from a cotorsion pair in an abelian category. The following lemma is an adaptation of \cite[Thm. 5.1]{Chains} to the purposes of our paper.

\begin{lemma} \label{RK-H-C2}
Let $\X, \Y \subseteq \mathfrak{C}$ be classes such that $(\Y, \Y ^{\perp_1})$ is a cotorsion pair with $\Y \subseteq \X ^{\wedge}$ and such that every object in $\X^{\wedge}$ has a special $\Y$-precover. Then, for any left thick class $\mathcal{B}$ such that $\Y \subseteq \mathcal{B} \subseteq \X^{\wedge}$, the following assertion holds:
\begin{enumerate}[(1)]
\item $\Y = {}^{\perp_1}[\Y^{\perp _1} \cap \B] \cap \B$. In other words, $(\Y, \Y^{\perp _1} \cap \B)$ is a cotorsion pair in the (weakly idempotent complete) exact subcategory $\mathcal{B}$.
\end{enumerate}
If in addition every object in $\X^{\wedge}$ has a special $\Y ^{\perp _1}$-preenvelope, then:
\begin{enumerate}[(1)]
\setcounter{enumi}{1}
\item The cotorsion pair $(\Y, \Y^{\perp _1} \cap \B)$ in $\mathcal{B}$ is complete.

\item If $(\Y, \Y^{\perp_1})$ is hereditary, then so is $(\Y, \Y^{\perp_1} \cap \B)$ (as a cotorsion pair in $\mathcal{B}$). \\
\end{enumerate}
\end{lemma}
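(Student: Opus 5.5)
Throughout, cotorsion pairs in the exact subcategory $\B$ are taken with respect to $\Ext^1_{\B}$. Since $\B$ is left thick, it is closed under extensions, so $\Ext^1_{\B}(B,B') = \Ext^1_{\C}(B,B')$ for $B,B' \in \B$. The plan is to use the special $\Y$-precovers of objects of $\X^\wedge$ together with the left thickness of $\B$, and to split each approximation sequence inside $\B$.

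\emph{Part (1).} The containment $\Y \subseteq {}^{\perp_1}[\Y^{\perp_1} \cap \B] \cap \B$ is immediate from $\Y \subseteq \B$. For the converse, take $M \in \B$ with $\Ext^1_{\C}(M,E) = 0$ for all $E \in \Y^{\perp_1} \cap \B$. Since $M \in \X^\wedge$, there is a special $\Y$-precover $K \rightarrowtail Y \twoheadrightarrow M$ with $Y \in \Y$ and $K \in \Y^{\perp_1}$. Now $Y \in \Y \subseteq \B$ and $M \in \B$, and $\B$ is closed under epikernels, so $K \in \B$. Hence $K \in \Y^{\perp_1} \cap \B$, so the sequence splits and $M$ is a direct summand of $Y$. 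Finally $\Y = {}^{\perp_1}(\Y^{\perp_1})$ is closed under direct summands, so $M \in \Y$. The identity $\Y^{\perp_1} \cap \B = \Y^{\perp_1}$ computed inside $\B$ is trivial, so this gives a cotorsion pair in $\B$. Weak idempotent completeness of $\B$ holds because $\B$ is closed under direct summands.

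\emph{Part (2).} Left completeness is exactly the sequence used above: for $M \in \B$, the sequence $K \rightarrowtail Y \twoheadrightarrow M$ lies in $\B$. Right completeness is the main obstacle, because a special $\Y^{\perp_1}$-preenvelope $M \rightarrowtail E \twoheadrightarrow Y'$ need not have $E \in \B$ when $\B$ is not closed under extensions by $\Y$-objects on the right. $\B$ is closed under extensions, however, and $M, Y' \in \B$ (as $Y' \in \Y \subseteq \B$), so $E \in \B$. Thus for $M \in \B \subseteq \X^\wedge$ the given preenvelope already lies in $\B$. One should also check that the conflations of $\B$ are exactly the short exact sequences of $\C$ with all terms in $\B$, which is the standard exact structure.

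\emph{Part (3).} Heredity in $\B$ means that $\Y$ is closed under kernels of admissible epis in $\B$, or equivalently $\Ext^i_{\B}(\Y, \Y^{\perp_1} \cap \B) = 0$. The aim is to derive this from $\Y^{\perp_1} = \Y^{\perp}$ in $\C$. The simplest route is the resolving characterization. If $Y_1 \rightarrowtail \cdot \twoheadrightarrow \cdot$ is an epi $Y_1 \twoheadrightarrow Y_2$ in $\Y$ with kernel $N$, then $N \in \B$ by left thickness. Moreover $N \in \Y$, because hereditary in $\C$ with both halves defined via $\Ext^1$ gives $\Y$ closed under epikernels; this uses the dimension-shifting $\Ext^2(Y_2,-)=0$ on $\Y^{\perp_1}$. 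So $\Y$ is resolving in $\B$. The right half $\Y^{\perp_1} \cap \B$ is closed under extensions, and closed under cokernels of admissible monics whenever those cokernels lie in $\B$. Completeness from (2) then gives, by the standard exact-category argument, that either closure condition suffices for heredity.
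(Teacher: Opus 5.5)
Your proof is correct and takes the same route as the paper, which only calls (1)--(3) straightforward from the left thickness of $\mathcal{B}$ and the given approximations. For (3) the paper cites \v{S}\v{t}ov\'i\v{c}ek's characterization of hereditary complete cotorsion pairs in exact categories, which is exactly the criterion you apply once you have shown $\mathcal{Y}$ is closed under epikernels. One small remark: the aside in (2) calling right completeness an obstacle is misleading, since, as you immediately observe, $E$ is an extension of $M$ by $Y' \in \mathcal{Y} \subseteq \mathcal{B}$ and hence already lies in $\mathcal{B}$.
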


Recall that an exact category is \emph{weakly idempotent complete} (\emph{w.i.c.}) if every split monomorphism has a cokernel and every split epimorphism has a kernel. This is equivalent to requiring that inflations (a.k.a. admissible monomorphisms) are closed under retracts (or that deflations / admissible epimorphisms are closed under retracts). See \cite[Prop. 2.4]{GillespieExact} to recall these concepts and results.

\begin{proof}
Note from the assumptions on $\mathcal{B}$ that $\mathcal{B}$ is a w.i.c. exact category. The proofs of assertions (1), (2) and (3) are straightforward. For the latter, it may be useful to consider the characterization of hereditary complete cotorsion pairs in exact categories given in \cite[Lem. 6.17]{Stovi14}. 
\end{proof}

We are in a position to prove our first main result. Firstly, we show that $\omega^{\wedge}_m$-completeness of certain  Frobenius pairs $(\mathcal{X},\omega)$ characterize the existence of hereditary and complete cotorsion pairs which have $\X^{\wedge} _m$ as its left half. Specifically, the latter occurs for Frobenius pairs induced by a hereditary and complete cotorsion pair $(\mathcal{X},\mathcal{X}^{\perp})$. If we drop this assumption, one can at least obtain cut cotorsion pairs. 

Let us recall from \cite[Def. 2.1]{HMP-cut} that if we are given three classes of objects $\mathcal{S}$, $\mathcal{X}$ and $\mathcal{Y}$ in an abelian category $\mathfrak{C}$, then $(\mathcal{X,Y})$ is a \emph{left cotorsion pair cut along} $\mathcal{S}$ if the following conditions are satisfied:
\begin{enumerate}
\item[{\sf (LCCP1)}] $\mathcal{X}$ is closed under direct summands.

\item[{\sf (LCCP2)}] $\mathcal{X} \cap \mathcal{S} = {}^{\perp_1}\mathcal{Y} \cap \mathcal{S}$.

\item[{\sf (LCCP3)}] For every $S \in \mathcal{S}$ there is an exact sequence $Y \rightarrowtail X \twoheadrightarrow S$ with $X \in \mathcal{X}$ and $Y \in \mathcal{Y}$.
\end{enumerate}
Similarly, if $(\mathcal{X,Y})$ and $\mathcal{S}$ satisfy the dual conditions, we say that $(\mathcal{X,Y})$ is a \emph{right cotorsion pair cut along} $\mathcal{S}$. Pairs which are both left and right cotorsion pairs cut along the same class of objects $\mathcal{S}$ will be simply called \emph{cotorsion pairs cut along} $\mathcal{S}$.

We are now in position to show the main result of this section.

\begin{theorem} \label{Family}
Let $(\mathcal{X},\omega)$ be a Frobenius pair in $\mathfrak{C}$ and $m \in \mathbb{Z}_{\geq 0}$. The following assertions hold true:
\begin{enumerate}[(1)]
\item If $(\mathcal{X},\omega)$ is induced by a hereditary and complete cotorsion pair $(\mathcal{X},\mathcal{X}^{\perp})$ in $\mathfrak{C}$, then: 
\begin{enumerate}[(i)]
\item If $(\X, \omega)$ is $\omega^{\wedge}_m$-complete in $\X^{\perp}$, then $(\X^{\wedge}_m, [\omega^{\wedge}_m]^{\perp_1} \cap \X^\perp)$ is a hereditary and complete cotorsion pair in $\C$. Conversely, if $\X^{\wedge}_m$ is the left half of a hereditary and complete cotorsion pair in $\C$, then the pair $(\X,\omega)$ is $\omega^{\wedge}_m$-complete in $\X^{\perp}$.

\item Let $\mathcal{B} \subseteq \mathfrak{C}$ be a left thick class such that $\X^{\wedge}_m \subseteq \mathcal{B} \subseteq \X^{\wedge}$. If $(\X, \omega)$ is $\omega^{\wedge}_m$-complete in $\X^\perp$, then $(\X^{\wedge}_m, [\omega^{\wedge}_m]^{\perp_1} \cap \X^{\perp} \cap \mathcal{B})$ is a hereditary and complete cotorsion pair in the w.i.c. exact category $\mathcal{B}$.
\end{enumerate}

\item If $(\X , \omega)$ is $\omega^{\wedge}_m$-complete in $\X^{\perp_1}$, then $(\X^\wedge_m, [\omega^{\wedge}_m]^{\perp_1} \cap \X^{\perp_1})$ is a cotorsion pair cut along $\X^{\wedge}$. \\
\end{enumerate} 
\end{theorem}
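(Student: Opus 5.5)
I will treat part (2) first, since it isolates the ingredients, and then upgrade it to (1) using the completeness of $(\X,\X^\perp)$. Set $\mathcal{E} := [\omega^\wedge_m]^{\perp_1} \cap \X^{\perp_1}$.

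\textbf{Part (2).} For {\sf (LCCP1)}, $\X^\wedge_m$ is closed under direct summands because it is left thick (Proposition \ref{prop:Frobenius_stable} (2)). For {\sf (LCCP2)}, Lemma \ref{Lema01} gives $\X^\wedge_m \subseteq {}^{\perp_1}\mathcal{E}$ and, under left $\omega^\wedge_m$-completeness, ${}^{\perp_1}\mathcal{E} \cap \X^\wedge \subseteq \X^\wedge_m$.

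For {\sf (LCCP3)}, take $S \in \X^\wedge$ and the sequence $S \rightarrowtail H \twoheadrightarrow X$ from \cite[Thm. 2.8]{BMSP}, with $H \in \omega^\wedge \subseteq \X^{\perp}$. Apply {\sf (LC)} to $H$ to get $K \rightarrowtail L \twoheadrightarrow H$ with $L \in \omega^\wedge_m$ and $K \in [\omega^\wedge_m]^{\perp_1}$; the argument in the proof of Lemma \ref{Lema01} shows $K \in \X^{\perp_1}$, so $K \in \mathcal{E}$. Pulling back along $S \rightarrowtail H$ gives $K \rightarrowtail P \twoheadrightarrow S$ together with $P \rightarrowtail L \twoheadrightarrow X$. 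Since $L \in \X^\wedge_m$, $X \in \X$, and $\X^\wedge_m$ is left thick (hence closed under epikernels), we get $P \in \X^\wedge_m$.

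For the right half, the dual conditions must be checked. Take $S \in \X^\wedge$ with $X' \rightarrowtail X_0 \twoheadrightarrow S$, $X_0 \in \X$, $X' \in \X^\wedge$. It suffices to produce $S \rightarrowtail E \twoheadrightarrow Z$ with $E \in \mathcal{E}$ and $Z \in \X^\wedge_m$. Start from $S \rightarrowtail H \twoheadrightarrow X$ as above and apply {\sf (RC)} to $H$ to get $H \rightarrowtail K \twoheadrightarrow L'$. Then $K \in \X^{\perp_1}$, since $\Ext^1(X'',K)$ sits between $\Ext^1(X'',H)=0$ and $\Ext^1(X'',L')=0$. Composing gives $S \rightarrowtail K$, whose cokernel is an extension of $X$ by $L'$, and so lies in $\X^\wedge_m$ because $\X^\wedge_m$ is closed under extensions. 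This gives the right approximation. Closure of $\mathcal{E}$ under direct summands is clear. For the right analogue of {\sf (LCCP2)}, namely $\mathcal{E}\cap\X^\wedge = (\X^\wedge_m)^{\perp_1}\cap\X^\wedge$: one inclusion holds because $\X \cup \omega^\wedge_m \subseteq \X^\wedge_m$. For the other, given $E \in (\X^\wedge_m)^{\perp_1}\cap\X^\wedge$, the inclusion $\X \cup \omega^\wedge_m \subseteq \X^\wedge_m$ gives directly $E \in [\omega^\wedge_m]^{\perp_1} \cap \X^{\perp_1}$. I will double-check against the precise definition of right cut pairs in \cite{HMP-cut}.

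\textbf{Part (1)(i).} The key point is that $\X^\perp = \X^{\perp_1}$ and completeness lets us drop the restriction to $\X^\wedge$. Remark \ref{RK-H-C} gives $\X^\wedge_m = {}^{\perp_1}\mathcal{E}$. For the other orthogonal, $(\X^\wedge_m)^{\perp_1} \subseteq \mathcal{E}$ as above. Conversely, for $E \in \mathcal{E}$ and $M \in \X^\wedge_m$, use the sequence $K \rightarrowtail X \twoheadrightarrow M$ of Lemma \ref{w-m-exact}: the surjectivity of $\Hom(X,E)\to\Hom(K,E)$ together with $\Ext^1(X,E)=0$ gives $\Ext^1(M,E)=0$. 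Completeness follows by repeating the (LCCP3) and right-approximation constructions for arbitrary $S \in \C$, starting from the special $\X^\perp$-preenvelope $S \rightarrowtail H \twoheadrightarrow X$; here $H \in \X^\perp$ replaces $H \in \omega^\wedge$, and totality of completeness is not needed. Hereditariness reduces, via the standard criterion, to showing that $\X^\wedge_m$ is resolving. It contains projectives when they exist, and in general one uses that $\X^\wedge_m$ is closed under epikernels and extensions and that the pair is complete (\cite[Lem. 6.17]{Stovi14}-type argument).

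For the converse, let $(\X^\wedge_m,\mathcal{Y})$ be hereditary and complete and $M \in \X^\perp$. A special precover $Y \rightarrowtail L \twoheadrightarrow M$ has $L \in \X^\wedge_m \cap \X^\perp$, since $\X^\perp$ is closed under extensions and $Y \in \mathcal{Y} \subseteq \X^\perp$. This class equals $\omega^\wedge_m$ by Lemma \ref{LemaGorro} (2), and $Y \in \mathcal{Y} \subseteq (\omega^\wedge_m)^{\perp}$. The preenvelope $M \rightarrowtail Y \twoheadrightarrow L'$ is handled the same way, using that $\X^\perp$ is coresolving, so that $L' \in \X^\perp$.

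\textbf{Part (1)(ii).} This follows from (i) by Lemma \ref{RK-H-C2} with $\Y = \X^\wedge_m$, since all objects have special approximations.

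The main obstacle is the right-hand side of part (2), and the hereditary property in (1)(i), where the orthogonal of $\X^\wedge_m$ must be handled carefully.
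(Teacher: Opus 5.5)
Your proof is correct, but it differs from the paper's in two places. The first is the left approximations ({\sf (LCCP3)} in (2) and left completeness in (1)(i)). The paper starts from a sequence $Y \rightarrowtail X \twoheadrightarrow M$ with $X \in \X$ and $Y \in \X^\perp$: a special $\X$-precover in (1)(i), or the sequence with $Y \in \omega^\wedge$ from \cite[Thm. 2.8]{BMSP} in (2). It applies {\sf (RC)} to $Y$ and takes a pushout, so $K \in \X^\perp$ comes for free as an extension of $Y$ and $L$. You instead start from $S \rightarrowtail H \twoheadrightarrow X$ with $H \in \X^\perp$, apply {\sf (LC)} to $H$ and pull back. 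Because $\X^\perp$ need not be closed under epikernels, you then have to import the $\Ext^2$/relative-cogenerator argument from the proof of Lemma~\ref{Lema01} to get $K \in \X^{\perp_1}$. That argument only uses $\Ext^1_{\C}(\omega,K)=0$, so it is legitimate here. The payoff is that in (1)(i) all your constructions, like Remark~\ref{RK-H-C}, use only special $\X^\perp$-preenvelopes, so you never need left completeness of $(\X,\X^\perp)$. The paper's route is more symmetric and avoids the extra Ext computation. Your right approximations (compose $S \rightarrowtail H \rightarrowtail K$ and note the cokernel is an extension of $X$ by $L'$) are essentially what the paper's ``analogous'' step amounts to.

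The second difference is hereditariness. The paper shifts dimension directly: $\Ext^{i+1}_{\C}(M,E) \cong \Ext^1_{\C}(M_i,E) = 0$, with $M_i \in \X^\wedge_m$ an $\X$-syzygy, which works because $E \in \X^\perp$. Your appeal to \cite[Lem. 6.17]{Stovi14} is also valid once completeness is established. However, the remark about ``resolving'' and containing projectives is beside the point, since $\C$ need not have enough projectives.

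One slip needs fixing. In the right analogue of {\sf (LCCP2)} you justify both inclusions by $\X \cup \omega^\wedge_m \subseteq \X^\wedge_m$, but that only gives $[\X^\wedge_m]^{\perp_1} \subseteq \mathcal{E}$. The reverse inclusion $\mathcal{E} \subseteq [\X^\wedge_m]^{\perp_1}$ is the implication (a) $\Rightarrow$ (b) of Lemma~\ref{Lema01}, equivalently the Lemma~\ref{w-m-exact} argument you give in (1)(i). You already cite that implication for {\sf (LCCP2)}, and with it your $\mathcal{E}$ equals $[\X^\wedge_m]^{\perp_1}$ for any Frobenius pair.
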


\begin{proof} ~\
\begin{enumerate}[(1)]
\item Let $(\X,\X^\perp)$ be a hereditary and complete cotorsion pair in $\mathfrak{C}$. It suffices to show part (i), as part (ii) follows from (i) and Lemma \ref{RK-H-C2}.
\begin{itemize}
\item \underline{Orthogonality}: $\X^\wedge_m = {}^{\perp_1}([\omega^\wedge_m]^{\perp_1} \cap \mathcal{X}^\perp)$ and $[\X^\wedge_m]^{\perp _1} = [\omega^\wedge_m]^{\perp_1} \cap \X^\perp$ follow from Remark \ref{RK-H-C} and the definitions and properties of orthogonal complements. 

\item \underline{Hereditariness}: We have to show that $\Ext^i_{\C}(M,E) = 0$ for every $M \in \X^\wedge_m$, $E \in [\omega^\wedge_m]^{\perp_1} \cap \mathcal{X}^\perp$ and $i \in \mathbb{Z}_{>0}$. By Proposition \ref{prop:Frobenius_stable} (2), $\X^\wedge_m$ left thick, and so in particular closed under $\X$-syzygies. We have by dimension shifting that $\Ext^{i+1}_{\C}(M,E) \cong \Ext^1_{\C}(M_i,E) = 0$ for every $M \in \X^\wedge_m$, $E \in [\omega^\wedge_m]^{\perp_1} \cap \mathcal{X}^\perp$ and $i \in \mathbb{Z}_{\geq 0}$, where $M_i \in \X^\wedge_m$ is any $i$-th $\X$-syzygy of $M$. 

\item \underline{Left completeness}: For any $M \in \C$, since $(\X,\X^\perp)$ is a left complete cotorsion pair, there is an exact sequence $Y \rightarrowtail X \twoheadrightarrow M$ with $X \in \X$ and $Y \in \X^\perp$. On the other hand, since $(\X,\omega)$ is right $\omega^\wedge_m$-complete in $\X^\perp$, there is an exact sequence $Y \rightarrowtail K \twoheadrightarrow L$ with $K \in [\omega^\wedge_m]^{\perp_1}$ and $L \in \omega^{\wedge}_m$. Taking the pushout of $K \leftarrowtail Y \rightarrowtail X$ yields the following commutative diagram with exact rows and columns: 
\[
\begin{tikzpicture}[description/.style={fill=white,inner sep=2pt}] 
\matrix (m) [matrix of math nodes, row sep=2.3em, column sep=2.3em, text height=1.25ex, text depth=0.25ex] 
{ 
Y & X & M \\ K & Q & M \\ L & L & {} \\
}; 
\path[->] 
(m-1-1)-- node[pos=0.5] {\footnotesize$\mbox{\bf po}$} (m-2-2) 
;
\path[>->]
(m-1-1) edge (m-1-2) edge (m-2-1)
(m-1-2) edge (m-2-2)
(m-2-1) edge (m-2-2)
;
\path[->>]
(m-1-2) edge (m-1-3)
(m-2-2) edge (m-2-3)
(m-2-1) edge (m-3-1)
(m-2-2) edge (m-3-2)
;
\path[-,font=\scriptsize]
(m-1-3) edge [double, thick, double distance=2pt] (m-2-3)
(m-3-1) edge [double, thick, double distance=2pt] (m-3-2)
;
\end{tikzpicture}
\]
Note that since $L \in \omega^\wedge_m$ and $X \in \X$, and $\X^\wedge_m$ is closed under extensions, we have that $Q \in \X^\wedge_m$. Moreover, since $Y \in \X^\perp$ and $L \in \omega^\wedge_m \subseteq \X^\perp$, and $\X^\perp$ is closed under extensions, we have that $K \in [\omega^\wedge_m]^{\perp_1} \cap \X^\perp$. 
   
\item \underline{Right completeness}: Analogous to the previous point, using that $(\X,\X^\perp)$ is right complete. 
\end{itemize}     
Now suppose that $(\X^\wedge_m,[\X^\wedge_m]^\perp)$ is a hereditary and complete cotorsion pair in $\C$, and let $M \in \X^\perp$. Then, there is an exact sequence $M \rightarrowtail K \twoheadrightarrow L'$ with $K \in [\X^\wedge_m]^\perp \subseteq [\omega^\wedge_m]^{\perp_1}$ and $L' \in \X^\wedge_m$. Since $[\X^\wedge_m]^\perp \subseteq \X^\perp$ and $\X^\perp$ is closed under monocokernels, we have that $L' \in \X^\perp \cap \X^\wedge_m = \omega^\wedge_m$ by Lemma \ref{LemaGorro} (2). The existence of an exact sequence $K' \rightarrowtail L \twoheadrightarrow M$ with $L \in \omega^\wedge_m$ and $K' \in [\omega^\wedge_m]^{\perp_1}$ follows similarly. 

\item We have that $\X^\wedge_m$ is closed under direct summands by Proposition \ref{prop:Frobenius_stable} (2), while the same property clearly holds for $[\omega^{\wedge}_m]^{\perp_1} \cap \X^{\perp_1}$. So condition {\sf (LCCP1)} and its dual are satisfied. Moreover, {\sf (LCCP2)} and its dual follow by Lemma \ref{Lema01}. Finally, {\sf (LCCP3)} and its dual follow as left and right completeness in part (1)-(i), using \cite[Thm. 2.8]{BMSP}, instead of the left and right completeness of the cotorsion pair $(\X,\X^\perp)$. 
\end{enumerate}
\end{proof}

The previous theorem, in particular, provides a way to generate hereditary and complete cotorsion pairs of the form $(\X^\wedge_m,[\X^\wedge_m]^{\perp_1})$. Let us apply this in the following example. \\

\begin{example}\label{ex:PGF} ~\
\begin{enumerate}[(1)]
\item Recall from Example \ref{ex:GP} that $({}_R\mathcal{GP},[{}_R\mathcal{GP}]^\perp)$ is always a hereditary cotorsion pair in ${}_R\mathsf{Mod}$. In \cite[Thm. 3.4]{Chains}, the authors show that $({}_R\mathcal{GP}^\wedge_m,[{}_R\mathcal{GP}^\wedge_m]^{\perp_1})$ is a complete cotorsion pair if $R$ is an Artin algebra, extending thus a result by Beligiannis and Reiten that shows that $({}_R\mathcal{GP},[{}_R\mathcal{GP}]^\perp)$ is a complete cotorsion pair over such algebras (see \cite[Thm. X.2.4.]{BeligiannisReiten}). The pair $({}_R\mathcal{GP}^\wedge_m,[{}_R\mathcal{GP}^\wedge_m]^{\perp_1})$ is also hereditary since ${}_R\mathcal{GP}^\wedge_m$ is resolving. It turns out that the assumption that $R$ is an Artin algebra is not relevant in the sense that the same conclusion can be obtained with respect to any ring $R$ over which $({}_R\mathcal{GP},[{}_R\mathcal{GP}]^\perp)$ is complete. Indeed, if the latter holds, we have a Frobenius pair $({}_R\mathcal{GP},{}_R\mathcal{P})$ induced by $({}_R\mathcal{GP},[{}_R\mathcal{GP}]^\perp)$, which is also ${}_R\mathcal{P}^\wedge_m$-complete in $[{}_R\mathcal{GP}]^\perp$ for every $m \in \mathbb{Z}_{\geq 0}$ (by Example \ref{ExGF} (1)). It then follows from Theorem \ref{Family} (1) that the following assertions are equivalent:
\begin{enumerate}[(a)] 
\item $({}_R\mathcal{GP}^\wedge_m,[{}_R\mathcal{GP}^\wedge_m]^{\perp})$ is a hereditary and complete cotorsion pair in ${}_R\mathsf{Mod}$ for every $m \in \mathbb{Z}_{\geq 0}$, with $[{}_R\mathcal{GP}^\wedge_m]^{\perp} = [{}_R\mathcal{P}^\wedge_m]^{\perp} \cap {}_R\mathcal{GP}^{\perp}$; 

\item ${}_R\mathcal{GP}$ is special precovering. 
\end{enumerate}

Regarding the completeness of $({}_R\mathcal{GP},[{}_R\mathcal{GP}]^\perp)$, a surprising result by Cortés-Izurdiaga and \v{S}aroch in \cite[Thm. 5.2]{Cortes} asserts that if there exists an infinite regular cardinal $\lambda$ such that all projective left $R$-modules are $\lambda$-pure-injective, then $({}_R\mathcal{GP},[{}_R\mathcal{GP}]^\perp)$ is complete. Of course, if such cardinals exist, $({}_R\mathcal{GP}^\wedge_m,[{}_R\mathcal{GP}^\wedge_m]^\perp)$ is also a complete cotorsion pair for every $m \in \mathbb{Z}_{\geq 0}$.

\item Recall from Example \ref{ex:GP} (2) that $({}_R\mathcal{GF},{}_R\mathcal{GC})$ is a hereditary and perfect cotorsion pair in ${}_R\mathsf{Mod}$, for every ring $R$. If one considers the class ${}_R\mathcal{GF}^\wedge_m$ of left $R$-modules with Gorenstein flat dimension $\leq m$, El Maaouy proved in \cite[Coroll. 3.5]{Rachid} that $({}_R\mathcal{GF}^\wedge_m,[{}_R\mathcal{GF}^\wedge_m]^{\perp})$ is a hereditary and perfect cotorsion pair in ${}_R\mathsf{Mod}$, for every ring $R$, where $[{}_R\mathcal{GF}^\wedge_m]^{\perp} = [{}_R\mathcal{PGF}]^{\perp} \cap [{}_R\mathcal{F}^\wedge_m]^{\perp}$. Here, ${}_R\mathcal{PGF}$ denotes the class of projectively coresolved Gorenstein flat left $R$-modules (see \cite[\S 4]{Saroch}).

A possible way to prove that a hereditary and complete cotorsion pair $(\X,\X^\perp)$ is perfect is to show that $\X$ is closed under direct limits (see for instance \cite[Prop. 3.1]{GT06}). This is precisely the approach followed by El Maaouy with the complete cotorsion pair $({}_R\mathcal{GF}^\wedge_m,[{}_R\mathcal{GF}^\wedge_m]^{\perp})$ (see \cite[Coroll. 3.5]{Rachid}). Another way to show that this pair is complete is to use Theorem \ref{Family} (1), since we already know that the Frobenius pair $({}_R\mathcal{GF},{}_R\mathcal{F} \cap {}_R\mathcal{C})$ is $({}_R\mathcal{F} \cap {}_R\mathcal{C})^\wedge_m$-complete. This proof does not rely on the theory of projectively coresolved Gorenstein flat modules. Moreover, we obtain an alternative description of $[{}_R\mathcal{GF}^\wedge_m]^{\perp}$, namely, $[{}_R\mathcal{GF}^\wedge_m]^{\perp} = [({}_R\mathcal{F} \cap {}_R\mathcal{C})^\wedge_m]^{\perp_1} \cap {}_R\mathcal{GC}^\perp$. 

\item Regarding the class ${}_R\mathcal{PGF}$ just mentioned above, it is known by \cite[Thm. 4.9]{Saroch} that $({}_R\mathcal{PGF},{}_R\mathcal{PGF}^\perp)$ is a hereditary and complete cotorsion pair in ${}_R\mathsf{Mod}$, with ${}_R\mathcal{PGF}^\perp$ thick. On the other hand, since ${}_R\mathcal{PGF} \subseteq {}_R\mathcal{GP}$ by \cite[Thm. 4.4]{Saroch}, one can easily note that ${}_R\mathcal{PGF} \cap {}_R\mathcal{PGF}^\perp = {}_R\mathcal{P}$. By Theorem \ref{Family} (1), we have that $({}_R\mathcal{PGF}^{\wedge}_m, [{}_R\mathcal{P}^{\wedge}_m]^{\perp_1} \cap {}_R\mathcal{PGF}^\perp)$ is a hereditary and complete cotorsion pair in ${}_R\mathsf{Mod}$ for every $m \in \mathbb{Z}_{\geq 0}$, thus recovering \cite[Coroll. 2.6 (1)]{Chains}. 
\end{enumerate}
\end{example}

Another application of the previous theorem is the induction of new Frobenius pairs involving $\X^\wedge_m$.

\begin{corollary}\label{coro:inducedXmFP}
Let $(\X,\omega)$ be a Frobenius pair in $\C$ which is $\omega^{\wedge}_m$-complete in $\X^{\perp_1}$, then $(\X^\wedge_m, \omega^\wedge_m \cap [\omega^\wedge_m]^{\perp_1})$ is a Frobenius pair in $\C$.  
\end{corollary}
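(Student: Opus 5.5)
We need to verify (FP1)–(FP4) for the pair $(\X^\wedge_m,\nu)$, where $\nu := \omega^\wedge_m \cap [\omega^\wedge_m]^{\perp_1}$.

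\textbf{(FP1) and (FP2).} (FP1) is exactly Proposition \ref{prop:Frobenius_stable} (2), which says $\X^\wedge_m$ is left thick. For (FP2), $\omega^\wedge_m$ is closed under direct summands by Lemma \ref{LemaGorro} (1). The class $[\omega^\wedge_m]^{\perp_1}$ is closed under direct summands because $\Ext^1$ is additive. Hence $\nu$ is closed under direct summands.

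\textbf{(FP3).} We need $\Ext^i_{\C}(M,E)=0$ for all $M\in\X^\wedge_m$, $E\in\nu$ and $i\geq 1$. First note $\nu\subseteq [\omega^\wedge_m]^{\perp_1}\cap\X^{\perp_1}$, since $\omega^\wedge_m\subseteq\X^{\perp}$ by Lemma \ref{LemaGorro} (2). So Lemma \ref{Lema01} ((a)$\Rightarrow$(b)) gives the case $i=1$.

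For $i\geq 2$, dimension-shift inside $\X^\wedge_m$. Take $X'\rightarrowtail X\twoheadrightarrow M$ with $X\in\X$; such a sequence exists by \cite[Thm. 2.8]{BMSP} with $X'\in\omega^\wedge_{m-1}$. Then $X'\in\X^\wedge_m$, because $\X^\wedge_m$ is left thick. Since $E\in\omega^\wedge_m\subseteq\X^\perp$, we get $\Ext^{i}(X,E)=0$ for $i\geq1$. Therefore $\Ext^{i+1}(M,E)\cong\Ext^i(X',E)$, and induction on $i$ gives vanishing in all degrees.

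Alternatively, one can argue directly. Since $E\in\X^{\perp}$, we have $\id_{\X}(E)=0$, and by \cite[Lem. 2.1]{BMSP} this gives $\id_{\X^\wedge}(E)=\id_{\X}(E)$. Either route works.

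\textbf{(FP4).} We need $\nu\subseteq\X^\wedge_m$, which is clear. The remaining, main step is: for each $M\in\X^\wedge_m$, produce $M\rightarrowtail V\twoheadrightarrow M'$ with $V\in\nu$ and $M'\in\X^\wedge_m$.

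\emph{Step 1.} By \cite[Thm. 2.8]{BMSP}, take $M\rightarrowtail H\twoheadrightarrow X$ with $H\in\omega^\wedge_m$ and $X\in\X$.

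\emph{Step 2.} Since $H\in\X^{\perp}\subseteq\X^{\perp_1}$, condition (RC) gives $H\rightarrowtail K\twoheadrightarrow L'$ with $K\in[\omega^\wedge_m]^{\perp_1}$ and $L'\in\omega^\wedge_m$. As $\omega^\wedge_m$ is closed under extensions (it is $\X^{\perp}\cap\X^\wedge_m$, an intersection of extension-closed classes), $K\in\omega^\wedge_m$. Hence $K\in\nu$.

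\emph{Step 3.} Compose $M\rightarrowtail H\rightarrowtail K$ and set $M'=K/M$. This cokernel sits in an exact sequence $X\rightarrowtail M'\twoheadrightarrow L'$ (the $3\times3$ / Noether isomorphism). Since $X\in\X$ and $L'\in\omega^\wedge_m\subseteq\X^\wedge_m$, closure of $\X^\wedge_m$ under extensions gives $M'\in\X^\wedge_m$.

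I expect this last step, the relative cogenerator condition, to be the only real obstacle. It is exactly where right $\omega^\wedge_m$-completeness is needed, while (LC) plays no role. I also need to double-check that $\omega^\wedge_m$ is closed under extensions via Lemma \ref{LemaGorro} (2) together with Proposition \ref{prop:Frobenius_stable} (2).
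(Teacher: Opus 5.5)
Your main argument is correct and is essentially the paper's proof. (FP1)--(FP3) go exactly as there: Proposition \ref{prop:Frobenius_stable} (2), Lemma \ref{LemaGorro} (1), and Lemma \ref{Lema01} combined with dimension shifting inside $\X^\wedge_m$. Your (FP4) unpacks the right-completeness construction behind Theorem \ref{Family} (2), which the paper simply cites. You get $K\in\omega^\wedge_m$ directly as an extension of $H$ and $L'$, rather than via $\X^{\perp_1}\cap\X^\wedge_m=\omega^\wedge_m$. You are also right that only {\sf (RC)} is used, both in your argument and in the paper's.

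However, you should delete the ``alternative'' route in (FP3) and the sentence ``Either route works.'' The claimed equality $\id_{\X^\wedge}(E)=\id_{\X}(E)$ is false in general. The paper invokes \cite[Lem. 2.1]{BMSP} only in the form $\id_{\X}(\omega^\wedge_m)=\id_{\X}(\omega)$, where the resolved objects sit in the \emph{second} variable of $\Ext$. Enlarging $\X$ to $\X^\wedge$ in the \emph{first} variable can raise the dimension by up to the resolution dimension.

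Here is a concrete counterexample. Take the Frobenius pair $({}_{\mathbb{Z}}\mathcal{P},{}_{\mathbb{Z}}\mathcal{P})$ with $m=1$. It even satisfies the hypotheses of the corollary, since ${}_{\mathbb{Z}}\mathcal{P}^\wedge_1$ is the class of all abelian groups and injective envelopes give {\sf (RC)}. Then $E=\mathbb{Z}$ lies in $\X^\perp\cap\omega^\wedge_1$ and $M=\mathbb{Z}/2\mathbb{Z}\in\X^\wedge_1$, but $\Ext^1_{\mathbb{Z}}(M,E)\neq 0$.

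So vanishing on $\X^\wedge_m$ genuinely requires $E\in[\omega^\wedge_m]^{\perp_1}$. That is exactly what your first route exploits through Lemma \ref{Lema01}.
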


\begin{proof}
Let us verify the four conditions in the definition of Frobenius pairs:
\begin{itemize}
\item {\sf (FP1)}: It is a consequence of Proposition \ref{prop:Frobenius_stable} (2). 

\item {\sf (FP2)}: That $\omega^\wedge_m$ is closed under direct summands follows by Lemma \ref{LemaGorro} (1), while $[\omega^\wedge_m]^{\perp_1}$ clearly satisfies the same closure property. Then, $\omega^\wedge_m \cap [\omega^\wedge_m]^{\perp_1}$ is closed under direct summands. 

\item {\sf (FP3)}: We proceed as in part (1) of Theorem \ref{Family}. Let $L \in \X^\wedge_m$ and $K \in \omega^\wedge_m \cap [\omega^\wedge_m]^{\perp_1}$. Note that $K \in  [\omega^\wedge_m]^{\perp_1} \cap \X^\perp$ since $\id_{\X}(\omega^\wedge_m) = 0$. On the other hand, since $\X^\wedge_m$ is closed under $\X$-syzygies, for every $i > 0$ we have by dimension shifting and Lemma \ref{Lema01} that $\Ext^{i+1}_{\C}(L,K) \cong \Ext^1_{\C}(L',K) = 0$, where $L'$ is any $m$-th $\X$-syzygy of $L$. Hence, $\id_{\X^\wedge_m}(\omega^\wedge_m \cap [\omega^\wedge_m]^{\perp_1}) = 0$.

\item {\sf (FP4)}: The containment $\omega^\wedge_m \cap [\omega^\wedge_m]^{\perp_1} \subseteq \X^\wedge_m$ is clear. On the other hand, by Theorem \ref{Family} (2), for every $M \in \X^\wedge_m$ there is a short exact sequence $M \rightarrowtail K \twoheadrightarrow L$ with $L \in \X^\wedge_m$ and $K \in [\omega^\wedge_m]^{\perp_1} \cap \X^{\perp_1}$. By Lemma \ref{LemaGorro} (2) and the fact that $\X^\wedge_m$ is closed under extensions, we have that $K \in [\omega^\wedge_m]^{\perp_1} \cap \X^{\perp_1} \cap \X^\wedge_m = \omega^\wedge_m \cap [\omega^\wedge_m]^{\perp_1}$. Hence, $\omega^\wedge_m \cap [\omega^\wedge_m]^{\perp_1}$ is a relative cogenerator in $\X^\wedge_m$.
\end{itemize}
\end{proof}

\begin{example}
By Examples \ref{ExGF} and \ref{ex:PGF}, and the previous corollary, we have  for each $m \in \mathbb{Z}_{\geq 0}$ the following Frobenius pairs:
\[
({}_R\mathcal{GP}^\wedge_m, {}_R\mathcal{P}^\wedge_m \cap [{}_R\mathcal{P}^\wedge_m]^{\perp_1}),
\]
\[
({}_R\mathcal{PGF}^\wedge_m, {}_R\mathcal{P}^\wedge_m \cap [{}_R\mathcal{P}^\wedge_m]^{\perp_1}),
\]
\[
({}_R\mathcal{F}^\wedge_m, ({}_R\mathcal{F} \cap {}_R\mathcal{C})^\wedge_m \cap [({}_R\mathcal{F} \cap {}_R\mathcal{C})^\wedge_m]^{\perp_1})
\]
and
\[
({}_R\mathcal{GF}^\wedge_m, ({}_R\mathcal{F} \cap {}_R\mathcal{C})^\wedge_m \cap [({}_R\mathcal{F} \cap {}_R\mathcal{C})^\wedge_m]^{\perp_1}).
\]
\end{example}

%%%%%%%%%%%%%%%%%%%%%%%%%%%%%%%%%%%%%
%%%%%%%%%%%%%%%%%%%%%%%%%%%%%%%%%%%%%
%%%%%%%%%%%%%%%%%%%%%%%%%%%%%%%%%%%%%
%%%%%%%%%%%%%%%%%%%%%%%%%%%%%%%%%%%%%

\section{Some homotopical aspects from complete Frobenius pairs} \label{sec:model_structures}

This section is devoted to build hereditary abelian and exact model category structures by means of Hovey triples obtained from one or two Frobenius pairs induced by cotorsion pairs. The condition of $\omega$-completeness will be key in our constructions. More precisely, the first three methods to obtain model structures described below involve a hereditary and complete cotorsion pair $(\mathcal{X},\mathcal{X}^\perp)$ in an abelian category such that for $\omega = \mathcal{X} \cap \mathcal{X}^\perp$ there exists $m \in \mathbb{Z}_{\geq}$ for which $\omega^\wedge_m$ is the left half of a complete cotorsion pair. Several model structures in the literature involving the classes of Gorenstein projective and projectively coresolved Gorenstein flat modules can be obtained using one of these three methods. Regarding model structures involving Gorenstein flat modules, it is not true in general that $\omega^\wedge_m$ is the left half of a complete cotorsion pair. For these situations, we also present a forth method to obtain such model structures. \\

\begin{theorem}[method 1] \label{HoveyinC}
Let $(\mathcal{X},\mathcal{X}^{\perp})$ be a hereditary and complete cotorsion pair in an abelian category $\mathfrak{C}$ with enough projective objects. For $\omega = \mathcal{X} \cap \mathcal{X}^\perp$, if there exists $m \in \mathbb{Z}_{\geq 0}$ such that $(\omega^\wedge_m, [\omega^\wedge_m]^{\perp_1})$ is a complete cotorsion pair in $\C$ (and so, the induced Frobenius pair $(\X,\omega)$ is $\omega^\wedge_m$-complete in $\X^\perp$), then the following assertions are equivalent:
\begin{enumerate}[(a)]
\item $\X^\perp$ is closed under epikernels (and so, thick).

\item $\omega^\wedge_m$ is closed under epikernels (and so, the pair $(\omega^\wedge_m, [\omega^\wedge_m]^{\perp_1})$ is hereditary).

\item $\omega$ is the class $\mathcal{P}$ of projective objects of $\C$.
\end{enumerate}
If any of these conditions holds, then there is a Hovey triple $$(\X^\wedge_m, \X^\perp, [\mathcal{P}^\wedge_m]^\perp)$$ in $\C$ and the homotopy category of its associated hereditary abelian model structure is equivalent to the quotient category $$\X^\wedge_m \cap [\mathcal{P}^\wedge_m]^\perp / \sim,$$ where for two morphisms $f, g \colon X \to Y$ with $X, Y \in \X^\wedge_m \cap [\mathcal{P}^\wedge_m]^\perp$ one has that $f \sim g$ if, and only if, $f - g$ factors through a trivially bifibrant object (that is, and object in $\X^\wedge_m \cap \mathcal{X}^\perp \cap [\mathcal{P}^\wedge_m]^\perp = \mathcal{P}^\wedge_m \cap [\mathcal{P}^\wedge_m]^\perp$). \newpage 
\end{theorem}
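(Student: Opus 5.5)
We will prove the cycle (c) $\Rightarrow$ (a) $\Rightarrow$ (b) $\Rightarrow$ (c), and then build the Hovey triple from two hereditary complete cotorsion pairs via Gillespie's theorem.

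\textbf{The equivalences.} For (c) $\Rightarrow$ (a): if $\omega = \mathcal{P}$, then $\mathcal{P} \subseteq \X$. Let $A \rightarrowtail B \twoheadrightarrow C$ be exact with $B, C \in \X^\perp$ and take $X \in \X$. A special $\X^\perp$-preenvelope $X \rightarrowtail W \twoheadrightarrow X'$ has $W \in \omega = \mathcal{P}$ and $X' \in \X$. Dimension shifting gives $\Ext^1_\C(X,A) \cong \Ext^2_\C(X',A)$. Hereditariness of $(\X,\X^\perp)$ gives $\Ext^1_\C(X',C) = 0 = \Ext^2_\C(X',B)$, so the long exact sequence forces $\Ext^2_\C(X',A)=0$. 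The same argument in higher degrees shows $A \in \X^\perp$.

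For (a) $\Rightarrow$ (b): by Lemma \ref{LemaGorro} (2), $\omega^\wedge_m = \X^\perp \cap \X^\wedge_m$. Here $\X^\perp$ is closed under epikernels by (a), and $\X^\wedge_m$ is left thick by Proposition \ref{prop:Frobenius_stable} (2). Hence $\omega^\wedge_m$ is closed under epikernels.

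For (b) $\Rightarrow$ (c), which is the key step, take $W \in \omega$ and a projective cover-like epimorphism $P \twoheadrightarrow W$ with kernel $K$. We first need $\mathcal{P} \subseteq \omega^\wedge_m$. Since $\omega^\wedge_m$ is the left half of a complete cotorsion pair and $\mathcal{P} \subseteq {}^{\perp_1}(\text{anything})$, we get $\mathcal{P} \subseteq \omega^\wedge_m$. By (b), $K \in \omega^\wedge_m \subseteq \X^\perp$. Because $W \in \X$, the sequence splits, so $W$ is projective; thus $\omega \subseteq \mathcal{P}$.

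For the converse inclusion, let $P \in \mathcal{P}$. Then $P \in \omega^\wedge_m \subseteq \X^\wedge$, and $P \in \X^\perp$. We also want $P \in \X$. Take $K \rightarrowtail X \twoheadrightarrow P$ with $X \in \X$ and $K \in \omega^\wedge_{m-1}$, using \cite[Thm. 2.8]{BMSP}. This sequence splits since $P$ is projective, so $P \in \X$ by left thickness. Hence $P \in \X \cap \X^\perp = \omega$.

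The parenthetical in (b), that the pair is then hereditary, follows because in a category with enough projectives, a left class closed under epikernels and containing $\mathcal{P}$ gives a hereditary cotorsion pair by dimension shifting.

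\textbf{The Hovey triple.} With $\omega = \mathcal{P}$, set $\X' = \mathcal{P}^\wedge_m$ and $\Y = [\mathcal{P}^\wedge_m]^\perp$. The pair $(\mathcal{P}^\wedge_m, [\mathcal{P}^\wedge_m]^\perp)$ is hereditary and complete by hypothesis plus (b). By Theorem \ref{Family} (1)(i), the pair $(\X^\wedge_m, \Y')$ with $\Y' = [\mathcal{P}^\wedge_m]^{\perp_1} \cap \X^\perp$ is hereditary and complete. Since $[\mathcal{P}^\wedge_m]^{\perp_1} = [\mathcal{P}^\wedge_m]^\perp$, we have $\Y' = \Y \cap \X^\perp$.

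Clearly $\X' \subseteq \X^\wedge_m$ and $\Y' \subseteq \Y$. For Gillespie's compatibility condition, note that $\X^\wedge_m \cap \Y' = \X^\wedge_m \cap \X^\perp \cap \Y = \mathcal{P}^\wedge_m \cap \Y$, using Lemma \ref{LemaGorro} (2). This equals $\X' \cap \Y$, as required. Gillespie \cite{GillespieHereditary} then yields a thick class $\mathcal{W}$.

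It remains to show $\mathcal{W} = \X^\perp$. Thickness of $\X^\perp$ is given by (a), and $\X^\perp$ contains $\mathcal{P}^\wedge_m \subseteq \X^\perp$ and $\Y'$. Any object of $\mathcal{W}$ admits $M \rightarrowtail Y \twoheadrightarrow X'$ with $Y \in \Y'$ and $X' \in \X'$, both in $\X^\perp$, so $\mathcal{W} \subseteq \X^\perp$. Conversely, take $M \in \X^\perp$ and its special $\X^\wedge_m$-precover $Y' \rightarrowtail Q \twoheadrightarrow M$. Then $Q \in \X^\wedge_m \cap \X^\perp = \mathcal{P}^\wedge_m$, so $M \in \mathcal{W}$ by the second description of $\mathcal{W}$.

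The homotopy category statement follows from \cite[Thm. 2.6]{GillespieHereditary}, together with the identification $\X^\wedge_m \cap \X^\perp \cap \Y = \mathcal{P}^\wedge_m \cap \Y$. The main obstacle is the step (b) $\Rightarrow$ (c), specifically securing $\mathcal{P} \subseteq \omega^\wedge_m$ and $\mathcal{P} \subseteq \X$ cleanly.
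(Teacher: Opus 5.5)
Your proposal is correct and follows the paper's proof essentially step for step. It uses the same cycle of implications, the same dimension-shifting argument for (c) $\Rightarrow$ (a), the same splitting argument for $\omega \subseteq \mathcal{P}$, and the same compatible pairs $(\X^\wedge_m, [\mathcal{P}^\wedge_m]^{\perp}\cap\X^\perp)$ and $(\mathcal{P}^\wedge_m,[\mathcal{P}^\wedge_m]^\perp)$ fed into Gillespie's theorem. The differences are minor: you verify $\mathcal{W} = \X^\perp$ through the opposite descriptions of $\mathcal{W}$ (closure under epikernels and special $\X^\wedge_m$-precovers, instead of monocokernels and special $[\mathcal{P}^\wedge_m]^\perp$-preenvelopes). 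Your derivation of $\mathcal{P} \subseteq \omega$, via $\mathcal{P} \subseteq {}^{\perp_1}([\omega^\wedge_m]^{\perp_1}) = \omega^\wedge_m \subseteq \X^\perp$ together with $\mathcal{P} \subseteq \X$, is actually more careful than the paper's, whose splitting argument as written only places $P$ in $\omega^\wedge_m$.
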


\begin{proof} ~\
\begin{itemize}
\item (a) $\Rightarrow$ (b): By Lemma \ref{LemaGorro} (2) we have that $\omega^\wedge_m = \mathcal{X}^\wedge_m \cap \mathcal{X}^\perp$, where $\mathcal{X}^\wedge_m$ is closed under epikernels by Proposition \ref{prop:Frobenius_stable} (2). Then, since $\mathcal{X}^\perp$ is closed under epikernels by assumption, so is $\omega^\wedge_m$. 

\item (b) $\Rightarrow$ (c): For any $P \in \mathcal{P}$, since $(\omega^\wedge_m, [\omega^\wedge_m]^{\perp_1})$ is a complete cotorsion pair, there is an exact sequence $K \rightarrowtail W \twoheadrightarrow P$ with $W \in \omega$, which is split since $P$ is projective. Then, $P \in \omega$ since $\omega$ is closed under direct summands. 

Now let $W \in \omega$. Since $\C$ has enough projective objects, there is an exact sequence $W' \rightarrowtail P \twoheadrightarrow W$ with $P \in \mathcal{P} \subseteq \omega$. Using the assumption that $\omega^\wedge_m$ is closed under epikernels, we have $W' \in \omega^\wedge_m$. Note then that $\Ext^1_{\C}(W,W') = 0$ since $\id_{\X}(\omega^\wedge) = 0$, $W \in \X$ and $W' \in \omega^\wedge_m$. Thus, the previous exact sequence splits, and hence $W$ is projective.  

\item (c) $\Rightarrow$ (a): The following argument mimics the proof of \cite[Lem. 3.2 (2)]{Cortes}. Consider a short exact sequence $Y_1 \rightarrowtail Y_2 \twoheadrightarrow Y_3$ with $Y_2, Y_3 \in \mathcal{X}^\perp$, and let $X \in \mathcal{X}$. Since $(\mathcal{X},\omega = \mathcal{P})$ is a Frobenius pair, there is an exact sequence $X \rightarrowtail P \twoheadrightarrow X'$ with $P \in \mathcal{P}$ and $X' \in \mathcal{X}$. On the one hand, for $X' \in \mathcal{X}$ we have the induced exact sequence 
\[
\Ext^1_{\C}(X',Y_3) \to \Ext^2_{\C}(X',Y_1) \to \Ext^2_{\C}(X',Y_2),
\]
where $\Ext^1_{\C}(X',Y_3) = 0$ and $\Ext^2_{\C}(X',Y_2) = 0$ since $Y_2, Y_3 \in \mathcal{X}^\perp$, and so $\Ext^2_{\C}(X',Y_1) = 0$. On the other hand, we have the induced exact sequence
\[
\Ext^1_{\C}(P,Y_1) \to \Ext^1_{\C}(X,Y_1) \to \Ext^2_{\C}(X',Y_1),
\]
where $\Ext^1_{\C}(P,Y_1) = 0$ and $\Ext^2_{\C}(X',Y_1) = 0$, and so $\Ext^1_{\C}(X,Y_1) = 0$. It follows that $Y_1 \in \mathcal{X}^{\perp_1} = \mathcal{X}^\perp$. 
\end{itemize}
Regarding the last part, from any of these conditions and Theorem \ref{Family} (1)-(i), we have two hereditary and complete cotorsion pairs $(\mathcal{X}^\wedge_m, [\mathcal{X}^\wedge_m]^\perp)$ and $(\mathcal{P}^\wedge_m,[\mathcal{P}^\wedge_m]^\perp)$, where $\mathcal{P}^\wedge_m \subseteq \mathcal{X}^\wedge_m$ and $[\X^{\wedge}_m]^{\perp} = [\mathcal{P}^\wedge_m]^\perp \cap \X^\perp$. These two pairs are compatible, since from Lemma \ref{LemaGorro} (2) we have that
\[
\mathcal{X}^\wedge_m \cap [\mathcal{X}^\wedge_m]^\perp = \mathcal{X}^\wedge_m \cap [\mathcal{P}^\wedge_m]^\perp \cap \X^\perp = \mathcal{P}^\wedge_m \cap [\mathcal{P}^\wedge_m]^\perp.
\]
It then follows that there is a unique thick class $\mathcal{W}$ such that $(\mathcal{X}^\wedge_m,\mathcal{W},[\mathcal{P}^\wedge_m]^\perp)$ is a Hovey triple. Concretely, 
\begin{align*}
\mathcal{W} & = \{ M \in \C \, \text{ : } \, \text{there is a s.e.s. } M \rightarrowtail K \twoheadrightarrow L' \, \text{ with $K \in [\mathcal{X}^\wedge_m]^\perp$ and $L' \in \mathcal{P}^\wedge_m$} \} \\
& = \{ M \in \C \, \text{ : } \, \text{there is a s.e.s. } K' \rightarrowtail L \twoheadrightarrow M \, \text{ with $L \in \mathcal{P}^\wedge_m$ and $K' \in [\mathcal{X}^\wedge_m]^\perp$} \}. 
\end{align*}
We show that $\mathcal{W} = \mathcal{X}^\perp$. For any $W \in \mathcal{W}$, there exists a short exact sequence $K' \rightarrowtail L \twoheadrightarrow W$ with $L \in \mathcal{P}^\wedge_m$ and $K' \in [\mathcal{X}^\wedge_m]^\perp$. Since $\mathcal{P}^\wedge_m, [\mathcal{X}^\wedge_m]^\perp \subseteq \mathcal{X}^\perp$, and $\mathcal{X}^\perp$ is closed under monocokernels, we have that $W \in \mathcal{X}^\perp$. Hence, $\mathcal{W} \subseteq \mathcal{X}^\perp$. On the other hand, for any $Y \in \mathcal{X}^\perp$ we have a short exact sequence $Y \rightarrowtail K \twoheadrightarrow L'$ with $K \in [\mathcal{P}^\wedge_m]^\perp$ and $L' \in \mathcal{P}^\wedge_m$. Note that $L' \in \mathcal{X}^\perp$, and since $\mathcal{X}^\perp$ is closed under extensions, we get $K \in [\mathcal{P}^\wedge_m]^\perp \cap \mathcal{X}^\perp = [\X^{\wedge}_m]^{\perp}$, and hence $Y \in \mathcal{W}$. \\ 
\end{proof}

\begin{example}\label{ex:Gorenstein-models-Rmod} ~\
\begin{enumerate}[(1)]
\item We can recover \cite[Coroll. 2.6 (2) \& Thm. 3.4]{Chains} from the previous theorem. Indeed, over an arbitrary ring $R$, since $({}_R\mathcal{PGF},{}_R\mathcal{PGF}^\perp)$ is a hereditary and complete cotorsion pair with ${}_R\mathcal{PGF} \cap {}_R\mathcal{PGF}^\perp = {}_R\mathcal{P}$ (see Example \ref{ex:PGF}), there exists a unique hereditary abelian model structure on ${}_R\mathsf{Mod}$ given by the Hovey triple 
\[
({}_R\mathcal{PGF}^\wedge_m,{}_R\mathcal{PGF}^\perp,[{}_R\mathcal{P}^\wedge_m]^\perp)
\]
for every $m \in \mathbb{Z}_{\geq 0}$. On the other hand, if $R$ is a ring over which every left $R$-module has a Gorenstein projective special precover, then for every $m \in \mathbb{Z}_{\geq 0}$ one has the Hovey triple 
\[
({}_R\mathcal{GP}^\wedge_m,{}_R\mathcal{GP}^\perp,[{}_R\mathcal{P}^\wedge_m]^\perp).
\] 

\item Recall that a \textbf{duality pair} $(\mathcal{L,A})$ over $R$ is given by two classes $\mathcal{L} \subseteq {}_R\mathsf{Mod}$ and $\mathcal{A} \subseteq \mathsf{Mod}_R$ satisfying:
\begin{itemize}
\item $M \in \mathcal{L}$ if, and only if, $M^+ := \Hom_{\mathbb{Z}}(M,\mathbb{Q/Z}) \in \mathcal{A}$.

\item $\mathcal{A}$ is closed under direct summands and finite direct sums. 
\end{itemize}
The duality pair $(\mathcal{L,A})$ is called \textbf{perfect} if $\mathcal{L}$ is closed under coproducts and extensions, and $R \in \mathcal{L}$ as a left $R$-module. If for a perfect duality pair $(\mathcal{L,A})$ one has that $(\mathcal{A,L})$ is also a duality pair, then $(\mathcal{L,A})$ is called \textbf{complete}. 
\begin{enumerate}
\item[(i)] If $R$ is a right coherent ring, we can deduce from \cite{Fieldhouse} that $({}_R\mathcal{F},\mathcal{AP}_R)$ is a complete duality pair, where $\mathcal{AP}_R$ denotes the class of absolutely pure (a.k.a. FP-injective) right $R$-modules. 

\item[(ii)] Over any ring $R$, if ${}_R\mathcal{L}$ and $\mathcal{AC}_R$ denote the classes of level left $R$-modules and absolutely clean right $R$-modules, respectively, then $({}_R\mathcal{L},\mathcal{AC}_R)$ is a complete duality pair. See \cite[\S 2]{BravoGillespieHovey-arxiv} to recall these concepts. 
\end{enumerate}
More examples of compete duality pairs can by found in \cite[\S 2.1]{Gillespie-DP}. On the other hand, given a (complete) duality pair $(\mathcal{L,A})$, recall from \cite[Def. 4.2]{Gillespie-DP} that a left $R$-module $M$ is \textbf{Gorenstein $\bm{(\mathcal{L,A})}$-projective} if there exists an exact complex $P_\bullet$ of projective left $R$-modules such that $M = Z_0(P_\bullet)$ and the induced complex $\Hom_R(P_\bullet, L)$ of abelian groups is exact for every $L \in \mathcal{L}$. Let us denote by $\mathcal{GP}_{(\mathcal{L,A})}$ the class of Gorenstein $(\mathcal{L,A})$-projective left $R$-modules. In particular:
\begin{enumerate}
\item[(iii)] $\mathcal{GP}_{(\mathcal{L,A})}$ coincides with the class of Ding projective left $R$-modules, denoted ${}_R\mathcal{DP}$, if $(\mathcal{L,A}) = ({}_R\mathcal{F},\mathcal{AP}_R)$.

\item[(iv)] $\mathcal{GP}_{(\mathcal{L,A})}$ coincides with the class of Gorenstein AC-projective left $R$-modules, denoted ${}_R\mathcal{GP}_{\rm AC}$, if $(\mathcal{L,A}) = ({}_R\mathcal{L},\mathcal{AC}_R)$. 
\end{enumerate}
In \cite[dual results in \S 4]{Gillespie-DP}, it is shown that if $(\mathcal{L,A})$ is a complete duality pair, then $(\mathcal{GP}_{(\mathcal{L,A})},[\mathcal{GP}_{(\mathcal{L,A})}]^\perp)$ is a hereditary and complete cotorsion pair in ${}_R\mathsf{Mod}$. It is clear from the definition of $\mathcal{GP}_{(\mathcal{L,A})}$ that $\mathcal{GP}_{(\mathcal{L,A})} \cap [\mathcal{GP}_{(\mathcal{L,A})}]^\perp = {}_R\mathcal{P}$. It then follows by Theorem \ref{HoveyinC} that $[\mathcal{GP}_{(\mathcal{L,A})}]^\perp$ is thick (also stated in \cite[dual of Lem. 4.5]{Gillespie-DP}) and that for every $m \in \mathbb{Z}_{\geq 0}$ there is a hereditary abelian model structure represented by the Hovey triple
\[
([\mathcal{GP}_{(\mathcal{L,A})}]^\wedge_m,[\mathcal{GP}_{(\mathcal{L,A})}]^\perp,[{}_R\mathcal{P}^\wedge_m]^\perp).
\]
In particular, over any ring $R$ we obtain the hereditary abelian model structure on ${}_R\mathsf{Mod}$ represented by the Hovey triple
\[
([{}_R\mathcal{GP}_{\rm AC}]^\wedge_m,[{}_R\mathcal{GP}_{\rm AC}]^\perp,[{}_R\mathcal{P}^\wedge_m]^\perp).
\] 
The authors are not aware if these model structures were previously recorded in the literature besides the case $m = 0$ discovered in \cite[Thm. 8.5]{BravoGillespieHovey-arxiv}.

On the other hand, if $R$ is right coherent, then ${}_R\mathcal{F} = {}_R\mathcal{L}$ and $\mathcal{AP}_R = \mathcal{AC}_R$ by \cite[Corolls. 2.9 \& 2.11]{BravoGillespieHovey-arxiv}, and so the previous Hovey triple becomes
\[
([{}_R\mathcal{DP}]^\wedge_m,[{}_R\mathcal{DP}]^\perp,[{}_R\mathcal{P}^\wedge_m]^\perp),
\] 
generalizing thus the model structures mentioned in \cite[p. 1466]{WeiWangLiu-DingModels} (which were obtained over Ding-Chen rings). One can also note from \cite[Thm. A.6]{BravoGillespieHovey-arxiv} that ${}_R\mathcal{PGF} \supseteq {}_R\mathcal{GP}_{\rm AC}$ for any ring $R$, and that reverse containment holds when $R$ is right coherent. 
\end{enumerate}
\end{example}

\begin{remark}\label{rmk:Frob_pair_from_aproxs} ~\
\begin{enumerate}[(1)]
\item Let $(\X,\X^\perp)$ be a hereditary cotorsion pair in $\C$ such that every object in $\X^\wedge$ has a special $\X^\perp$-preenvelope. Then, for $\omega = \X \cap \X^\perp$ the following assertions hold:
\begin{enumerate}[(i)]
\item $(\X,\omega)$ is a Frobenius pair in $\C$.

\item If $\C$ has enough projective objects and $(\omega^\wedge_m, [\omega^\wedge_m]^{\perp_1})$ is a complete cotorsion pair in $\C$ for some $m \in \mathbb{Z}_{\geq 0}$, then assertions (a), (b) and (c) in Theorem \ref{HoveyinC} are equivalent. 
\end{enumerate}
Indeed, for (1) it suffices to check that $\omega$ is a relative cogenerator in $\X$, but this follows from the existence of special $\X^\perp$-preenvelopes for objects in $\X^\wedge$ and the fact that $\X$ is closed under extensions. Then, as $(\X,\omega)$ is a Frobenius pair, we can apply Lemma \ref{LemaGorro} (2) and mimic the proof of Theorem \ref{HoveyinC} to get the equivalence between (a), (b) and (c). 

\item In \cite[Thm. VI.2.1]{BeligiannisReiten}, the authors show that (a) and (c) in Theorem \ref{HoveyinC} are equivalent under different assumptions, namely, that $(\mathcal{X},\mathcal{X}^\perp)$ is a hereditary and complete cotorsion pair in an abelian category $\C$ with enough projective objects such that $\omega = \mathcal{X} \cap \mathcal{X}^\perp$ \textbf{functorially finite} (that is, precovering and preenveloping). The latter is not the case in general for $\omega = {}_R\mathcal{P}$. 

A complete cotorsion pair $(\mathcal{X},\mathcal{X}^{\perp_1})$ in an abelian category $\C$ with enough projective objects is called \textbf{projective} if $\mathcal{X} \cap \mathcal{X}^{\perp_1}$ is the class of projective objects and $\mathcal{X}^{\perp_1}$ is thick. Besides \cite{BeligiannisReiten}, these pairs have also been considered in other parts of the literature, like for instance in \cite{GillespieAdvances}, where the author proves a complete description for them in Proposition 3.7. 

\item Frobenius pairs $(\mathcal{X},\omega)$ for which $(\omega^\wedge_m,[\omega^\wedge_m]^{\perp_1})$ is a complete cotorsion pair will be also considered later in Proposition \ref{coro:Rep_omega_complete}.
\end{enumerate}
\end{remark}

The Gorenstein cotorsion pairs mentioned in Example \ref{ex:Gorenstein-models-Rmod} are of course projective. More examples can be found in the category of chain complexes.

\begin{example}\label{ex:pairs_in_Ch}
Let ${}_R\mathsf{Ch}$ denote the category of chain complexes of left $R$-modules. Given a cotorsion pair $(\mathcal{X},\mathcal{X}^{\perp_1})$, recall from \cite{GillespieFlat} the following notations: 
\begin{enumerate}[(i)]
\item ${\rm dw}\,\widetilde{\X}$ denotes the class of complexes $X_\bullet \in {}_R\mathsf{Ch}$ such that $X_k \in \X$ for every $k \in \mathbb{Z}$.

\item $\widetilde{\mathcal{X}}$ denotes the class of exact complexes with cycles in $\mathcal{X}$. 

\item ${\rm ex}\,\widetilde{\X}$ denotes the class of exact complexes in ${\rm dw}\,\widetilde{\X}$.

\item ${\rm dg}\,\widetilde{\X}$ denotes the class of complexes in $X_\bullet \in {\rm dw}\,\widetilde{\X}$ such that $\mathcal{H}om(X_{\bullet}, W_\bullet)$ is exact for every $W_\bullet \in \widetilde{\mathcal{X}^{\perp_1}}$. The definition of ${\rm dg}\,\widetilde{\X^{\perp_1}}$ is dual. Here, $\mathcal{H}om(X_{\bullet}, W_\bullet)$ denotes the complex of abelian groups with
\[
\mathcal{H}om(X_{\bullet}, W_\bullet)_n := \prod_{k \in \mathbb{Z}} \Hom_R(X_k,W_{n+k})
\]
and with differentials given by
\[
(f_k)_{k \in \mathbb{Z}} \mapsto [\partial_{n+k} \circ f_k - (-1)^n f_{k-1} \circ \partial_k]_{k \in \mathbb{Z}}
\] 
(see \cite[\S 2.1]{GRcomplexes} for details). 
\end{enumerate}
In \cite[Prop. 7.3]{GillespieAdvances}, the author shows that if $(\mathcal{X},\mathcal{X}^\perp)$ is a projective cotorsion pair cogenerated by some set (and so complete), then
\[
({\rm dw}\,\widetilde{\mathcal{X}}, [{\rm dw}\,\widetilde{\mathcal{X}}]^\perp), \, \, ({\rm ex}\,\widetilde{\mathcal{X}}, [{\rm ex}\,\widetilde{\mathcal{X}}]^\perp), \, \, (\widetilde{\mathcal{X}}, {\rm dg}\,\widetilde{\mathcal{X}^\perp}) \, \, \text{and} \, \, ({\rm dg}\widetilde{\mathcal{X}}, \widetilde{\mathcal{X}^\perp})
\] 
are also projective cotorsion pairs. Then, the intersections
\[
{\rm dw}\,\widetilde{\mathcal{X}} \cap [{\rm dw}\,\widetilde{\mathcal{X}}]^\perp, \, \, {\rm ex}\,\widetilde{\mathcal{X}} \cap [{\rm ex}\,\widetilde{\mathcal{X}}]^\perp, \, \, \widetilde{\mathcal{X}} \cap {\rm dg}\,\widetilde{\mathcal{X}^\perp} \, \, \text{and} \, \, {\rm dg}\widetilde{\mathcal{X}} \cap \widetilde{\mathcal{X}^\perp}
\]
coincide with the class of projective complexes of left $R$-modules. The latter class is given by $\widetilde{{}_R\mathcal{P}}$, the class of exact chain complexes with cycles in ${}_R\mathcal{P}$. By \cite[Thm. 7.4.6]{EnJen00} and \cite[Thm. 7.3.2]{EnJen00-2}, we have that $(\widetilde{{}_R\mathcal{P}^\wedge_m}, [\widetilde{{}_R\mathcal{P}^\wedge_m}]^\perp)$ is a hereditary and complete cotorsion pair in ${}_R\mathsf{Ch}$. On the other hand, note that $\widetilde{{}_R\mathcal{P}^\wedge_m} = [\widetilde{{}_R\mathcal{P}}]^\wedge_m$. We are thus under the assumptions of Theorem \ref{HoveyinC}, and hence we obtain the following Hovey triples in ${}_R\mathsf{Ch}$:
\begin{enumerate}[(i)]
\item $([{\rm dw}\,\widetilde{\mathcal{X}}]^\wedge_m, [{\rm dw}\,\widetilde{\mathcal{X}}]^\perp, [(\widetilde{{}_R\mathcal{P}})^\wedge_m]^\perp)$,

\item $([{\rm ex}\,\widetilde{\mathcal{X}}]^\wedge_m, [{\rm ex}\,\widetilde{\mathcal{X}}]^\perp, [(\widetilde{{}_R\mathcal{P}})^\wedge_m]^\perp)$,

\item $(\widetilde{\mathcal{X}}^\wedge_m, \widetilde{\mathcal{X}}^\perp, [(\widetilde{{}_R\mathcal{P}})^\wedge_m]^\perp)$, and

\item $([{\rm dg}\,\widetilde{\mathcal{X}}]^\wedge_m, [{\rm dg}\,\widetilde{\mathcal{X}}]^\perp, [(\widetilde{{}_R\mathcal{P}})^\wedge_m]^\perp)$.
\end{enumerate} 
In particular, one has the chain complex versions of the model structures mentioned in Example \ref{ex:Gorenstein-models-Rmod}.
\end{example}

In the previous model structures, the classes of cofibrant objects are given by objects with $\mathcal{X}$-resolution dimension uniformly bounded by $m$. In the following theorem, we consider two bounds $m$ and $n \geq m$, where the latter defines the trivial and fibrant objects. The following lemma is a consequence of Lemma \ref{LemaGorro} (2) and Proposition \ref{prop:Frobenius_stable} (2), along with the fact that $\X^\perp$ is closed under monocokernels.

\begin{lemma} \label{GruesaIzq}
Let $(\X, \omega)$ be a Frobenius pair and $m \in \mathbb{Z}_{\geq 0}$. For any short exact sequence $X \rightarrowtail Y \twoheadrightarrow Z$, if $X, Y \in \omega^\wedge_m$ and $Z \in \mathcal{X}^\wedge_m$, then $Z \in \omega^\wedge_m$. Furthermore, $\omega^\wedge_m$ is a right thick class in the w.i.c. exact subcategory $\X^\wedge_m$.
\end{lemma}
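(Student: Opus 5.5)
The plan is to reduce both assertions to the identity $\omega^\wedge_m = \X^{\perp_1} \cap \X^\wedge_m$ from Lemma \ref{LemaGorro} (2), together with closure properties of the two intersected classes.

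For the first assertion, take a short exact sequence $X \rightarrowtail Y \twoheadrightarrow Z$ with $X, Y \in \omega^\wedge_m$ and $Z \in \X^\wedge_m$. By Lemma \ref{LemaGorro} (2), $X, Y \in \X^\perp$. The class $\X^\perp$ is closed under monocokernels. Indeed, for any $X' \in \X$ we have the exact sequence
\[
\Ext^i_{\C}(X',Y) \to \Ext^i_{\C}(X',Z) \to \Ext^{i+1}_{\C}(X',X),
\]
and both outer terms vanish, so $Z \in \X^\perp$. Hence $Z \in \X^\perp \cap \X^\wedge_m = \omega^\wedge_m$, again by Lemma \ref{LemaGorro} (2).

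For the second assertion, I work in the exact category $\X^\wedge_m$, whose conflations are the short exact sequences of $\C$ with all three terms in $\X^\wedge_m$. This category is w.i.c. because $\X^\wedge_m$ is left thick by Proposition \ref{prop:Frobenius_stable} (2), hence closed under extensions and direct summands. Right thickness of $\omega^\wedge_m$ there means three things: closure under direct summands, closure under extensions, and closure under cokernels of inflations between its objects.
\begin{itemize}
\item Direct summands: this is Lemma \ref{LemaGorro} (1).
\item Cokernels of inflations: this is exactly the first assertion, since a conflation in $\X^\wedge_m$ has its third term in $\X^\wedge_m$.
\item Extensions: for $X \rightarrowtail Y \twoheadrightarrow Z$ with $X, Z \in \omega^\wedge_m$, closure of $\X^\wedge_m$ under extensions gives $Y \in \X^\wedge_m$. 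Closure of $\X^\perp$ under extensions gives $Y \in \X^\perp$. Lemma \ref{LemaGorro} (2) then gives $Y \in \omega^\wedge_m$.
\end{itemize}

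The only point requiring care is the meaning of ``right thick'' inside the exact subcategory. I read it as closure under extensions, cokernels of admissible monomorphisms and direct summands, computed within $\X^\wedge_m$, which is why the hypothesis $Z \in \X^\wedge_m$ is built in. Beyond fixing this convention, I expect no real obstacle.
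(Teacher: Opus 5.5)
Your proof is correct and is essentially the paper's argument. The paper justifies this lemma in a single line by citing the identity $\omega^\wedge_m = \X^{\perp}\cap\X^\wedge_m$ from Lemma \ref{LemaGorro} (2), the left thickness of $\X^\wedge_m$ from Proposition \ref{prop:Frobenius_stable} (2), and the closure of $\X^\perp$ under monocokernels; these are exactly your ingredients, and you simply spell out the $\Ext$ long exact sequence and check the three closure conditions (direct summands, extensions, cokernels of inflations) explicitly.
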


\begin{theorem}[method 2]\label{thm:models_two_bounds}
Let $(\mathcal{X},\mathcal{X}^\perp)$ be a hereditary cotorsion pair in $\C$ (not necessarily with enough projective objects) such that every object in $\mathcal{X}^\wedge$ has a special $\mathcal{X}$-precover and a special $\X^\perp$-preenvelope.  For $\omega = \mathcal{X} \cap \mathcal{X}^\perp$, if $(\omega^\wedge_m, [\omega^\wedge_m]^{\perp_1})$ is a complete cotorsion pair in $\C$ for some $m \in \mathbb{Z}_{\geq 0}$, then for every $n \geq m$ there is a hereditary Hovey triple in the w.i.c. exact subcategory $\mathcal{X}^\wedge_n$ given by $$(\X^\wedge_m, \omega^\wedge_n, [\omega^\wedge_m]^{\perp_1} \cap \X^\wedge_n),$$ provided that $\omega^\wedge_n$ is closed under epikernels. The homotopy category of its associated hereditary abelian model structure is equivalent to the quotient category $$\X^\wedge_m \cap [\omega^\wedge_m]^{\perp_1} / \sim,$$ where for two morphisms $f, g \colon X \to Y$ with $X, Y \in \X^\wedge_m \cap [\omega^\wedge_m]^{\perp_1}$ one has that $f \sim g$ if, and only if, $f - g$ factors through a trivially bifibrant object (that is, and object in $\omega^\wedge_m \cap [\omega^\wedge_m]^{\perp_1}$). 
\end{theorem}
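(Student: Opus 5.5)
The plan is to apply Gillespie's construction of hereditary Hovey triples from two compatible complete cotorsion pairs, in its version for w.i.c. exact categories, to $\mathcal{B} := \X^\wedge_n$. The two pairs in $\mathcal{B}$ will be
\[
(\X^\wedge_m,\ [\omega^\wedge_m]^{\perp_1} \cap \X^\perp \cap \X^\wedge_n) \quad \text{and} \quad (\omega^\wedge_m,\ [\omega^\wedge_m]^{\perp_1} \cap \X^\wedge_n).
\]
I would then identify the induced thick class $\mathcal{W}$ with $\omega^\wedge_n$. Since $\omega^\wedge_n = \X^\perp \cap \X^\wedge_n$ by Lemma \ref{LemaGorro} (2), the first right half equals $[\omega^\wedge_m]^{\perp_1} \cap \X^\wedge_n \cap \omega^\wedge_n$. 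This is exactly what the triple requires: trivially fibrant equals fibrant intersected with trivial.

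\emph{Step 1 (setup).} By Remark \ref{rmk:Frob_pair_from_aproxs} (1), $(\X,\omega)$ is a Frobenius pair, so $\X^\wedge_n$ is left thick and hence a w.i.c. exact category containing $\X^\wedge_m$ and $\omega^\wedge_m$. Next I check that $(\X,\omega)$ is $\omega^\wedge_m$-complete in $\X^\perp$. For $M \in \X^\perp$, the approximation sequences $K' \rightarrowtail L \twoheadrightarrow M$ and $M \rightarrowtail K \twoheadrightarrow L'$ coming from the complete pair $(\omega^\wedge_m,[\omega^\wedge_m]^{\perp_1})$ already have the required form.

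\emph{Step 2 (first pair).} I would rerun the proof of Theorem \ref{Family} (1)-(i) with two changes. The special $\X$-precovers and $\X^\perp$-preenvelopes are now used only for objects of $\X^\wedge$, and Remark \ref{RK-H-C} is applied only to objects $M \in \X^\wedge$. This gives orthogonality inside $\X^\wedge$, hereditariness by dimension shifting along $\X$-syzygies, and the two approximation sequences for objects of $\X^\wedge_n$. In those sequences the middle terms lie in $\X^\wedge_n$ by extension closure. Lemma \ref{RK-H-C2}, with $\mathcal{Y} = \X^\wedge_m$ and $\mathcal{B} = \X^\wedge_n$, then yields a hereditary complete cotorsion pair in $\X^\wedge_n$.

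\emph{Step 3 (second pair).} I apply Lemma \ref{RK-H-C2} to the complete pair $(\omega^\wedge_m,[\omega^\wedge_m]^{\perp_1})$ in $\C$ with $\mathcal{B} = \X^\wedge_n$. For hereditariness I use the exact-category criterion from \cite[Lem. 6.17]{Stovi14}: the left class must be closed under kernels of deflations in $\mathcal{B}$. If $A \rightarrowtail B \twoheadrightarrow C$ has $B, C \in \omega^\wedge_m$, then $A \in \X^\wedge_m$ since $\X^\wedge_m$ is closed under epikernels, and $A \in \X^\perp$ since $(\X,\X^\perp)$ is hereditary. Hence $A \in \omega^\wedge_m$ by Lemma \ref{LemaGorro} (2).

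\emph{Step 4 (compatibility).} The inclusions $\omega^\wedge_m \subseteq \X^\wedge_m$ and $[\omega^\wedge_m]^{\perp_1} \cap \X^\perp \cap \X^\wedge_n \subseteq [\omega^\wedge_m]^{\perp_1} \cap \X^\wedge_n$ are clear. For the cores, Lemma \ref{LemaGorro} (2) gives
\[
\X^\wedge_m \cap \X^\perp \cap [\omega^\wedge_m]^{\perp_1} = \omega^\wedge_m \cap [\omega^\wedge_m]^{\perp_1},
\]
which is also the core of the second pair. This yields the Hovey triple, and the description of the homotopy category follows from \cite[Thm. 2.6]{GillespieHereditary}, used in its exact-category form.

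\emph{Step 5 (identifying $\mathcal{W}$; main obstacle).} By Gillespie's description, $\mathcal{W}$ consists of the objects $M \in \X^\wedge_n$ admitting $K' \rightarrowtail L \twoheadrightarrow M$ with $L \in \omega^\wedge_m$ and $K'$ trivially fibrant.

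For $\mathcal{W} \subseteq \omega^\wedge_n$: both $K'$ and $L$ lie in $\X^\perp$, which is closed under monocokernels, so $M \in \X^\perp \cap \X^\wedge_n = \omega^\wedge_n$.

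For $\omega^\wedge_n \subseteq \mathcal{W}$: given $M \in \omega^\wedge_n$, take $M \rightarrowtail K \twoheadrightarrow L'$ with $L' \in \omega^\wedge_m$ and $K \in [\omega^\wedge_m]^{\perp_1}$. Then $K \in \X^\wedge_n \cap \X^\perp$ by extension closure, so $K$ is trivially fibrant, and the dual description of $\mathcal{W}$ puts $M$ in $\mathcal{W}$.

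The delicate point is that the uniqueness of $\mathcal{W}$ in Gillespie's theorem presupposes thickness inside $\X^\wedge_n$. I would check this directly for $\omega^\wedge_n$: extension closure and closure under monocokernels come from Lemma \ref{GruesaIzq}, and closure under epikernels is exactly the stated hypothesis on $\omega^\wedge_n$.
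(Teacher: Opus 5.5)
\textbf{Step 3 has a genuine gap.} For hereditariness of $(\omega^\wedge_m,[\omega^\wedge_m]^{\perp_1}\cap\X^\wedge_n)$ you need $\omega^\wedge_m$ to be closed under epikernels. You argue that the kernel $A$ of $B\twoheadrightarrow C$, with $B,C\in\omega^\wedge_m$, lies in $\X^\perp$ ``since $(\X,\X^\perp)$ is hereditary''. That implication is false. Hereditariness makes $\X^\perp$ closed under cokernels of monomorphisms, not under kernels of epimorphisms. From $B,C\in\X^\perp$ you only get $\Ext^i_{\C}(X,A)=0$ for $i\geq 2$, while $\Ext^1_{\C}(X,A)$ is the cokernel of $\Hom_{\C}(X,B)\to\Hom_{\C}(X,C)$. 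For example, $({}_{\mathbb{Z}}\mathsf{Mod},{}_{\mathbb{Z}}\mathcal{I})$ is a hereditary complete cotorsion pair, yet in $\mathbb{Z}\rightarrowtail\mathbb{Q}\twoheadrightarrow\mathbb{Q}/\mathbb{Z}$ both end terms are injective and the kernel is not. Closure of $\X^\perp$, or of $\omega^\wedge_m$, under epikernels is a genuine extra condition. Under the hypotheses of Theorem \ref{HoveyinC}, it is equivalent to $\omega$ being the class of projectives.

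\textbf{The fix} is to use the hypothesis you only invoke in Step 5. Since $B,C\in\omega^\wedge_m\subseteq\omega^\wedge_n$ and $\omega^\wedge_n$ is closed under epikernels, you get $A\in\omega^\wedge_n\subseteq\X^\perp$. Together with $A\in\X^\wedge_m$, Lemma \ref{LemaGorro} (2) gives $A\in\omega^\wedge_m$. This is exactly the paper's argument: $\omega^\wedge_m=\X^\wedge_m\cap\omega^\wedge_n$ is an intersection of two epikernel-closed classes.

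\textbf{A symptom of the gap.} As written, the epikernel hypothesis does no real work in your proof. Gillespie's theorem already hands you a thick $\mathcal{W}$, and you prove $\mathcal{W}=\omega^\wedge_n$ directly from its explicit description. So thickness of $\omega^\wedge_n$ comes for free, your Step 5 check is redundant, and your argument would show the hypothesis is superfluous.

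\textbf{Comparison of routes.} Once Step 3 is repaired, your route is a valid alternative to the paper's. You use Gillespie's construction and then identify $\mathcal{W}$ explicitly. The paper instead checks directly that $\omega^\wedge_n$ is thick in $\X^\wedge_n$, that $\X^\wedge_m\cap\omega^\wedge_n=\omega^\wedge_m$, and that the fibrant objects meeting $\omega^\wedge_n$ are exactly the trivially fibrant ones.

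\textbf{A minor formal point in Step 2.} Lemma \ref{RK-H-C2}, as stated, requires $(\X^\wedge_m,[\X^\wedge_m]^{\perp_1})$ to be a cotorsion pair in all of $\C$. You have not established this, since preenvelopes are only assumed on $\X^\wedge$. The lemma's proof only uses summand-closure of $\X^\wedge_m$ and the approximations on $\X^\wedge$, so your conclusion survives. The paper avoids the issue by working directly from the cut cotorsion pair of Theorem \ref{Family} (2).
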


\begin{proof}
The idea is to show that $(\omega^\wedge_m, [\omega^\wedge_m]^{\perp_1} \cap \mathcal{X}^\wedge_n)$ and $(\X^\wedge_m, [\omega^{\wedge}_m]^{\perp_1} \cap \X^\perp \cap \mathcal{X}^\wedge_n)$ are compatible and hereditary complete cotorsion pairs in $\mathcal{X}^\wedge_n$. 
\begin{itemize}
\item $(\omega^\wedge_m, [\omega^\wedge_m]^{\perp_1} \cap \mathcal{X}^\wedge_n)$ is a complete cotorsion pair in $\mathcal{X}^\wedge_n$: Since $(\omega^\wedge_m, [\omega^\wedge_m]^{\perp_1})$ is a complete cotorsion pair in $\C$, setting $\mathcal{X} := \mathcal{X}$, $\mathcal{Y} := \omega^\wedge_m$ and $\mathcal{B} := \mathcal{X}^\wedge_n$ in Lemma \ref{RK-H-C2} yields the assertion. Recall that $\mathcal{X}^\wedge_n$ is left thick by Proposition \ref{prop:Frobenius_stable} (2).

\item $(\X^\wedge_m, [\omega^{\wedge}_m]^{\perp_1} \cap \X^\perp \cap \mathcal{X}^\wedge_n)$ is a complete cotorsion pair in $\mathcal{X}^\wedge_n$: By Remark \ref{rmk:Frob_pair_from_aproxs} (1) we have that $(\X,\omega)$ is a Frobenius pair, so by Theorem \ref{Family} (2), $(\X^\wedge_m, [\omega^{\wedge}_m]^{\perp_1} \cap \X^\perp)$ is a cotorsion pair cut along $\mathcal{X}^\wedge$. From this, we can prove our assertion:
\begin{itemize}
\item $\mathcal{X}^\wedge_m = {}^{\perp_1}([\omega^\wedge_m]^{\perp_1} \cap \mathcal{X}^\perp \cap \mathcal{X}^\wedge_n) \cap \mathcal{X}^\wedge_n$: The containment ($\subseteq$) is clear since $\mathcal{X}^\wedge_m = {}^{\perp_1}([\omega^\wedge_m]^{\perp_1} \cap \mathcal{X}^\perp) \cap \mathcal{X}^\wedge$ and $n \geq m$. Now let $M$ be an object in ${}^{\perp_1}([\omega^\wedge_m]^{\perp_1} \cap \mathcal{X}^\perp \cap \mathcal{X}^\wedge_n) \cap \mathcal{X}^\wedge_n$. Since $M \in \mathcal{X}^\wedge$ and $(\X^\wedge_m, [\omega^{\wedge}_m]^{\perp_1} \cap \X^\perp)$ is a cotorsion pair cut along $\mathcal{X}^\wedge$, there is an exact sequence $K \rightarrowtail L \twoheadrightarrow M$ with $L \in \mathcal{X}^\wedge_m$ and $K \in [\omega^\wedge_m]^{\perp_1} \cap \mathcal{X}^\perp$. Now since $\mathcal{X}^\wedge_n$ is left thick and $M \in \mathcal{X}^\wedge_n$ and $L \in \mathcal{X}^\wedge_m \subseteq \mathcal{X}^\wedge_n$, we have that $K \in [\omega^\wedge_m]^{\perp_1} \cap \mathcal{X}^\perp \cap \mathcal{X}^\wedge_n$. On the other hand, $M \in {}^{\perp_1}([\omega^\wedge_m]^{\perp_1} \cap \mathcal{X}^\perp \cap \mathcal{X}^\wedge_n)$, and so the previous sequence splits, which in turn implies that $M \in \mathcal{X}^\wedge_m$.

\item $[\omega^\wedge_m]^{\perp_1} \cap \mathcal{X}^\perp \cap \mathcal{X}^\wedge_n = [\mathcal{X}^\wedge_m]^{\perp_1} \cap \mathcal{X}^\wedge_n$: Since $(\X^\wedge_m, [\omega^{\wedge}_m]^{\perp_1} \cap \X^\perp)$ is a cotorsion pair cut along $\mathcal{X}^\wedge$, we get $[\X^\wedge_m]^{\perp_1} \cap \X^\wedge = [\omega^{\wedge}_m]^{\perp_1} \cap \X^\perp \cap \X^\wedge$. Then, intersection each side of the previous equality with $\X^\wedge_n$ yields the result. 

\item $(\X^\wedge_m, [\omega^{\wedge}_m]^{\perp_1} \cap \X^\perp \cap \mathcal{X}^\wedge_n)$ is complete (in $\mathcal{X}^\wedge_n$): Follows from the facts that $(\X^\wedge_m, [\omega^{\wedge}_m]^{\perp_1} \cap \X^\perp)$ is a cotorsion pair cut along $\mathcal{X}^\wedge$ and that $\mathcal{X}^\wedge_n$ is left thick. 
\end{itemize}

\item Compatibility: The equalities $\omega^\wedge_m = \X^\wedge_m \cap \omega^\wedge_n$ and $[\omega^{\wedge}_m]^{\perp_1} \cap \X^\perp \cap \mathcal{X}^\wedge_n = ([\omega^\wedge_m]^{\perp_1} \cap \mathcal{X}^\wedge_n) \cap \omega^\wedge_n$ follow from Lemma \ref{LemaGorro} (2). Finally, $\omega^\wedge_n$ is thick in $\mathcal{X}^\wedge_n$ by Lemma \ref{GruesaIzq} and the assumption that $\omega^\wedge_n$ is closed under epikernels.

\item Hereditariness: Note that $\omega^\wedge_m = \X^\wedge_m \cap \omega^\wedge_n$ is closed under epikernels by the mentioned properties of $\omega^\wedge_n$ and the fact that $\X^\wedge_m$ is closed under epikernels. So the hereditariness of $(\omega^\wedge_m, [\omega^\wedge_m]^{\perp_1} \cap \mathcal{X}^\wedge_n)$ and $(\X^\wedge_m, [\omega^{\wedge}_m]^{\perp_1} \cap \X^\perp \cap \mathcal{X}^\wedge_n)$ follows from \cite[Lem. 6.17]{Stovi14}.

\item Homotopy category: Follows by \cite[Prop. 5.2, Corolls. 4.8 \& 5.4]{GillespieExact}.
\end{itemize}
\end{proof}

\begin{example}\label{ex:method2} ~\
\begin{enumerate}
\item Consider the hereditary cotorsion pair $({}_R\mathcal{GP},{}_R\mathcal{GP}^\perp)$ in ${}_R\mathsf{Mod}$. By \cite[Thm. 2.8 \& Prop. 6.1]{BMSP}, every left $R$-module in ${}_R\mathcal{GP}^\wedge$ has a special Gorenstein projective precover. We are then under the assumptions of Theorem \ref{thm:models_two_bounds}, and hence for every $n \geq m \geq 0$ there is a hereditary Hovey triple 
\[
({}_R\mathcal{GP}^\wedge_m, {}_R\mathcal{P}^\wedge_n, [{}_R\mathcal{P}^\wedge_m]^{\perp_1} \cap {}_R\mathcal{GP}^\wedge_n)
\]
in the w.i.c. exact subcategory ${}_R\mathcal{GP}^\wedge_n$, obtaining thus \cite[Thm. 6.3 (2)]{Chains}. 

\item Similar to the previous example, if we consider the hereditary and complete cotorsion pair $({}_R\mathcal{PGF},{}_R\mathcal{PGF}^\perp)$ in ${}_R\mathsf{Mod}$, the hypothesis in Theorem \ref{thm:models_two_bounds} are clearly satisfied by this pair, and hence for every $n \geq m \geq 0$ there is a hereditary Hovey triple 
\[
({}_R\mathcal{PGF}^\wedge_m, {}_R\mathcal{P}^\wedge_n, [{}_R\mathcal{P}^\wedge_m]^{\perp_1} \cap {}_R\mathcal{PGF}^\wedge_n)
\]
in the w.i.c. exact subcategory ${}_R\mathcal{PGF}^\wedge_n$, obtaining thus \cite[Coroll. 5.3 (3)]{Chains}. In a similar way, one can obtain such Hovey triples using the hereditary and complete cotorsion pairs from Example \ref{ex:Gorenstein-models-Rmod} (2) or Example \ref{ex:pairs_in_Ch}.
\end{enumerate}
\end{example}

In some cases, it is possible to extend the ambient w.i.c. exact category $\mathcal{X}^\wedge_n$ to a bigger category $\mathcal{Y}^\wedge_n$, provided that the hereditary cotorsion pair $(\X,\X^\perp)$ is somehow compatible with a Frobenius pair of the form $(\Y,\X \cap \X^\perp)$ with $\X \subseteq \Y$.

\begin{theorem}[method 3]\label{thm:models_two_bounds-2}
Let $(\mathcal{X},\mathcal{X}^\perp)$ be a hereditary and complete cotorsion pair in $\C$ (not necessarily with enough projective objects) and $(\mathcal{Y},\omega)$ be a Frobenius pair in $\C$ such that:
\begin{itemize}
\item $\mathcal{X}^\perp$ is thick,
\item $\mathcal{X} \subseteq \mathcal{Y}$, and
\item $\mathcal{X} \cap \mathcal{X}^\perp = \omega$.
\end{itemize}  
If $(\omega^\wedge_m, [\omega^\wedge_m]^{\perp_1})$ is a complete cotorsion pair in $\C$ for some $m \in \mathbb{Z}_{\geq 0}$, then for every $n \geq m$ there is a hereditary Hovey triple $$(\mathcal{X}^\wedge_m,\mathcal{X}^\perp \cap \mathcal{Y}^\wedge_n, [\omega^\wedge_m]^{\perp_1} \cap \mathcal{Y}^\wedge_n)$$ in the w.i.c. exact subcategory $\mathcal{Y}^\wedge_n$. The homotopy category of its associated hereditary abelian model structure is equivalent to the quotient category $$\X^\wedge_m \cap [\omega^\wedge_m]^{\perp_1} / \sim,$$ where for two morphisms $f, g \colon X \to Y$ with $X, Y \in \X^\wedge_m \cap [\omega^\wedge_m]^{\perp_1}$ one has that $f \sim g$ if, and only if, $f - g$ factors through a trivially bifibrant object (that is, and object in $\omega^\wedge_m \cap [\omega^\wedge_m]^{\perp_1}$). 
\end{theorem}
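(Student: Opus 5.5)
The plan mirrors the proof of Theorem \ref{thm:models_two_bounds}. I will show that
\[
(\omega^\wedge_m, [\omega^\wedge_m]^{\perp_1} \cap \mathcal{Y}^\wedge_n) \quad\text{and}\quad (\X^\wedge_m, [\omega^\wedge_m]^{\perp_1} \cap \X^\perp \cap \mathcal{Y}^\wedge_n)
\]
are compatible, hereditary and complete cotorsion pairs in the w.i.c. exact category $\mathcal{Y}^\wedge_n$. Gillespie's correspondence for exact categories then produces the Hovey triple. The homotopy category is handled as before, via \cite[Prop. 5.2, Corolls. 4.8 \& 5.4]{GillespieExact}, since the trivially bifibrant objects are $\X^\wedge_m \cap [\omega^\wedge_m]^{\perp_1}\cap \X^\perp = \omega^\wedge_m \cap [\omega^\wedge_m]^{\perp_1}$ by Lemma \ref{LemaGorro} (2).

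\textbf{Step 1: preliminary facts.} By Proposition \ref{prop:Frobenius_stable} (2), $\mathcal{Y}^\wedge_n$ is left thick. Since $\X \subseteq \mathcal{Y}$, we have $\X^\wedge_m \subseteq \X^\wedge_n \subseteq \mathcal{Y}^\wedge_n \subseteq \mathcal{Y}^\wedge$.

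\textbf{Step 2: the first pair.} Apply Lemma \ref{RK-H-C2} with the Frobenius pair $(\mathcal{Y},\omega)$, taking the class $\omega^\wedge_m \subseteq \mathcal{Y}^\wedge$ and $\mathcal{B}=\mathcal{Y}^\wedge_n$. This gives that $(\omega^\wedge_m,[\omega^\wedge_m]^{\perp_1}\cap\mathcal{Y}^\wedge_n)$ is a complete cotorsion pair in $\mathcal{Y}^\wedge_n$.

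\textbf{Step 3: the second pair.} Since $\X^\perp$ is thick, Theorem \ref{HoveyinC}'s argument (a)$\Rightarrow$(b) shows that $\omega^\wedge_m$ is closed under epikernels. Because $(\omega^\wedge_m,[\omega^\wedge_m]^{\perp_1})$ is complete, the induced pair $(\X,\omega)$ is $\omega^\wedge_m$-complete in $\X^\perp$. To see this, take the approximation sequences of $M\in\X^\perp$; the middle or end terms lie in $\omega^\wedge_m \subseteq \X^\perp$ or in $\X^\perp$ by thickness. Theorem \ref{Family} (1)(i) then makes $(\X^\wedge_m,[\omega^\wedge_m]^{\perp_1}\cap\X^\perp)$ a hereditary complete cotorsion pair in $\C$. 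Theorem \ref{Family} (1)(ii), applied with $\mathcal{B}=\mathcal{Y}^\wedge_n$, yields the second pair in $\mathcal{Y}^\wedge_n$.

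This last step is the main obstacle. Theorem \ref{Family} (1)(ii) requires $\mathcal{B}\subseteq\X^\wedge$, which fails here. I would instead invoke Lemma \ref{RK-H-C2} directly with the roles $\X:=\mathcal{Y}$ and the class $\X^\wedge_m$. This is legitimate because the pair is complete in all of $\C$, so every object of $\mathcal{Y}^\wedge$ has the required approximations. Hereditariness then passes over by part (3) of that lemma.

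\textbf{Step 4: compatibility and thickness.} First, $\X^\wedge_m\cap(\X^\perp\cap\mathcal{Y}^\wedge_n)=\omega^\wedge_m$ by Lemma \ref{LemaGorro} (2). Second, $([\omega^\wedge_m]^{\perp_1}\cap\mathcal{Y}^\wedge_n)\cap(\X^\perp\cap\mathcal{Y}^\wedge_n)$ is exactly the right half of the second pair. Third, $\X^\perp\cap\mathcal{Y}^\wedge_n$ is thick in $\mathcal{Y}^\wedge_n$, since both factors are closed under the relevant operations inside $\mathcal{Y}^\wedge_n$: $\X^\perp$ is thick and $\mathcal{Y}^\wedge_n$ is left thick.

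\textbf{Step 5: hereditariness of the first pair.} The first pair is hereditary in $\mathcal{Y}^\wedge_n$ by \cite[Lem. 6.17]{Stovi14}, because $\omega^\wedge_m$ is closed under epikernels.
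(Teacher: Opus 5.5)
Your proposal is correct and follows essentially the same route as the paper. The paper also restricts the two complete cotorsion pairs in $\C$ to $\mathcal{Y}^\wedge_n$ via Lemma \ref{RK-H-C2}. It takes the ambient class there to be $\C$ instead of $\mathcal{Y}$, which makes no difference, so your replacement of the inapplicable Theorem \ref{Family} (1)(ii) by Lemma \ref{RK-H-C2} is exactly what the paper does. The paper likewise gets hereditariness of $(\omega^\wedge_m,[\omega^\wedge_m]^{\perp_1}\cap\mathcal{Y}^\wedge_n)$ from $\omega^\wedge_m=\mathcal{X}^\wedge_m\cap\mathcal{X}^\perp$ being closed under epikernels, and checks compatibility and the thickness of $\mathcal{X}^\perp\cap\mathcal{Y}^\wedge_n$ as in your Step 4.
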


\begin{proof}
Let us show that $(\mathcal{X}^\wedge_m, [\omega^\wedge_m]^{\perp_1} \cap \mathcal{X}^\perp \cap \mathcal{Y}^\wedge_n)$ and $(\omega^\wedge_m,[\omega^\wedge_m]^{\perp_1} \cap \mathcal{Y}^\wedge_n)$ are compatible hereditary and complete cotorsion pairs in $\mathcal{Y}^\wedge_n$. By Theorem \ref{Family} (1), we first note that $(\mathcal{X}^\wedge_m, [\omega^\wedge_m]^{\perp_1} \cap \mathcal{X}^\perp)$ is a hereditary and complete cotorsion pair in $\C$, and so by Lemma \ref{RK-H-C2} we get that $(\mathcal{X}^\wedge_m, [\omega^\wedge_m]^{\perp_1} \cap \mathcal{X}^\perp \cap \mathcal{Y}^\wedge_n)$ is a hereditary and complete cotorsion pair in $\mathcal{Y}^\wedge_n$ after setting $\mathcal{Y} := \mathcal{X}^\wedge_m$, $\mathcal{X} := \C$ and $\mathcal{B} := \mathcal{Y}^\wedge_n$ in the statement of \ref{RK-H-C2}. Similarly, if we set there $\mathcal{Y} := \omega^\wedge_m$, $\mathcal{X} := \C$ and $\mathcal{B} := \mathcal{Y}^\wedge_n$, we have that $(\omega^\wedge_m,[\omega^\wedge_m]^{\perp_1} \cap \mathcal{Y}^\wedge_n)$ is a complete cotorsion pair in $\mathcal{Y}^\wedge_n$. Moreover, $\omega^\wedge_m = \X^\wedge_m \cap \X^\perp$ is left thick in $\C$ since so are $\X^\wedge_m$ and $\X^\perp$, by Proposition \ref{prop:Frobenius_stable} and the assumption on $\X^\perp$. Now, $\mathcal{Y}^\wedge_n$ is also left thick in $\C$, we have that $\omega^\wedge_m$ is closed under kernels of admissible epimorphisms in $\mathcal{Y}^\wedge_n$, and so by by Lemma \ref{RK-H-C2} we have that $(\omega^\wedge_m,[\omega^\wedge_m]^{\perp_1} \cap \mathcal{Y}^\wedge_n)$ is also hereditary in $\mathcal{Y}^\wedge_n$.

The compatibility of $(\mathcal{X}^\wedge_m, [\omega^\wedge_m]^{\perp_1} \cap \mathcal{X}^\perp \cap \mathcal{Y}^\wedge_n)$ and $(\omega^\wedge_m,[\omega^\wedge_m]^{\perp_1} \cap \mathcal{Y}^\wedge_n)$ is easy to note from Lemma \ref{LemaGorro} (2), so it remains to show that $\mathcal{X}^\perp \cap \mathcal{Y}^\wedge_n$ is thick in $\mathcal{Y}^\wedge_n$. Clearly, $\mathcal{X}^\perp \cap \mathcal{Y}^\wedge_n$ is closed under extensions and direct summands in $\C$, and so the same closure properties carry over to $\mathcal{Y}^\wedge_n$. The remaining closure properties for thickness follow from the assumption that $\mathcal{X}^\perp$ is thick in $\C$. The assertions regarding the homotopy category follow by \cite[Prop. 5.2, Corolls. 4.8 \& 5.4]{GillespieExact}. \\
\end{proof}

\begin{example}\label{ex:Ch-models-containments} ~\
\begin{enumerate}[(1)]
\item Over an arbitrary ring $R$, we know that the containments ${}_R\mathcal{GP}_{\rm AC} \subseteq {}_R\mathcal{PGF}$ and ${}_R\mathcal{DP} \subseteq {}_R\mathcal{GP}$ hold. Moreover, it is shown in \cite[Coroll. 1]{Alina2020} that ${}_R\mathcal{PGF} \subseteq {}_R\mathcal{DP}$, also over any ring $R$. On the one hand, $({}_R\mathcal{GP}_{\rm AC},[{}_R\mathcal{GP}_{\rm AC}]^\perp)$ and $({}_R\mathcal{PGF},[{}_R\mathcal{PGF}]^\perp)$ are hereditary and complete cotorsion pairs in ${}_R\mathsf{Mod}$ with the same kernel $\omega = {}_R\mathcal{P}$, with $[{}_R\mathcal{GP}_{\rm AC}]^\perp$ and $[{}_R\mathcal{PGF}_{\rm AC}]^\perp$ thick. On the other hand, $({}_R\mathcal{DP},{}_R\mathcal{P})$ and $({}_R\mathcal{GP},{}_R\mathcal{P})$ are Frobenius pairs in ${}_R\mathsf{Mod}$. We then obtain from Theorem \ref{thm:models_two_bounds-2} the Hovey triples
\[
([{}_R\mathcal{GP}_{\rm AC}]^\wedge_m, [{}_R\mathcal{GP}_{\rm AC}]^\perp \cap [{}_R\mathcal{PGF}]^\wedge_n, [{}_R\mathcal{P}^\wedge_m]^{\perp_1} \cap [{}_R\mathcal{PGF}]^\wedge_n),
\] 
\[
([{}_R\mathcal{GP}_{\rm AC}]^\wedge_m, [{}_R\mathcal{GP}_{\rm AC}]^\perp \cap [{}_R\mathcal{DP}]^\wedge_n, [{}_R\mathcal{P}^\wedge_m]^{\perp_1} \cap [{}_R\mathcal{DP}]^\wedge_n),
\] 
\[
([{}_R\mathcal{GP}_{\rm AC}]^\wedge_m, [{}_R\mathcal{GP}_{\rm AC}]^\perp \cap [{}_R\mathcal{GP}]^\wedge_n, [{}_R\mathcal{P}^\wedge_m]^{\perp_1} \cap [{}_R\mathcal{GP}]^\wedge_n),
\] 
in the w.i.c. exact subcategories $[{}_R\mathcal{PGF}]^\wedge_n$, $[{}_R\mathcal{DP}]^\wedge_n$ and $[{}_R\mathcal{GP}]^\wedge_n$, respectively, and 
\[
([{}_R\mathcal{PGF}]^\wedge_m, [{}_R\mathcal{PGF}]^\perp \cap [{}_R\mathcal{DP}]^\wedge_n, [{}_R\mathcal{P}^\wedge_m]^{\perp_1} \cap [{}_R\mathcal{DP}]^\wedge_n),
\] 
\[
([{}_R\mathcal{PGF}]^\wedge_m, [{}_R\mathcal{PGF}]^\perp \cap [{}_R\mathcal{GP}]^\wedge_n, [{}_R\mathcal{P}^\wedge_m]^{\perp_1} \cap [{}_R\mathcal{GP}]^\wedge_n),
\] 
in the w.i.c. exact subcategories $[{}_R\mathcal{DP}]^\wedge_n$ and $[{}_R\mathcal{GP}]^\wedge_n$, respectively. In particular, we get \cite[Coroll. 6.1 (3)]{Chains}.

\item Let $(\X,\X^\perp)$ be a projective cotorsion pair in ${}_R\mathsf{Mod}$, and consider the induced projective cotorsion pairs in ${}_R\mathsf{Ch}$ given in Example \ref{ex:pairs_in_Ch}. We have a hereditary exact model structure for each of the following containments:
\[
\widetilde{\X} \stackrel{\scriptsize\textcircled{1}}\subseteq {\rm ex}\,\widetilde{\X}, \, \, {\rm ex}\,\widetilde{\X} \stackrel{\scriptsize\textcircled{2}}\subseteq {\rm dw}\,\widetilde{\X}, \, \, \widetilde{\X} \stackrel{\scriptsize\textcircled{3}}\subseteq {\rm dw}\,\widetilde{\X}, \, \, \widetilde{\X} \stackrel{\scriptsize\textcircled{4}}\subseteq {\rm dg}\,\widetilde{\X} \, \, \text{and} \, \, {\rm dg}\,\widetilde{\X} \stackrel{\scriptsize\textcircled{5}}\subseteq {\rm dw}\,\widetilde{\X}.
\]
Indeed, consider for instance ${\scriptsize\textcircled{1}}$. The hereditary and complete cotorsion pair $(\widetilde{\X},\widetilde{\X}^\perp)$ and the Frobenius pair $({\rm ex}\,\widetilde{\X},\widetilde{{}_R\mathcal{P}})$ in ${}_R\mathsf{Ch}$ satisfy the assumptions of Theorem \ref{thm:models_two_bounds-2}. Then, for every pair of integers $(m,n)$ with $n \geq m \geq 0$ there exists a hereditary Hovey triple
\[
(\widetilde{\X}^\wedge_m,\widetilde{\X}^\perp \cap [{\rm ex}\,\widetilde{\X}]^\wedge_n,[(\widetilde{{}_R\mathcal{P}})^\wedge_m]^\perp \cap [{\rm ex}\,\widetilde{\X}]^\wedge_n)
\]
in the w.i.c. exact subcategory $[{\rm ex}\,\widetilde{\X}]^\wedge_n$.
\end{enumerate} 
\end{example}

We know that there are hereditary and complete cotorsion pairs $(\X,\X^\perp)$ with $\omega = \X \cap \X^\perp$ for which $(\omega^\wedge_m,[\omega^\wedge_m]^{\perp_1})$ is not necessarily a complete cotorsion pair for any $m \in \mathbb{Z}_{\geq 0}$, although its induced Frobenius pair $(\X,\omega)$ is indeed $\omega^\wedge_m$-complete for some $m \in \mathbb{Z}_{\geq 0}$ (see Example \ref{ExGF} (2)). Under certain conditions, it is still possible to obtain hereditary abelian model structures in this situation. The following result specifies these conditions.

\begin{theorem}[method 4]\label{thm:model_two_cotorsion_pairs}
Let $(\mathcal{X},\mathcal{X}^\perp)$ and $(\mathcal{Y},\mathcal{Y}^\perp)$ be hereditary and complete cotorsion pairs in $\C$ such that  $\mathcal{X} \subseteq \mathcal{Y}$ and $\mathcal{X} \cap \mathcal{X}^\perp = \mathcal{Y} \cap \mathcal{Y}^\perp$. Suppose that for $\omega = \mathcal{X} \cap \mathcal{X}^\perp$ there is an integer $m \in \mathbb{Z}_{>0}$ such that the induced Frobenius pairs $(\mathcal{X},\omega)$ and $(\mathcal{Y},\omega)$ are $\omega^\wedge_m$-complete in $\mathcal{X}^\perp$ and $\mathcal{Y}^\perp$, respectively. Then, for every $n \geq m$ there is a hereditary Hovey triple 
\[
(\mathcal{Y}^\wedge_m, \mathcal{X}^\wedge_n, [\mathcal{X}^\wedge_m]^{\perp_1} \cap \mathcal{Y}^\wedge_n)
\] 
in the w.i.c. exact subcategory $\mathcal{Y}^\wedge_n$. The homotopy category of its associated hereditary abelian model structure is equivalent to the quotient category 
\[
\mathcal{Y}^\wedge_m \cap [\mathcal{X}^\wedge_m]^{\perp_1} / \sim,
\] 
where for two morphisms $f, g \colon X \to Y$ with $X, Y \in \mathcal{Y}^\wedge_m \cap [\mathcal{X}^\wedge_m]^{\perp_1}$ one has that $f \sim g$ if, and only if, $f - g$ factors through a trivially bifibrant object (that is, an object in $\mathcal{X}^\wedge_m \cap [\mathcal{X}^\wedge_m]^{\perp_1} = \omega^\wedge_m \cap [\omega^\wedge_m]^{\perp_1}$). 
\end{theorem}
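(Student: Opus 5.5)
The plan is to produce, inside the w.i.c. exact category $\mathcal{Y}^\wedge_n$, two compatible hereditary and complete cotorsion pairs,
\[
(\mathcal{Y}^\wedge_m, [\omega^\wedge_m]^{\perp_1} \cap \mathcal{Y}^\perp \cap \mathcal{Y}^\wedge_n) \quad \text{and} \quad (\mathcal{X}^\wedge_m, [\omega^\wedge_m]^{\perp_1} \cap \mathcal{X}^\perp \cap \mathcal{Y}^\wedge_n),
\]
and then apply Gillespie's correspondence in its exact-category form (\cite{GillespieExact}) with $\mathcal{W} = \mathcal{X}^\wedge_n$.

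\emph{Step 1: the two cotorsion pairs in $\mathcal{Y}^\wedge_n$.} Since both induced Frobenius pairs are $\omega^\wedge_m$-complete, Theorem \ref{Family} (1)-(i) shows that $(\mathcal{X}^\wedge_m,[\omega^\wedge_m]^{\perp_1}\cap\mathcal{X}^\perp)$ and $(\mathcal{Y}^\wedge_m,[\omega^\wedge_m]^{\perp_1}\cap\mathcal{Y}^\perp)$ are hereditary and complete cotorsion pairs in $\C$. This identifies $[\mathcal{X}^\wedge_m]^{\perp_1}$ and $[\mathcal{Y}^\wedge_m]^{\perp_1}$. The class $\mathcal{Y}^\wedge_n$ is left thick by Proposition \ref{prop:Frobenius_stable} (2), and it contains both $\mathcal{X}^\wedge_m$ and $\mathcal{Y}^\wedge_m$ because $\mathcal{X}\subseteq\mathcal{Y}$ and $n\ge m$. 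Hence Lemma \ref{RK-H-C2}, applied with $\C$ in the role of $\mathcal{X}$ and $\mathcal{B}:=\mathcal{Y}^\wedge_n$, restricts both pairs to hereditary and complete cotorsion pairs in $\mathcal{Y}^\wedge_n$, exactly as in the proof of Theorem \ref{thm:models_two_bounds-2}. The containments $\mathcal{X}^\wedge_m\subseteq\mathcal{Y}^\wedge_m$ and, dually, $[\mathcal{Y}^\wedge_m]^{\perp_1}\subseteq[\mathcal{X}^\wedge_m]^{\perp_1}$ are then immediate.

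\emph{Step 2: comparing the two resolution dimensions (the key point).} The Frobenius pairs $(\mathcal{X},\omega)$ and $(\mathcal{Y},\omega)$ share the same $\omega$. By \cite[Thm. 2.10]{BMSP}, $\resdim_{\mathcal{X}}(M)=\pd_\omega(M)$ for $M\in\mathcal{X}^\wedge$, and $\resdim_{\mathcal{Y}}(M)=\pd_\omega(M)$ for $M\in\mathcal{Y}^\wedge$. Since $\mathcal{X}^\wedge\subseteq\mathcal{Y}^\wedge$, for every $M\in\mathcal{X}^\wedge$ we obtain $\resdim_{\mathcal{X}}(M)=\resdim_{\mathcal{Y}}(M)$. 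This yields:
\begin{itemize}
\item[(i)] $\mathcal{Y}^\wedge_m\cap\mathcal{X}^\wedge_n=\mathcal{X}^\wedge_m$, which says that trivially cofibrant objects are precisely $\mathcal{X}^\wedge_m$;
\item[(ii)] $\mathcal{X}^\wedge_n=\mathcal{X}^\wedge\cap\mathcal{Y}^\wedge_n$.
\end{itemize}
From (ii) and the thickness of $\mathcal{X}^\wedge$ in $\C$ (\cite[Thm. 2.11]{BMSP}), $\mathcal{X}^\wedge_n$ is thick in $\mathcal{Y}^\wedge_n$. I expect this step to be the main obstacle: one has to make sure \cite[Thm. 2.10]{BMSP} applies to both pairs and that closure under monocokernels holds within $\mathcal{Y}^\wedge_n$, not merely in $\C$.

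\emph{Step 3: trivially fibrant objects.} By Lemma \ref{LemaGorro} (2) applied to both pairs, $\mathcal{Y}^\perp\cap\mathcal{Y}^\wedge_n=\omega^\wedge_n=\mathcal{X}^\perp\cap\mathcal{X}^\wedge_n$. Using (ii), this gives
\[
[\omega^\wedge_m]^{\perp_1}\cap\mathcal{Y}^\perp\cap\mathcal{Y}^\wedge_n=\big([\omega^\wedge_m]^{\perp_1}\cap\mathcal{X}^\perp\cap\mathcal{Y}^\wedge_n\big)\cap\mathcal{X}^\wedge_n,
\]
that is, the fibrant objects of the first pair are the trivially fibrant ones. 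Together with (i) and Step 1, Gillespie's theorem produces the hereditary Hovey triple $(\mathcal{Y}^\wedge_m,\mathcal{X}^\wedge_n,[\mathcal{X}^\wedge_m]^{\perp_1}\cap\mathcal{Y}^\wedge_n)$.

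For the homotopy category, \cite[Prop. 5.2, Corolls. 4.8 \& 5.4]{GillespieExact} identifies it with $\mathcal{Y}^\wedge_m\cap[\mathcal{X}^\wedge_m]^{\perp_1}/\sim$. The trivially bifibrant objects are $\mathcal{X}^\wedge_m\cap[\mathcal{X}^\wedge_m]^{\perp_1}$, which equals $\omega^\wedge_m\cap[\omega^\wedge_m]^{\perp_1}$ by Lemma \ref{LemaGorro} (2) and the description $[\mathcal{X}^\wedge_m]^{\perp_1}=[\omega^\wedge_m]^{\perp_1}\cap\mathcal{X}^\perp$.
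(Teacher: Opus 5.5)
Your proposal is correct and follows essentially the same route as the paper. You restrict the two hereditary complete cotorsion pairs from Theorem \ref{Family} to $\mathcal{Y}^\wedge_n$ via Lemma \ref{RK-H-C2}, use $\resdim_{\mathcal{X}}(M) = \pd_\omega(M) = \resdim_{\mathcal{Y}}(M)$ from \cite[Thm. 2.10]{BMSP} to get $\mathcal{X}^\wedge_m = \mathcal{X}^\wedge_n \cap \mathcal{Y}^\wedge_m$ and the thickness of $\mathcal{X}^\wedge_n$ in $\mathcal{Y}^\wedge_n$, and match the (trivially) fibrant classes through Lemma \ref{LemaGorro} (2), exactly as the paper does. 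Your identity $\mathcal{X}^\wedge_n = \mathcal{X}^\wedge \cap \mathcal{Y}^\wedge_n$ is just a more explicit form of the paper's appeal to \cite[Thms. 2.10 \& 2.11]{BMSP}. The obstacle you anticipated disappears, since every conflation in $\mathcal{Y}^\wedge_n$ has all three terms in $\mathcal{Y}^\wedge_n$.
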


\begin{proof}
By Theorem \ref{Family}, $(\X^{\wedge}_m, [\omega^{\wedge}_m]^{\perp_1} \cap \X^\perp)$ and $(\mathcal{Y}^{\wedge}_m, [\omega^{\wedge}_m]^{\perp_1} \cap \mathcal{Y}^\perp)$ are hereditary and complete cotorsion pair in $\C$, and so by Lemmas \ref{RK-H-C2} and \ref{LemaGorro} (2) we have that the restrictions 
\[
(\X^{\wedge}_m, [\omega^{\wedge}_m]^{\perp_1} \cap \X^\perp \cap \mathcal{Y}^\wedge_n) \, \, \text{and} \, \, (\mathcal{Y}^{\wedge}_m, [\omega^{\wedge}_m]^{\perp_1} \cap \mathcal{Y}^\perp \cap \mathcal{Y}^\wedge_n) = (\mathcal{Y}^{\wedge}_m, [\omega^{\wedge}_m]^{\perp_1} \cap \omega^\wedge_n)
\] 
are hereditary and complete cotorsion pairs in $\mathcal{Y}^\wedge_n$. 

Let us show that 
\[
\mathcal{X}^\wedge_m = \mathcal{X}^\wedge_n \cap \mathcal{Y}^\wedge_m.
\]
The containment ($\subseteq$) is clear. Now let $M \in \mathcal{X}^\wedge_n \cap \mathcal{Y}^\wedge_m$. Then, by \cite[Thm. 2.10]{BMSP} we have that 
\[
\resdim_{\X}(M) = \pd_{\omega}(M) = \resdim_{\mathcal{Y}}(M) \leq m,
\]
and so $M \in \mathcal{X}^\wedge_m$. On the other hand, again by Lemma \ref{LemaGorro} (2) we obtain
\[
([\omega^{\wedge}_m]^{\perp_1} \cap \X^\perp \cap \mathcal{Y}^\wedge_n) \cap \mathcal{X}^\wedge_n = [\omega^{\wedge}_m]^{\perp_1} \cap \X^\perp \cap \mathcal{X}^\wedge_n = [\omega^{\wedge}_m]^{\perp_1} \cap \omega^\wedge_n.
\]
Finally, it remains to show that $\mathcal{X}^\wedge_n$ is thick in $\mathcal{Y}^\wedge_n$. We already know that $\mathcal{X}^\wedge_n$ is left thick in $\C$, and so left thick in $\mathcal{Y}^\wedge_n$. Right thickness in $\mathcal{Y}^\wedge_n$ follows by \cite[Thms. 2.10 \& 2.11]{BMSP}. 

From the previous, we can note that 
\[
\mathcal{Y}^\wedge_m \cap \mathcal{X}^\wedge_n \cap [\mathcal{X}^\wedge_m]^{\perp_1} \cap \mathcal{Y}^\wedge_n = \mathcal{X}^\wedge_m \cap [\mathcal{X}^\wedge_m]^{\perp_1} = \mathcal{X}^\wedge_m \cap [\omega^{\wedge}_m]^{\perp_1} \cap \X^\perp = \omega^{\wedge}_m \cap [\omega^{\wedge}_m]^{\perp_1},
\]
and hence in the Hovey triple $(\mathcal{Y}^\wedge_m, \mathcal{X}^\wedge_n, [\mathcal{X}^\wedge_m]^{\perp_1} \cap \mathcal{Y}^\wedge_n)$ in $\mathcal{Y}^\wedge_n$, the bifibrant trivial objects in the associated model structure are given by $\omega^{\wedge}_m \cap [\omega^{\wedge}_m]^{\perp_1}$. \\
\end{proof}

\begin{example}\label{ex:method4} ~\
\begin{enumerate}[(1)]
\item Let $({}_R\mathcal{F},{}_R\mathcal{C})$ and $({}_R\mathcal{GF},{}_R\mathcal{GC})$ be the flat and Gorenstein flat (hereditary and complete) cotorsion pairs in ${}_R\mathsf{Mod}$. We know from Example \ref{ExGF} (2) that the induced Frobenius pairs $({}_R\mathcal{F},{}_R\mathcal{F} \cap {}_R\mathcal{C})$ and $({}_R\mathcal{GF},{}_R\mathcal{F} \cap {}_R\mathcal{C})$ are $({}_R\mathcal{F} \cap {}_R\mathcal{C})^\wedge_m$-complete in ${}_R\mathcal{C}$ and ${}_R\mathcal{GC}$, respectively, for every $m \in \mathbb{Z}_{\geq 0}$. By Theorem \ref{thm:model_two_cotorsion_pairs}, we have the hereditary Hovey triple
\[
({}_R\mathcal{GF}^\wedge_m, {}_R\mathcal{F}^\wedge_n, [{}_R\mathcal{F}^\wedge_m]^{\perp_1} \cap {}_R\mathcal{GF}^\wedge_n)
\]
in the w.i.c. exact subcategory ${}_R\mathcal{GF}^\wedge_n$ for every pair of nonnegative integers $(m,n)$ with $n \geq m$, obtained thus the model structure mentioned in \cite[Coroll. 7.7.]{Chains}.

\item Applying Theorem \ref{thm:model_two_cotorsion_pairs} to $({}_R\mathcal{GP}_{\rm AC},[{}_R\mathcal{GP}_{\rm AC}]^\perp)$ and $({}_R\mathcal{PGF},[{}_R\mathcal{PGF}]^\perp)$, we get the hereditary Hovey triple 
\[
([{}_R\mathcal{PGF}]^\wedge_m, [{}_R\mathcal{GP}_{\rm AC}]^\wedge_n, ([{}_R\mathcal{GP}_{\rm AC}]^\wedge_m)^{\perp_1} \cap [{}_R\mathcal{PGF}]^\wedge_n)
\] 
in the w.i.c. exact subcategory $[{}_R\mathcal{PGF}]^\wedge_n$, for every pair of integers $n \geq m \geq 0$.

\item As in Example \ref{ex:Ch-models-containments} (2), we can obtain a hereditary exact model structure for the containments ${\scriptsize\textcircled{1}}$, ${\scriptsize\textcircled{2}}$, ${\scriptsize\textcircled{3}}$, ${\scriptsize\textcircled{4}}$ and ${\scriptsize\textcircled{5}}$. For instance, if we consider ${\scriptsize\textcircled{1}}$, we can apply Theorem \ref{thm:model_two_cotorsion_pairs} to the cotorsion pairs $(\widetilde{\X},\widetilde{\X}^\perp)$ and $({\rm ex}\,\widetilde{\X},[{\rm ex}\,\widetilde{\X}]^\perp)$, since they are projective. Then, for every pair of nonnegative integers $(m,n)$ with $n \geq m \geq 0$ we have the hereditary Hovey triple
\[
([{\rm ex}\,\widetilde{\X}]^\wedge_m, \widetilde{\X}^\wedge_n, [\widetilde{\X}^\wedge_m]^{\perp_1} \cap [{\rm ex}\,\widetilde{\X}]^\wedge_n)
\] 
in $[{\rm ex}\,\widetilde{\X}]^\wedge_n$. 
\end{enumerate}
\end{example}

%%%%%%%%%%%%%%%%%%%%%%%%%%%%%%%%%%%%%%%%%%%%%%%%
%%%%%%%%%%%%%%%%%%%%%%%%%%%%%%%%%%%%%%%%%%%%%%%%

\subsection*{Model structures from generalized Gorenstein modules}

There are other model structures involving Gorenstein flat modules and Gorenstein flat dimensions which are not covered by the methods presented in the previous section, like for instance \cite[Thm. B]{Rachid}. There, the author shows the existence of a hereditary Hovey triple $({}_R\mathcal{GF}^\wedge_m, [{}_R\mathcal{PGF}]^\perp, [{}_R\mathcal{F}^\wedge_m]^\perp)$ in ${}_R\mathsf{Mod}$ for every $m \in \mathbb{Z}_{\geq 0}$. In this section with generalize this and other results from \cite{Rachid} to the class of Gorenstein flat left $R$-modules relative to a class $\mathcal{A} \subseteq \mathsf{Mod}_R$. 

We recall from \cite[Def. 2.1]{EstradaMarco} that a left $R$-module $M \in {}_R\mathsf{Mod}$ is \textbf{(projectively coresolved) Gorenstein $\bm{\mathcal{A}}$-flat} if there exists an exact complex $F_\bullet$ of (projective) flat left $R$-modules such that $A \otimes_R F_\bullet$ is an exact complex of abelian groups for every $A \in \mathcal{A}$. Let us denote by (${}_R\mathcal{PGF}_{\mathcal{A}}$) ${}_R\mathcal{GF}_{\mathcal{A}}$ the class of (projectively coresolved) Gorenstein $\A$-flat left $R$-modules. In particular, if $\A$ is the right half of a complete duality pair $(\mathcal{L,A})$, then ${}_R\mathcal{GF}_{\mathcal{A}}$ is the class of Gorenstein $(\mathcal{L,A})$-flat modules from \cite[Def. 5.1]{Gillespie-DP}. The following is a consequence of \cite{EstradaMarco}.

\begin{example}\label{ex:FP-Gflat-relative}
If ${}_R\mathcal{PGF}_{\mathcal{A}}$ is the left half of a hereditary cotorsion pair in ${}_R\mathsf{Mod}$ cogenerated by a set\footnote{This occurs for instance when $\A$ is \emph{semi-definable} (see \cite[Def. 2.7]{EstradaMarco}).}, then by \cite[Thm. 2.14]{EstradaMarco} we have that ${}_R\mathcal{GF}_{\mathcal{A}}$ is closed under extensions\footnote{In \cite[Thm. 2.14]{EstradaMarco} one needs that $\A$ is semi-definable, but the arguments given in the proof only require that ${}_R\mathcal{PGF}_{\A}$ is the left half of a hereditary and complete cotorsion pair in ${}_R\mathsf{Mod}$.}, and so $({}_R\mathcal{GF}_{\mathcal{A}},[{}_R\mathcal{GF}_{\mathcal{A}}]^\perp)$ is a hereditary cotorsion pair in ${}_R\mathsf{Mod}$ cogenerated by a set (and so complete), with ${}_R\mathcal{GF}_{\mathcal{A}} \cap [{}_R\mathcal{GF}_{\mathcal{A}}]^\perp = {}_R\mathcal{F} \cap {}_R\mathcal{C}$ (see \cite[Coroll. 2.20 \& Prop. 3.1]{EstradaMarco}). It then follows that $({}_R\mathcal{GF}_{\mathcal{A}},{}_R\mathcal{F} \cap {}_R\mathcal{C})$ is a Frobenius pair. Following the arguments in Example \ref{ExGF} (2), we can also note that this Frobenius pair is $({}_R\mathcal{F} \cap {}_R\mathcal{C})^\wedge_m$-complete for every $m \in \mathbb{Z}_{\geq 0}$. 
\end{example}

From the previous example and Proposition \ref{prop:Frobenius_stable} (1), we have that $\resdim_{{}_R\mathcal{GF}_{\A}}(-)$ is stable if ${}_R\mathcal{PGF}_{\mathcal{A}}$ is the left half of a hereditary cotorsion pair in ${}_R\mathsf{Mod}$ cogenerated by a set. The following result characterizes $\resdim_{{}_R\mathcal{GF}_{\A}}(-)$, and it is a simultaneous generalization of \cite[Thm. 2.14]{EstradaMarco} and \cite[Thm. 3.4]{Rachid}. Its proof follows using the arguments given in \cite[Thm. 3.4]{Rachid} but adapted to ${}_R\mathcal{GF}_{\A}$ and ${}_R\mathcal{PGF}_{\A}$\footnote{Keep in mind that ${}_R\mathcal{F}^\wedge_m \subseteq [{}_R\mathcal{PGF}]^\perp \subseteq [{}_R\mathcal{PGF}_{\A}]^\perp$ and that $[{}_R\mathcal{PGF}_{\A}]^\perp$ is thick by \cite[Proof of Thm. 3.2]{EstradaMarco}.}. \\

\begin{proposition} \label{General-Gillespie}
Let $\A \subseteq \mathsf{Mod}_R$ be a class containing the injective right $R$-modules, such that ${}_R\mathcal{PGF}_{\mathcal{A}}$ is the left half of a (hereditary) cotorsion pair in ${}_R\mathsf{Mod}$ cogenerated by a set. Then, for every $M \in {}_R\mathsf{Mod}$ and $m \in \mathbb{Z}_{\geq 0}$, the following statements are equivalent:
\begin{enumerate}[(a)]
\item $\resdim _{\GF _{\A}} (M) \leq m$.

\item There is an exact and $\Hom_R(-,{}_R\mathcal{F}^\wedge_m \cap [{}_R\mathcal{F}^\wedge_m]^\perp)$-exact sequence $ K \rightarrowtail L \twoheadrightarrow M $ of left $R$-modules with $K \in {}_R\mathcal{F}^\wedge_m$ and $L \in {}_R\mathcal{PGF}_{\mathcal{A}}$.

\item $\Ext_R^1(M, E) = 0$ for every $E \in [{}_R\mathcal{PGF}_{\mathcal{A}}]^\perp \cap [{}_R\mathcal{F}^\wedge_m]^\perp$. 

\item There is an exact sequence $M \rightarrowtail F \twoheadrightarrow L$ of left $R$-modules with $F \in {}_R\mathcal{F}^\wedge_m$ and $L \in {}_R\mathcal{PGF}_{\mathcal{A}}$.
\end{enumerate}
Consequently, the equality ${}_R\mathcal{F}^\wedge_m = [{}_R\mathcal{GF}_{\A}]^\wedge_m \cap [{}_R\mathcal{PGF}_{\mathcal{A}}]^\perp$ holds for every $m \in \mathbb{Z}_{\geq 0}$. \\
\end{proposition}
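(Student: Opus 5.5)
We would prove the cycle (a) $\Rightarrow$ (b) $\Rightarrow$ (c) $\Rightarrow$ (a), treat (d) separately, and then deduce the final equality. Throughout, write $\mathcal{G} = {}_R\mathcal{GF}_{\A}$, $\mathcal{P} = {}_R\mathcal{PGF}_{\A}$, $\omega = {}_R\mathcal{F} \cap {}_R\mathcal{C}$ and $\mathcal{E} = \mathcal{P}^\perp \cap [{}_R\mathcal{F}^\wedge_m]^\perp$. The standing facts are:
\begin{itemize}
\item $(\mathcal{P},\mathcal{P}^\perp)$ is hereditary and complete, since it is cogenerated by a set;
\item $\mathcal{P}^\perp$ is thick and contains ${}_R\mathcal{F}^\wedge_m$ (see the footnote);
\item $\mathcal{P} \cap \mathcal{P}^\perp = {}_R\mathcal{P}$;
\item $(\mathcal{G},\omega)$ is a Frobenius pair, $\omega^\wedge_m$-complete (Example \ref{ex:FP-Gflat-relative});
\item $({}_R\mathcal{F}^\wedge_m,[{}_R\mathcal{F}^\wedge_m]^\perp)$ is hereditary and complete.
\end{itemize}

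\textbf{(a) $\Rightarrow$ (b).} For $M \in \mathcal{G}^\wedge_m$ we first need a sequence $K_0 \rightarrowtail G \twoheadrightarrow M$ with $G \in \mathcal{G}$ and $K_0 \in \omega^\wedge_{m-1} \subseteq {}_R\mathcal{F}^\wedge_{m-1}$, which \cite[Thm. 2.8]{BMSP} provides. We then replace $G$ by an object of $\mathcal{P}$. Take a special $\mathcal{P}$-precover $P^\perp \rightarrowtail L \twoheadrightarrow G$ with $L \in \mathcal{P}$. The key input, which we borrow from the proof of \cite[Thm. 2.14]{EstradaMarco} (the Šaroch-type argument), is that for $G \in \mathcal{G}$ this kernel lies in ${}_R\mathcal{F}$. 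Indeed, $\mathcal{G} \cap \mathcal{P}^\perp = {}_R\mathcal{F}$ there, and the kernel lies in $\mathcal{G} \cap \mathcal{P}^\perp$ because $\mathcal{G}$ is closed under epikernels. Pulling back along $L \twoheadrightarrow G \twoheadrightarrow M$ gives $K \rightarrowtail L \twoheadrightarrow M$, where $K$ is an extension of $K_0$ by a flat module; hence $K \in {}_R\mathcal{F}^\wedge_m$. The $\Hom_R(-,{}_R\mathcal{F}^\wedge_m \cap [{}_R\mathcal{F}^\wedge_m]^\perp)$-exactness follows because $\Ext^1(L,E) = 0$ for $E \in {}_R\mathcal{F}^\wedge_m \subseteq \mathcal{P}^\perp$ and $L \in \mathcal{P}$.

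\textbf{(b) $\Rightarrow$ (c).} Let $E \in \mathcal{E}$. Since $L \in \mathcal{P}$, the long exact sequence gives $\Ext^1(M,E) \cong \mathrm{Coker}(\Hom(L,E) \to \Hom(K,E))$. To show this cokernel vanishes, take a special preenvelope $E \rightarrowtail E' \twoheadrightarrow F$ with $E' \in [{}_R\mathcal{F}^\wedge_m]^\perp$ and $F \in {}_R\mathcal{F}^\wedge_m$. Then $E' \in \mathcal{P}^\perp$ by thickness, so $E' \in {}_R\mathcal{F}^\wedge_m \cap [{}_R\mathcal{F}^\wedge_m]^\perp$ and we may apply exactness at $E'$. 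Since $\Ext^1(F,E) = 0$, every $K \to E$ extends through $E'$, and we can lift along the given sequence. This part is routine diagram chasing.

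\textbf{(c) $\Rightarrow$ (a).} This is the main obstacle, because $M$ is not assumed to lie in $\mathcal{G}^\wedge$. We mimic Lemma \ref{Lema01} together with Remark \ref{RK-H-C}, using $(\mathcal{P},\mathcal{P}^\perp)$ in place of $(\mathcal{X},\mathcal{X}^\perp)$. Take $M \rightarrowtail H \twoheadrightarrow P$ with $H \in \mathcal{P}^\perp$ and $P \in \mathcal{P}$. Then $\Ext^1(H,E) = 0$ for all $E \in \mathcal{E}$. Next take a special ${}_R\mathcal{F}^\wedge_m$-precover $K \rightarrowtail L \twoheadrightarrow H$. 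By thickness of $\mathcal{P}^\perp$ we get $K \in \mathcal{E}$, so the sequence splits and $H \in {}_R\mathcal{F}^\wedge_m$. Pulling back a flat presentation of $H$ of length $m$ along $M \rightarrowtail H$, exactly as in the diagram of Lemma \ref{Lema01}, yields $M \in \mathcal{G}^\wedge_m$. Here we use that $\mathcal{G}$ is closed under extensions, since flat modules and $\mathcal{P}$ lie in $\mathcal{G}$. The same construction gives (c) $\Rightarrow$ (d) directly, via $M \rightarrowtail H \twoheadrightarrow P$ with $H$ shown to lie in ${}_R\mathcal{F}^\wedge_m$.

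\textbf{(d) $\Rightarrow$ (a)} holds because ${}_R\mathcal{F}^\wedge_m \subseteq \mathcal{G}^\wedge_m$ and $\mathcal{G}^\wedge_m$ is closed under epikernels (Proposition \ref{prop:Frobenius_stable} (2)).

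\textbf{The final equality.} If $M \in \mathcal{G}^\wedge_m \cap \mathcal{P}^\perp$, then (d) gives $M \rightarrowtail F \twoheadrightarrow L$ with $L \in \mathcal{P}$. This sequence splits, so $M \in {}_R\mathcal{F}^\wedge_m$. The reverse containment is clear.
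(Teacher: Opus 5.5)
Your route is the paper's, which simply adapts the cycle of implications in \cite[Thm.~3.4]{Rachid}. Your steps (c)~$\Rightarrow$~(a), (c)~$\Rightarrow$~(d), (d)~$\Rightarrow$~(a) and the final equality are fine. One small correction there: in the pullback the object is $\mathrm{Ker}(F_0 \twoheadrightarrow P)$, so you need closure of $\mathcal{G}$ under epikernels rather than extensions, which holds since $\mathcal{G}$ is left thick.

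However, (b)~$\Rightarrow$~(c) does not work as written. From $E \in \mathcal{E}$ and $E \rightarrowtail E' \twoheadrightarrow F$ you only get $E' \in \mathcal{E}$, and membership in $\mathcal{P}^\perp$ gives no bound on flat dimension. In fact, since $E$ already lies in $[{}_R\mathcal{F}^\wedge_m]^\perp$, that sequence splits. So $E' \cong E \oplus F$ lies in ${}_R\mathcal{F}^\wedge_m$ only if $E$ does, which fails in general: over $R=\mathbb{Z}$ with $m=0$, $\mathcal{E}$ is the class of cotorsion groups, which contains the non-flat group $\mathbb{Z}/p\mathbb{Z}$.

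The missing idea is to preenvelope $K$ instead of $E$:
\begin{itemize}
\item Take $K \stackrel{\iota}{\rightarrowtail} E_K \twoheadrightarrow F_K$ with $E_K \in [{}_R\mathcal{F}^\wedge_m]^\perp$ and $F_K \in {}_R\mathcal{F}^\wedge_m$. Then $E_K \in {}_R\mathcal{F}^\wedge_m \cap [{}_R\mathcal{F}^\wedge_m]^\perp$, because $K \in {}_R\mathcal{F}^\wedge_m$ and this class is closed under extensions.
\item Any $h \colon K \to E$ factors as $h' \circ \iota$, since $\Ext^1_R(F_K,E)=0$.
\item The hypothesis in (b) extends $\iota$ to some $\tilde{\iota} \colon L \to E_K$, and then $h' \circ \tilde{\iota}$ extends $h$.
\end{itemize}

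The $\Hom$-exactness claimed in (a)~$\Rightarrow$~(b) is also not justified. The vanishing of $\Ext^1_R(L,E)$ only identifies $\mathrm{Coker}(\Hom_R(L,E) \to \Hom_R(K,E))$ with $\Ext^1_R(M,E)$, exactly as you observe in your next step. So what must be shown is $\Ext^1_R(M,E)=0$ for $E \in {}_R\mathcal{F}^\wedge_m \cap [{}_R\mathcal{F}^\wedge_m]^\perp$. This is true but needs an argument. Since $[{}_R\mathcal{F}^\wedge_m]^\perp \subseteq {}_R\mathcal{C}$, Lemma \ref{LemaGorro}~(2) applied to the flat Frobenius pair gives $E \in {}_R\mathcal{F}^\wedge_m \cap {}_R\mathcal{C} = \omega^\wedge_m$. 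Hence $E \in \mathcal{G}^{\perp} \cap [\omega^\wedge_m]^{\perp_1}$, using $\id_{\mathcal{G}}(\omega^\wedge)=0$ and $\omega^\wedge_m \subseteq {}_R\mathcal{F}^\wedge_m$. Lemma \ref{Lema01}, (a)~$\Rightarrow$~(b), applied to $(\mathcal{G},\omega)$ then yields $\Ext^1_R(M,E)=0$. With these two repairs your cycle closes.
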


We can obtain the following cotorsion pair and hereditary abelian model structure involving ${}_R\mathcal{PGF}_{\mathcal{A}}$ and ${}_R\mathcal{GF}_{\mathcal{A}}$ from the previous proposition. This generalizes \cite[Coroll. 2.20 \& Thm. 3.2 (1)]{EstradaMarco} and \cite[Coroll. 3.5 \& Thm. 3.6]{Rachid} simultaneously.

\begin{theorem} \label{GF_A-triple}
Let $\A \subseteq \mathsf{Mod}_R$ be a class containing the injective right $R$-modules, such that ${}_R\mathcal{PGF}_{\mathcal{A}}$ is the left half of a (hereditary) complete cotorsion pair in ${}_R\mathsf{Mod}$. Then, the following assertions hold true for every $m \in \mathbb{Z}_{\geq 0}$:
\begin{enumerate}[(1)]
\item $([{}_R\GF_{\A}]^\wedge_m, [{}_R\mathcal{PGF}_{\A}]^\perp \cap [{}_R\mathcal{F}^\wedge_m] ^\perp)$ is a hereditary complete cotorsion pair in ${}_R\mathsf{Mod}$. 

\item There exists a hereditary Hovey triple in ${}_R\mathsf{Mod}$ given by 
\[
([{}_R\GF_{\A}]^\wedge_m, [{}_R\mathcal{PGF}_{\A}]^\perp, [{}_R\mathcal{F}^\wedge_m]^\perp).
\]
\end{enumerate}
\end{theorem}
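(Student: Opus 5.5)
Part (1) will come from Theorem \ref{Family} (1)-(i) applied to the induced Frobenius pair $({}_R\GF_{\A},{}_R\mathcal{F}\cap{}_R\mathcal{C})$, with Proposition \ref{General-Gillespie} used to rewrite the right half. Part (2) will then follow from Gillespie's correspondence \cite[Main Thm. 1.2]{GillespieHereditary}, using a second pair built from ${}_R\mathcal{PGF}_{\A}$ and ${}_R\mathcal{F}^\wedge_m$.

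For (1), Example \ref{ex:FP-Gflat-relative} shows that $({}_R\GF_{\A},[{}_R\GF_{\A}]^\perp)$ is hereditary and complete with kernel ${}_R\mathcal{F}\cap{}_R\mathcal{C}$. It also shows that the induced Frobenius pair is $({}_R\mathcal{F}\cap{}_R\mathcal{C})^\wedge_m$-complete for every $m$, arguing as in Example \ref{ExGF} (2) with the hereditary perfect pair $({}_R\mathcal{F}^\wedge_m,[{}_R\mathcal{F}^\wedge_m]^\perp)$. That example assumes a set-cogenerated pair; I would either check that its argument only uses completeness (as the footnote about \cite[Thm. 2.14]{EstradaMarco} suggests), or note that the hypotheses of Proposition \ref{General-Gillespie} are met. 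Theorem \ref{Family} (1)-(i) then makes $([{}_R\GF_{\A}]^\wedge_m,[{}_R\GF_{\A}]^\wedge_m{}^{\perp})$ a hereditary complete cotorsion pair. By Proposition \ref{General-Gillespie} (a)$\Leftrightarrow$(c), $[{}_R\GF_{\A}]^\wedge_m={}^{\perp_1}([{}_R\mathcal{PGF}_{\A}]^\perp\cap[{}_R\mathcal{F}^\wedge_m]^\perp)$. Since the class $[{}_R\mathcal{PGF}_{\A}]^\perp\cap[{}_R\mathcal{F}^\wedge_m]^\perp$ is contained in the right half, and the right half lies in $[{}_R\mathcal{F}^\wedge_m]^\perp\cap[{}_R\mathcal{PGF}_{\A}]^\perp$ because ${}_R\mathcal{F}^\wedge_m,{}_R\mathcal{PGF}_{\A}\subseteq[{}_R\GF_{\A}]^\wedge_m$, the two coincide.

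For (2), I take $(\mathcal{X},\mathcal{Y}')=([{}_R\GF_{\A}]^\wedge_m,[{}_R\mathcal{PGF}_{\A}]^\perp\cap[{}_R\mathcal{F}^\wedge_m]^\perp)$ from (1) and $(\mathcal{X}',\mathcal{Y})=({}_R\mathcal{F}^\wedge_m,[{}_R\mathcal{F}^\wedge_m]^\perp)$, which is hereditary and perfect. Clearly $\mathcal{X}'\subseteq\mathcal{X}$ and $\mathcal{Y}'\subseteq\mathcal{Y}$. Compatibility requires $\mathcal{X}\cap\mathcal{Y}'={}_R\mathcal{F}^\wedge_m\cap[{}_R\mathcal{F}^\wedge_m]^\perp$. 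The inclusion $\supseteq$ uses ${}_R\mathcal{F}^\wedge_m\subseteq[{}_R\mathcal{PGF}_{\A}]^\perp$. The inclusion $\subseteq$ is the ``consequently'' equality of Proposition \ref{General-Gillespie}: ${}_R\mathcal{F}^\wedge_m=[{}_R\GF_{\A}]^\wedge_m\cap[{}_R\mathcal{PGF}_{\A}]^\perp$. Gillespie's theorem then gives a thick class $\mathcal{W}$, and it remains to show $\mathcal{W}=[{}_R\mathcal{PGF}_{\A}]^\perp$.

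I would copy the last step of Theorem \ref{HoveyinC}. If $M\in\mathcal{W}$, there is $K'\rightarrowtail L\twoheadrightarrow M$ with $L\in{}_R\mathcal{F}^\wedge_m$ and $K'\in\mathcal{Y}'$. Both end terms lie in the thick class $[{}_R\mathcal{PGF}_{\A}]^\perp$, so $M$ does too. Conversely, for $Y\in[{}_R\mathcal{PGF}_{\A}]^\perp$, take a special preenvelope $Y\rightarrowtail K\twoheadrightarrow L'$ with $K\in[{}_R\mathcal{F}^\wedge_m]^\perp$ and $L'\in{}_R\mathcal{F}^\wedge_m\subseteq[{}_R\mathcal{PGF}_{\A}]^\perp$. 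Closure under extensions gives $K\in\mathcal{Y}'$, so $Y\in\mathcal{W}$.

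The main obstacle is part (1): confirming that the completeness hypotheses behind Example \ref{ex:FP-Gflat-relative} and Proposition \ref{General-Gillespie} hold under the stated assumption of merely a hereditary complete pair. If set-cogeneration is really needed, I would reduce to the case where it holds, or rerun the Gorenstein-flat arguments directly. Everything afterwards is bookkeeping.
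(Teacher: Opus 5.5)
Your proposal is correct and follows the paper's route. Part (1) uses Example \ref{ex:FP-Gflat-relative}, Theorem \ref{Family} (1) and Proposition \ref{General-Gillespie}, and part (2) uses (1) together with the equality ${}_R\mathcal{F}^\wedge_m = [{}_R\GF_{\A}]^\wedge_m \cap [{}_R\mathcal{PGF}_{\A}]^\perp$ and the closure properties of $[{}_R\mathcal{PGF}_{\A}]^\perp$; the paper just cites the thickness of this class, whereas you identify Gillespie's $\mathcal{W}$ with it as in the proof of Theorem \ref{HoveyinC}. The hypothesis mismatch you flag (set-cogenerated versus merely complete) is equally present in the paper's proof, which relies on the same cited results, so it is not a gap specific to your argument.
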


\begin{proof}
Part (1) can be shown following the arguments in \cite[Coroll. 3.5]{Rachid}, and using Example \ref{ex:FP-Gflat-relative}, Theorem \ref{Family} (1) and Proposition \ref{General-Gillespie}. Part (2), on the other hand, is immediate from (1) and Proposition \ref{General-Gillespie} as well, along with the fact that $[{}_R\mathcal{PGF}_{\A}]^\perp$ is thick.
\end{proof}

%%%%%%%%%%%%%%%%%%%%%%%%%%%%%%%%%%%%%
%%%%%%%%%%%%%%%%%%%%%%%%%%%%%%%%%%%%%
%%%%%%%%%%%%%%%%%%%%%%%%%%%%%%%%%%%%%
%%%%%%%%%%%%%%%%%%%%%%%%%%%%%%%%%%%%%

\section{Constructing new complete Frobenius pairs from old ones} \label{sec:applications}

In this last section we show how to induce complete Frobenius pairs from a given complete Frobenius pair. For example, we obtain some complete Frobenius pairs in the category of chain complexes of modules or representations from a complete Frobenius pair of modules. We also look for sufficient conditions that allow us to transfer Frobenius pair and their completeness via functors that form an adjoint pair.

%%%%%%%%%%%%%%%%%%%%%%%%%%%%%%%%%%%%%
%%%%%%%%%%%%%%%%%%%%%%%%%%%%%%%%%%%%%

\subsection*{Completeness in the category of complexes}

Let us commence with induced complete Frobenius pairs in the category ${}_R\mathsf{Ch}$ of chain complexes of left $R$-modules. We shall need the following lemma, whose proof can be obtained by mimicking the arguments in \cite[Thm. 3.1]{AkinciAlizade}.

\begin{lemma}\label{Yang-Liu}
Let $(\X, \omega)$ be a Frobenius pair which is totally left $\omega ^{\wedge} _m$-complete in $\X ^{\perp _1}$, and $M_1 \rightarrowtail M_2 \twoheadrightarrow M_3$ be a short exact sequence with $M_1, M_3 \in \X^{\perp _1}$. If we are given short exact sequences $K_1 \rightarrowtail L_1 \twoheadrightarrow M_1$ and $ K_3 \rightarrowtail L_3 \twoheadrightarrow M_3$ with $L_1, L_3 \in \omega^\wedge_m$ and $K_1, K_3 \in [\omega^\wedge_m]^\perp$, then there exists a short exact sequence $K_2 \rightarrowtail L_2 \twoheadrightarrow M_2$ with $L_2 \in \omega^\wedge_m$ and $K_2 \in [\omega^\wedge_m]^\perp$, such that the following diagram commutes
\[
\begin{tikzpicture}[description/.style={fill=white,inner sep=2pt}] 
\matrix (m) [matrix of math nodes, row sep=2.3em, column sep=2.3em, text height=1.25ex, text depth=0.25ex] 
{ 
K_1 & K_2 & K_3 \\
L_1 & L_2 & L_3 \\
M_1 & M_2 & M_3 \\
}; 
\path[>->]
(m-1-1) edge node[above] {\footnotesize$\alpha_2$} (m-1-2) edge node[left] {\footnotesize$g_1$} (m-2-1)
(m-1-2) edge node[left] {\footnotesize$g_2$} (m-2-2)
(m-1-3) edge node[left] {\footnotesize$g_3$} (m-2-3)
(m-2-1) edge node[below] {\footnotesize$\alpha_1$} (m-2-2)
(m-3-1) edge node[below] {\footnotesize$\alpha$} (m-3-2)
;
\path[->>]
(m-1-2) edge node[above] {\footnotesize$\beta_2$} (m-1-3)
(m-2-2) edge node[below] {\footnotesize$\beta_1$} (m-2-3) edge node[right] {\footnotesize$f_2$} (m-3-2)
(m-2-3) edge node[right] {\footnotesize$f_3$} (m-3-3)
(m-3-2) edge node[below] {\footnotesize$\beta$} (m-3-3)
(m-2-1) edge node[right] {\footnotesize$f_1$} (m-3-1)
;
\end{tikzpicture}
\]
\end{lemma}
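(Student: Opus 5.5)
We would follow the horseshoe-type argument of \cite[Thm. 3.1]{AkinciAlizade}. The plan is to build $L_2$ as an extension of $L_3$ by $L_1$, lifting $f_3$ to $M_2$. Then $K_2$ is obtained from the snake lemma applied to the morphism of short exact sequences $(L_1 \rightarrowtail L_2 \twoheadrightarrow L_3) \to (M_1 \rightarrowtail M_2 \twoheadrightarrow M_3)$.

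\textbf{Step 1 (lifting).} Apply $\Hom_{\C}(L_3,-)$ to $M_1 \rightarrowtail M_2 \twoheadrightarrow M_3$. The obstruction to lifting $f_3 \colon L_3 \to M_3$ through $\beta$ lives in $\Ext^1_{\C}(L_3,M_1)$, and this group need not vanish, since $M_1$ is only in $\X^{\perp_1}$ and $L_3 \in \omega^\wedge_m$ is not in $\X$. So we do not lift directly. Instead, we use the cover $K_1 \rightarrowtail L_1 \twoheadrightarrow M_1$. Since $\Ext^1_{\C}(L_3,K_1)=0$ and $\Ext^2_{\C}(L_3,K_1)=0$ (because $K_1 \in [\omega^\wedge_m]^\perp$), the long exact sequence gives $\Ext^1_{\C}(L_3,L_1) \cong \Ext^1_{\C}(L_3,M_1)$.

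\textbf{Step 2 (building $L_2$).} Pull back $M_1 \rightarrowtail M_2 \twoheadrightarrow M_3$ along $f_3$. This gives an extension $\eta \in \Ext^1_{\C}(L_3,M_1)$, say $M_1 \rightarrowtail P \twoheadrightarrow L_3$, with a map $P \to M_2$. By Step 1, $\eta$ is the image of a unique class $\xi \in \Ext^1_{\C}(L_3,L_1)$, realized by $L_1 \rightarrowtail L_2 \twoheadrightarrow L_3$, together with a morphism of extensions $L_2 \to P$ restricting to $f_1$ on $L_1$. Composing with $P \to M_2$ gives $f_2$, and both squares commute by construction. Then $L_2 \in \omega^\wedge_m$, since $\omega^\wedge_m$ is closed under extensions (it equals $\X^\perp \cap \X^\wedge_m$ by Lemma \ref{LemaGorro} (2), and both pieces are closed under extensions).

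\textbf{Step 3 (kernel).} $f_2$ is epic by the snake lemma, since $f_1$ and $f_3$ are epic. Set $K_2 := \operatorname{Ker} f_2$. The snake lemma then yields a short exact sequence $K_1 \rightarrowtail K_2 \twoheadrightarrow K_3$, with maps $\alpha_2$ and $\beta_2$ compatible with $g_i$. Since $[\omega^\wedge_m]^\perp$ is closed under extensions, $K_2 \in [\omega^\wedge_m]^\perp$.

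The main obstacle is Step 1/2. Identifying $\Ext^1$ precisely is delicate: we need the vanishing of $\Ext^2_{\C}(L_3,K_1)$, which is exactly where \emph{totality} ($K_1 \in [\omega^\wedge_m]^\perp$ rather than just $\perp_1$) is used. We must also check that the constructed morphism of extensions really restricts to $f_1$. For that, I would explicitly use the pushout of $\xi$ along $f_1$ and compare it with $\eta$. If that comparison is awkward, the fallback is to lift the morphism $L_1\oplus L_3 \to M_2$ in the usual horseshoe way after modifying $\xi$.
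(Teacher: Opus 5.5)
Your proposal is correct and is essentially the argument the paper intends: the paper gives no details and simply defers to the horseshoe-type construction of \cite[Thm. 3.1]{AkinciAlizade}, whose key input is exactly the vanishing of $\Ext^2_{\C}(L_3,K_1)$ that you isolate. The compatibility with $f_1$ that you flag as delicate is automatic: the surjection $\Ext^1_{\C}(L_3,L_1)\to\Ext^1_{\C}(L_3,M_1)$ is pushout along $f_1$, so the pushout of $\xi$ along $f_1$, composed with an equivalence of extensions onto $\eta$, is a morphism of extensions with components $f_1$ and $\mathrm{id}_{L_3}$, and no fallback is needed.
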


Regarding Frobenius pairs in the category ${}_R\mathsf{Ch}$ of chain complexes of left $R$-modules, the authors in \cite[Thm. 3.3]{LiangYangConstructions} show that the following assertions are equivalent for any two classes of left $R$-modules $\X, \omega \subseteq {}_R\mathsf{Mod}$:
\begin{enumerate}[(a)]
\item $(\X,\omega)$ is a Frobenius pair in ${}_R\mathsf{Mod}$. 

\item $({\rm dw}\,\widetilde{\X},\widetilde{\omega})$ is a Frobenius pair in ${}_R\mathsf{Ch}$. 

\item $({\rm ex}\,\widetilde{\X},\widetilde{\omega})$ is a Frobenius pair in ${}_R\mathsf{Ch}$.

\item $(\widetilde{\X},\widetilde{\omega})$ is a Frobenius pair in ${}_R\mathsf{Ch}$.
\end{enumerate}

Before checking whether completeness can be transferred from $(\X,\omega)$ to any of the pairs $({\rm dw}\,\widetilde{\X},\widetilde{\omega})$, $({\rm ex}\,\widetilde{\X},\widetilde{\omega})$ or $(\widetilde{\X},\widetilde{\omega})$, let us add another induced Frobenius pair of complexes to the previous list involving the class ${\rm dg}\,\widetilde{\mathcal{X}}$.

\begin{remark}
In this section, by an abuse of notation, ${\rm dg}\,\widetilde{\mathcal{X}}$ will denote the class of complexes $X_\bullet \in {\rm dw}\,\widetilde{\mathcal{X}}$ such that $\mathcal{H}om(X_\bullet, W_\bullet)$ is exact for every $W_\bullet \in \widetilde{\omega}$. Thus, this is not exactly the acyclicity condition defining ${\rm dg}\,\widetilde{\mathcal{X}}$ in the context of Example \ref{ex:pairs_in_Ch}.
\end{remark}

The following is a slight extension of \cite[Lem. 3.1]{GillespieFlat}. In what follows, given $k \in \mathbb{Z}$ and $C$ an object in an abelian category $\mathfrak{C}$, $S^k(C)$ denotes the complex given by $C$ at degree $k$, and zero elsewhere. Similarly, $D^k(C)$ denotes the (exact) complex given by $C$ at degrees $k$ and $k-1$, zero elsewhere, and whose only nonzero differential is the identity on $C$.

\begin{lemma}\label{lem:GilExt}
Let $\mathfrak{C}$ be an abelian category with either enough injective or projective objects. Then, for every object $C$ in $\mathfrak{C}$, and any complex $X_\bullet$ of objects in $\mathfrak{C}$, there are natural isomorphisms for every $i > 0$ and $k \in \mathbb{Z}$:
\begin{enumerate}[(1)]
\item $\Ext^i_{\C}(C,X_k) \cong \Ext^i(D^k(C),X_\bullet)$; 

\item $\Ext^i_{\C}(X_k,C) \cong \Ext^i(X_\bullet, D^{k+1}(C))$.
\end{enumerate}
In the case where $\mathfrak{C}$ has enough injective objects and $Y_\bullet$ is an exact complex of objects in $\mathfrak{C}$, then for every $i > 0$ and $k \in \mathbb{Z}$ there is a natural isomorphism:
\begin{enumerate}[(1)]
\setcounter{enumi}{2}
\item $\Ext^i_{\C}(C,Z_k(Y_\bullet)) \cong \Ext^i(S^k(C),Y_\bullet)$.
\end{enumerate} 
Dually, if $\mathfrak{C}$ has enough projective objects, with $C$ and $Y_\bullet$ as above, then for every $i > 0$ and $k \in \mathbb{Z}$ there is a natural isomorphism:
\begin{enumerate}[(1)]
\setcounter{enumi}{3}
\item $\Ext^i_{\C}(Z_k(Y_\bullet),C) \cong \Ext^i(Y_\bullet,S^k(C))$. \\
\end{enumerate} 
\end{lemma}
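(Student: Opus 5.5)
Parts (1) and (2) reduce to the classical isomorphisms $\Hom_{\C}(C,X_k) \cong \Hom_{\Ch(\C)}(D^k(C),X_\bullet)$ and $\Hom_{\C}(X_k,C) \cong \Hom_{\Ch(\C)}(X_\bullet,D^{k+1}(C))$. They say that $D^k(-) \colon \C \to \Ch(\C)$ is left adjoint to the evaluation functor $(-)_k$, and that $D^{k+1}(-)$ is right adjoint to $(-)_k$. All of these functors are exact. Evaluation sends projective (resp. injective) complexes to projective (resp. injective) objects, and $D^k$ preserves projectives (resp. injectives), since projective complexes are exactly the contractible complexes with projective components, i.e. sums of disks. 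I plan to pass these Hom-isomorphisms to Ext via Yoneda extensions, which works in any abelian category. An exact functor $F$ with an exact adjoint $G$ induces $\Ext^i(FA,B) \cong \Ext^i(A,GB)$, natural in both variables. Concretely, apply $F$ to an $i$-fold extension and then pull back along the counit, and go the other way with $G$ and the unit. The triangle identities, together with the fact that pushouts and pullbacks are preserved by exact functors, show that the two maps are mutually inverse on Yoneda classes. Alternatively, when $\C$ has enough projectives, I would take a projective resolution $P_\bullet \to C$. Applying $D^k$ gives a projective resolution of $D^k(C)$ in $\Ch(\C)$, so $\Hom(D^k(P_\bullet),X_\bullet) \cong \Hom(P_\bullet,X_k)$ and (1) follows. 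The dual argument with injective resolutions handles enough injectives. For (2) with enough projectives, I resolve $X_\bullet$ by projective complexes and evaluate at degree $k$, which is exact and preserves projectives, to get a projective resolution of $X_k$. The remaining cases use injective resolutions and $D^{k+1}$.

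For (3), assume enough injectives and let $Y_\bullet$ be exact. A chain map $S^k(C) \to Y_\bullet$ is the same as a map $C \to Z_k(Y_\bullet)$, which gives the degree-zero statement. The functor $S^k$ is exact but does not preserve projectives, so I would instead take an injective coresolution. Since $\Ch(\C)$ has enough injectives (disks $D^{n+1}(I)$ on injectives), I choose a short exact sequence $Y_\bullet \rightarrowtail I_\bullet \twoheadrightarrow Y'_\bullet$ with $I_\bullet$ injective, hence contractible and exact. Then $Y'_\bullet$ is exact, and so is the induced sequence of cycles $Z_k(Y) \rightarrowtail Z_k(I) \twoheadrightarrow Z_k(Y')$; this follows from the snake lemma and the exactness of $Y_\bullet$, which gives $Z_{k-1}(Y) = B_{k-1}(Y)$. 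The module $Z_k(I_\bullet)$ is a summand of $I_k$ and hence injective, and $\Ext^{\geq 1}(S^k(C),I_\bullet)=0$. I then compare the two long exact sequences, for $\Ext(S^k(C),-)$ applied to the complexes and $\Ext_{\C}(C,-)$ applied to the cycles. Starting from the Hom-level isomorphism and using the five lemma gives the case $i=1$. Dimension shifting along $Y'_\bullet$, which is again exact, gives the higher $i$. Naturality is inherited from the Hom identification. Part (4) is the formal dual: maps $Y_\bullet \to S^k(C)$ correspond to maps $Y_k/B_k(Y) \cong B_{k-1}(Y) = Z_{k-1}(Y) \to C$. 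Here I would check the indexing convention carefully, since the statement presumably intends $Z_k$ to pair with the appropriate shift.

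The main obstacle is checking the cycle-level exactness and injectivity in the cosyzygy step of (3), together with the degree bookkeeping in (4). Everything else is formal adjunction.
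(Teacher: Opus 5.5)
Your proposal is correct. For (1)--(2) your main route differs from the paper's, and for (3) it is the paper's argument in another form.

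\textbf{Parts (1)--(2).} The paper works only with resolutions. Assuming enough injectives, it evaluates an injective coresolution of $X_\bullet$ at degree $k$ for (1), and applies $D^{k+1}$ to an injective coresolution of $C$ for (2). It then invokes the degree-zero isomorphisms of Gillespie's Lemma 3.1 and leaves the projective case as dual; this is exactly your ``alternative''. Your primary argument instead uses that $D^k \dashv (-)_k \dashv D^{k+1}$ are adjunctions between exact functors, and transfers Yoneda extensions via the unit and counit. This is the same principle as \cite{AdamsRieffel}, which the paper itself uses later. It buys (1)--(2) in an arbitrary abelian category, with naturality in both variables for free. It also shows that the hypothesis of enough projectives or injectives is only needed for (3)--(4).

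\textbf{Part (3).} Your one-step dimension shift along $Y_\bullet \rightarrowtail I_\bullet \twoheadrightarrow Y'_\bullet$ is equivalent to the paper's proof. The paper applies $Z_k$ to a whole injective coresolution of $Y_\bullet$. It relies on the same two facts you isolate: cycles of an injective complex are injective, and $Z_k$ is exact on short exact sequences of exact complexes.

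\textbf{Part (4).} Your caution about the indexing is warranted: (4) as printed is false. Your computation $\Hom(Y_\bullet,S^k(C)) \cong \Hom(Y_k/B_k(Y_\bullet),C) \cong \Hom(Z_{k-1}(Y_\bullet),C)$ is right. For a counterexample, take $Y_\bullet = D^k(C')$. Then $Z_k(Y_\bullet)=0$, so the left side of (4) vanishes. But by (1), $\Ext^1(D^k(C'),S^k(C)) \cong \Ext^1_{\C}(C',C)$, which is nonzero for $C'=\mathbb{Z}/2$ and $C=\mathbb{Z}$ in abelian groups.

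The correct statement is $\Ext^i_{\C}(Z_{k-1}(Y_\bullet),C) \cong \Ext^i(Y_\bullet,S^k(C))$, equivalently $\Ext^i_{\C}(Z_k(Y_\bullet),C) \cong \Ext^i(Y_\bullet,S^{k+1}(C))$. The dual of your (3) argument proves it: take $P_\bullet \twoheadrightarrow Y_\bullet$ with $P_\bullet$ projective, note that $Z_{k-1}(P_\bullet)$ is projective and the cycle sequence is exact, and dimension-shift. The paper's ``(4) is dual'' would produce the same shift if carried out. Part (4) is never used later in the paper, so nothing downstream is affected.
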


\begin{proof}
For (1) and (2), let us write the case where $\mathfrak{C}$ has enough injective objects (the other case is dual). 
\begin{enumerate}[(1)]
\item Any injective coresolution $X_\bullet \rightarrowtail E^0_\bullet \to E^1_\bullet \to \cdots$ of $X_\bullet$ gives rise to the injective coresolution $X_k \rightarrowtail E^0_k \to E^1_k \to \cdots$ of $X_k$, since a complex is injective if, and only if, it is exact with injective cycles. By \cite[Lem. 3.1 (1)]{GillespieFlat}, the following complexes of abelian groups are naturally isomorphic:
\[
\Hom(D^k(C),E^0_\bullet) \to \Hom(D^k(C),E^1_\bullet) \to \cdots
\]
and
\[
\Hom_{\C}(C,E^0_k) \to \Hom_{\C}(C,E^1_k) \to \cdots.
\]
It then follows that they have isomorphic cohomology groups, and hence (1) follows.

\item It suffices to note that any injective coresolution $C \rightarrowtail E^0 \to E^1 \to \cdots$ of $C$ gives rise to an injective coresolution $D^{k+1}(C) \rightarrowtail D^{k+1}(E^0) \to \cdots$ of $D^{k+1}(C)$. The rest is similar to (1) using \cite[Lem. 3.1 (2)]{GillespieFlat}.
\end{enumerate}
We now prove (3) (note that (4) is dual). 
\begin{enumerate}[(1)]
\setcounter{enumi}{2}
\item Let $Y_\bullet \rightarrowtail E^0_\bullet \to E^1_\bullet \to \cdots$ be an injective coresolution of $Y_\bullet$. Since $Y_\bullet$ and each $E^i_\bullet$ are exact, the induced sequence $Z_k(Y_\bullet) \rightarrowtail Z_k(E^0_\bullet) \to Z_k(E^1_\bullet) \to \cdots$ is exact, and so an injective coresolution of $Z_k(Y_\bullet)$. The rest is similar to (1) using \cite[Lem. 3.1 (3)]{GillespieFlat}.
\end{enumerate}
\end{proof}

\begin{proposition}
Let $\X, \omega \subseteq {}_R\mathsf{Mod}$ be classes of left $R$-modules such that $0 \in X \cap \omega$, $\omega$ is closed under extensions and $\id_{\X}(\omega) = 0$. Then, $(\X, \omega)$ is a Frobenius pair in ${}_R\mathsf{Mod}$ if, and only if, $({\rm dg}\,\widetilde{\X}, \widetilde{\omega})$ is a Frobenius pair in ${}_R\mathsf{Ch}$. \\
\end{proposition}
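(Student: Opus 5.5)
We verify the four axioms {\sf (FP1)}--{\sf (FP4)} in each direction, using Lemma \ref{lem:GilExt} to move between $\Ext$ in ${}_R\mathsf{Ch}$ and $\Ext_R$. Two standard facts are used throughout. First, $\widetilde{\omega}$ consists of exact complexes with cycles in $\omega$. Second, since $\omega$ is closed under extensions, every $W_\bullet \in \widetilde{\omega}$ is degreewise in $\omega$, and it is a direct sum (indeed a product/extension) of disks $D^k(Z)$ with $Z \in \omega$ only up to extension. Recall also that for $X_\bullet \in {\rm dw}\,\widetilde{\X}$ and $W_\bullet \in \widetilde{\omega}$, the condition $\id_{\X}(\omega)=0$ gives $\Ext^1_{\rm dw}(X_\bullet,W_\bullet)=\Ext^1(X_\bullet,W_\bullet)$. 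Hence, by \cite[Lem. 2.1]{GillespieFlat}-type arguments, $H_{-1}\mathcal{H}om(X_\bullet,W_\bullet)\cong \Ext^1(X_\bullet,W_\bullet)$. It follows that ${\rm dg}\,\widetilde{\X} = {\rm dw}\,\widetilde{\X} \cap {}^{\perp_1}\widetilde{\omega}$, and, after shifting, ${}^{\perp}\widetilde{\omega}$. Establishing this identification carefully is the first step.

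\emph{($\Rightarrow$).} {\sf (FP1)}: closure of ${\rm dw}\,\widetilde{\X} \cap {}^{\perp}\widetilde{\omega}$ under extensions and direct summands is immediate, since ${}^{\perp}\widetilde{\omega}$ is a left orthogonal class. For epikernels, take $A_\bullet \rightarrowtail B_\bullet \twoheadrightarrow C_\bullet$ with $B_\bullet,C_\bullet \in {\rm dg}\,\widetilde{\X}$. Then $A_\bullet$ is degreewise in $\X$, and the long exact sequence gives $\Ext^i(A_\bullet,W_\bullet)=0$ for $i\ge 1$ from $\Ext^{i}(B_\bullet,-)=\Ext^{i+1}(C_\bullet,-)=0$. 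This is why we want the total orthogonal ${}^{\perp}\widetilde{\omega}$, which we get from $\id_{\X}(\omega)=0$ and the shift $\Ext^{i+1}(X_\bullet,W_\bullet)\cong\Ext^1(X_\bullet,W_\bullet[\pm i])$ (shifts of $\widetilde{\omega}$ stay in $\widetilde{\omega}$). {\sf (FP2)}: $\widetilde{\omega}$ is closed under summands because cycles of summands are summands of cycles. {\sf (FP3)}: this holds by the definition and the identification above. {\sf (FP4)}: first, $\widetilde{\omega}\subseteq{\rm dg}\,\widetilde{\X}$ because $\id_{\omega}(\omega)=0$, so $\Ext^1(W'_\bullet,W_\bullet)=0$ by the same $\mathcal{H}om$ argument, applied to contractible-type complexes with cycles in $\omega$. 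Next, for $X_\bullet \in {\rm dg}\,\widetilde{\X}$ take, degreewise, $X_k \rightarrowtail W_k \twoheadrightarrow X'_k$ from {\sf (FP4)} in ${}_R\mathsf{Mod}$. Form $X_\bullet \rightarrowtail \bigoplus_k D^{k+1}(W_k) \twoheadrightarrow X'_\bullet$, the standard embedding into disks, with the finite-degree sums being fine. The middle term lies in $\widetilde{\omega}$, and $X'_\bullet$ is degreewise an extension of $X_{k-1}$ by $X'_k$ (namely $X'_\bullet$ has degree $k$ term $W_{k}\oplus X_{k-1}$ modulo $X_k$), hence lies in ${\rm dw}\,\widetilde{\X}$. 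Finally, $X'_\bullet \in {}^{\perp}\widetilde{\omega}$ by the long exact sequence, since $\Ext^{i}(\widetilde{\omega},\widetilde{\omega})=0$ and $X_\bullet\in{}^\perp\widetilde\omega$.

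\emph{($\Leftarrow$).} For $X\in\X$ we have $D^k(X)\in{\rm dg}\,\widetilde{\X}$, because $\mathcal{H}om(D^k(X),-)$ is always exact. For $W\in\omega$ we have $D^k(W)\in\widetilde{\omega}$. Conversely, if $X_\bullet\in{\rm dg}\,\widetilde{\X}$ then each $X_k\in\X$. These give the transfers as follows. Left thickness of $\X$ follows by applying $D^k$ to sequences in ${}_R\mathsf{Mod}$ and reading off degree $k$. Summand closure of $\omega$ follows by applying $D^k$ and taking cycles. {\sf (FP3)} follows from Lemma \ref{lem:GilExt}(2): $\Ext^i_R(X,W)\cong\Ext^i(D^k(X),D^{k+1}(W))=0$ (this is also a hypothesis). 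For {\sf (FP4)}, apply {\sf (FP4)} in ${}_R\mathsf{Ch}$ to $D^k(X)$, getting $D^k(X)\rightarrowtail W_\bullet\twoheadrightarrow X'_\bullet$. Degree $k$ gives $X\rightarrowtail W_k\twoheadrightarrow X'_k$, with $W_k\in\omega$ by extension closure of $\omega$ and $X'_k\in\X$.

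The main obstacle is the first step: proving rigorously that ${\rm dg}\,\widetilde{\X}={\rm dw}\,\widetilde{\X}\cap{}^{\perp}\widetilde{\omega}$. That is, one must show that exactness of $\mathcal{H}om(X_\bullet,W_\bullet)$ for all $W_\bullet\in\widetilde{\omega}$ matches vanishing of all higher $\Ext$. This needs degreewise splitness of extensions, which comes from $\id_{\X}(\omega)=0$, together with closure of $\widetilde{\omega}$ under shifts.
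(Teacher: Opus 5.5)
\emph{Verdict: the reverse implication is fine, but the forward implication has a genuine gap: your (FP3) rests on a false $\Ext$ shift isomorphism.}

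Your reverse implication is fine. Using disks $D^k(X)$ and $D^k(W)$ is slightly cleaner than the paper's spheres $S^0(X)$: disks are contractible, so $D^k(X)\in{\rm dg}\,\widetilde{\X}$ needs no hypothesis. In the forward implication, the identification ${\rm dg}\,\widetilde{\X}={\rm dw}\,\widetilde{\X}\cap{}^{\perp_1}\widetilde{\omega}$ is also correct. The gap is the upgrade to the total orthogonal via ``$\Ext^{i+1}(X_\bullet,W_\bullet)\cong\Ext^1(X_\bullet,W_\bullet[\pm i])$''. That shift formula holds for morphisms in the homotopy category, not for Yoneda $\Ext$ in the abelian category ${}_R\mathsf{Ch}$; degreewise splitting only controls $\Ext^1$.

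Here is a counterexample. By Lemma \ref{lem:GilExt}(2), $\Ext^i(S^0(A),D^{k+1}(C))\cong\Ext^i_R(A,C)$ for $k=0$ and $0$ otherwise, and shifting a disk gives another disk. Take $R=\mathbb{Z}/4$ and $A=C=\mathbb{Z}/2$. Then $\Ext^2(S^0(A),D^1(C))\neq 0$, while $\Ext^1$ from $S^0(A)$ into either shift $D^0(C)$ or $D^2(C)$ vanishes.

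Your (FP3) is exactly the vanishing of all $\Ext^{i}({\rm dg}\,\widetilde{\X},\widetilde{\omega})$, $i\geq 1$, and you justify it only ``by the definition and the identification above''. So (FP3) is not proved. The same gap affects the epikernel part of (FP1), which uses $\Ext^2(C_\bullet,W_\bullet)=0$. It also affects the case $i=1$ in (FP4): the $\Ext$ long exact sequence only gives $\Ext^1(X'_\bullet,K_\bullet)\cong{\rm CoKer}(\Hom(W_\bullet,K_\bullet)\to\Hom(X_\bullet,K_\bullet))$.

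The missing idea sits in your opening remark that $W_\bullet\in\widetilde{\omega}$ is built from disks ``only up to extension''. In the forward direction $\omega\subseteq\X$, so each sequence $Z_k(W_\bullet)\rightarrowtail W_k\twoheadrightarrow Z_{k-1}(W_\bullet)$ splits, because $\Ext^1_R(Z_{k-1}(W_\bullet),Z_k(W_\bullet))=0$. Hence $W_\bullet\cong\bigoplus_k D^{k+1}(Z_k(W_\bullet))$. This sum is degreewise finite, so it is also a product. Lemma \ref{lem:GilExt}(2) and exactness of products in ${}_R\mathsf{Ch}$ then give $\Ext^i(X_\bullet,W_\bullet)\cong\prod_k\Ext^i_R(X_k,Z_k(W_\bullet))=0$ for every $X_\bullet\in{\rm dw}\,\widetilde{\X}$ and $i\geq 1$. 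This is the fact the paper imports from Liang--Yang for (FP3). It shows even more: $W_\bullet$ is contractible, so $\mathcal{H}om(X_\bullet,W_\bullet)$ is exact for every $X_\bullet$, and ${\rm dg}\,\widetilde{\X}={\rm dw}\,\widetilde{\X}$ under these hypotheses.

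With this in place, your (FP1) and (FP4) go through. Alternatively, follow the paper: it handles (FP1) and (FP4) with the degreewise split short exact sequences of $\mathcal{H}om$-complexes and the two-out-of-three property of exact complexes, so it never needs higher $\Ext$ against ${\rm dg}\,\widetilde{\X}$.

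Two small slips. In (FP4), the degree-$k$ term of $X'_\bullet$ is $(W_k\oplus W_{k-1})/X_k$, an extension of $W_k/X_k$ by $W_{k-1}$; it is not $(W_k\oplus X_{k-1})/X_k$, though it is still in $\X$. In the reverse direction you should record $\omega\subseteq\X$, which follows from $D^k(W)\in\widetilde{\omega}\subseteq{\rm dg}\,\widetilde{\X}$.
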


\begin{proof} Let us suppose first that $(\X, \omega)$ is a Frobenius pair in ${}_R\mathsf{Mod}$.
\begin{itemize}
\item {\sf (FP1)}: Let $X_{\bullet}' \rightarrowtail X_{\bullet} \twoheadrightarrow X_{\bullet}''$ be an exact sequence in ${}_R\mathsf{Ch}$ with $X_{\bullet}'' \in {\rm dg}\,\widetilde{\X}$. We show that $X_{\bullet} \in {\rm dg}\,\widetilde{\X}$ if, and only if, $X_{\bullet}' \in {\rm dg}\,\widetilde{\X}$. Let $W_\bullet \in \widetilde{\omega}$. Then, for each $k \in \mathbb{Z}$ there is a short exact sequence $Z_k(W_\bullet) \rightarrowtail W_k \twoheadrightarrow Z_{k-1}(W_\bullet)$ with $Z_k(W_\bullet), Z_{k-1}(W_\bullet) \in \omega$. Since $\omega$ is closed under extensions by \cite[Prop. 2.7]{BMSP}, we have that $W_k \in \omega$ for every $k \in \mathbb{Z}$. On the other hand, $X''_k \in \X$ for every $k \in \mathbb{Z}$. Then, we obtain the following induced short exact sequence of complexes of abelian groups 
\[
\mathcal{H}om(X''_{\bullet}, W_\bullet) \rightarrowtail \mathcal{H}om(X_{\bullet}, W_\bullet) \twoheadrightarrow \mathcal{H}om(X'_{\bullet}, W_\bullet).
\] 
Finally, since exact complexes form a thick class, and ${\rm dw}\,\widetilde{\X}$ is left thick by \cite[Thm. 3.3]{LiangYangConstructions}, we have that $X_{\bullet} \in {\rm dg}\,\widetilde{\X}$ if, and only if, $X_{\bullet}' \in {\rm dg}\,\widetilde{\X}$. It is also clear from the previous that ${\rm dg}\,\widetilde{\X}$ is closed under direct summands. Therefore, ${\rm dg}\,\widetilde{\X}$ is left thick. 

\item {\sf (FP2)}: $\widetilde{\omega}$ is closed under direct summands by \cite[Thm. 3.3]{LiangYangConstructions}. 

\item {\sf (FP3)}: $\Ext^i(X_\bullet,W_\bullet) = 0$ for every $X_\bullet \in {\rm dg}\,\widetilde{\X}$, $W_\bullet \in \widetilde{\omega}$ and $i > 0$ by \cite[Thm. 3.3]{LiangYangConstructions}, since ${\rm dg}\,\widetilde{\X} \subseteq {\rm dw}\,\widetilde{\X}$.

\item {\sf (FP4)}: Let $X_{\bullet} \in {\rm dg}\,\widetilde{\X} \subseteq {\rm dw}\,\widetilde{\X}$. Since $\widetilde{\omega}$ is a relative cogenerator in ${\rm dw}\,\widetilde{\X}$ by \cite[Thm. 3.3]{LiangYangConstructions},  there is an exact sequence $X_{\bullet} \rightarrowtail W_{\bullet} \twoheadrightarrow X_{\bullet}'$ with $W_{\bullet} \in \widetilde{\omega}$ and $X_{\bullet}' \in {\rm dw}\,\widetilde{\X}$. It remains to show that $\mathcal{H}om(X_{\bullet}', K_{\bullet})$ is exact for every $K_{\bullet} \in \widetilde{\omega}$, in order to conclude that $\widetilde{\omega}$ is a relative cogenerator in ${\rm dg}\,\widetilde{\X}$. This follows from the induced exact sequence 
\[
\mathcal{H}om (X_{\bullet}', K_{\bullet}) \rightarrowtail \mathcal{H}om(W_{\bullet}, K_{\bullet}) \twoheadrightarrow \mathcal{H}om (X_{\bullet}, K_{\bullet}),
\] 
since $\mathcal{H}om (X_{\bullet}, K_{\bullet})$ is exact by definition, and one can note that so is the complex $\mathcal{H}om(W_{\bullet}, K_{\bullet})$ by following the arguments in \cite[proof of Lem. 3.9]{GillespieFlat} (notice that $\id_\omega(\omega) = 0$).
\end{itemize} 

Now suppose that $({\rm dg}\,\widetilde{\X},\widetilde{\omega})$ is a Frobenius pair in ${}_R\mathsf{Ch}$. We verify the definition of Frobenius pair for $(\X,\omega)$:
\begin{itemize}
\item {\sf (FP1)}: Let $X' \rightarrowtail X \twoheadrightarrow X''$ be a short exact sequence in ${}_R\mathsf{Mod}$ with $X'' \in \X$. This induces a short exact sequence $S^0(X') \rightarrowtail S^0(X) \twoheadrightarrow S^0(X'')$ where $S^0(X'') \in {\rm dg}\,\widetilde{\X}$ by following the arguments in \cite[Lem. 3.4]{GillespieFlat} (notice that $\id_{\X}(\omega) = 0$). Note that $S^0(X)$ (resp., $S^0(X')$) belongs to ${\rm dg}\,\widetilde{\X}$ if, and only if, $X$ (resp., $X'$) belongs to $\X$. Then, the claimed equivalence follows. Similarly, one can note that $\X$ is closed under direct summands. 

\item {\sf (FP2)}: Note also that $\omega$ is closed under direct summands, from the same closure property assumed for $\widetilde{\omega}$. Indeed, note that $D^1(W) \in \widetilde{\omega}$ if, and only if, $W \in \omega$.

\item {\sf (FP3)}: Let $X \in \X$ and $W \in \omega$. Then, $S^0(X) \in {\rm dg}\,\widetilde{\X}$ and $D^1(W) \in \widetilde{\omega}$, and so by Lemma \ref{lem:GilExt} we have that $\Ext^i_R(X,W) \cong \Ext^i(S^0(X),D^1(W)) = 0$, since $({\rm dg}\,\widetilde{\X},\widetilde{\omega})$ is a Frobenius pair. Hence, $\id_{\X}(\omega) = 0$. 

\item {\sf (FP4)}: Let $X \in \X$ and consider $S^0(X) \in {\rm dg}\,(\X)$. Since $\widetilde{\omega}$ is an relative cogenerator in ${\rm dg}\,\widetilde{\X}$, there is an exact sequence $S^0(X) \rightarrowtail W_\bullet \twoheadrightarrow X'_\bullet$ where $W_\bullet \in \widetilde{\omega}$ and $X'_\bullet \in {\rm dg}\,\widetilde{\X}$. Then, there is an exact sequence (at degree 0) $X \rightarrowtail W_0 \twoheadrightarrow X'_0$ where $W_0 \in \omega$ and $X'_0 \in \X$. It then follows that $\omega$ is a relative cogenerator in $\X$. 
\end{itemize}
\end{proof}

We are now ready to study how completeness transfers from the Frobenius pair $(\X,\omega)$ of modules to the Frobenius pairs $({\rm dw}\,\widetilde{\X},\widetilde{\omega})$, $({\rm ex}\,\widetilde{\X},\widetilde{\omega})$, $({\rm dg}\,\widetilde{\X},\widetilde{\omega})$ and $(\widetilde{\X},\widetilde{\omega})$ of complexes. Recall that ${}_R\mathcal{I}$ denotes the class of injective left $R$-modules. In what follows, $\mathcal{E}$ denotes the class of exact complexes of left $R$-modules and ${\rm dg}\,\widetilde{{}_R\mathcal{I}}$ the class of dg-injective complexes of left $R$-modules.

\begin{proposition}
Let $(\X,\omega)$ be a Frobenius pair in ${}_R\mathsf{Mod}$, which is totally left $\omega^\wedge_m$-complete in $\X ^{\perp _1}$ and such that $R \in \X$. The following assertions hold:
\begin{enumerate}[(1)]
\item $({\rm dw}\,\widetilde{\X}, \widetilde{\omega})$ is left $\widetilde{\omega}^\wedge_m$-complete in $[{\rm dw}\,\widetilde{\X}]^{\perp_1}$.

\item $({\rm dg}\,\widetilde{\X}, \widetilde{\omega})$ is left $\widetilde{\omega}^\wedge_m$-complete in $[{\rm dg}\,\widetilde{\X}]^{\perp_1}$.
\end{enumerate}
In in addition $\X$ is closed under coproducts, then:
\begin{enumerate}[(1)]
\setcounter{enumi}{2}
\item $(\widetilde{\X}, \widetilde{\omega})$ is left $\widetilde{\omega}^\wedge_m$-complete in $[\widetilde{\X}]^{\perp_1}$. 

\item $({\rm ex}\,\widetilde{\X}, \widetilde{\omega})$ is left $\widetilde{\omega}^\wedge_m$-complete in $[{\rm ex}\,\widetilde{\X}]^{\perp_1}$. \\
\end{enumerate}
\end{proposition}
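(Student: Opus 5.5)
The plan is to reduce each case to the module level using the description of the right orthogonal classes provided by Lemma \ref{lem:GilExt}. Then I would glue module-level approximations of cycles, via Lemma \ref{Yang-Liu}, into approximations of complexes.

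\emph{Step 1 (identifying the complexes to approximate).} Let $M_\bullet \in [{\rm dw}\,\widetilde{\X}]^{\perp_1}$.
\begin{itemize}
\item Since $R \in \X$, the spheres $S^k(R)$ lie in ${\rm dw}\,\widetilde{\X}$. The vanishing of $\Ext^1(S^{k-1}(R),M_\bullet)$ for all $k$ forces every cycle of $M_\bullet$ to be a boundary, so $M_\bullet$ is exact.
\item For each $X \in \X$ we also have $S^k(X) \in {\rm dw}\,\widetilde{\X}$. Lemma \ref{lem:GilExt} (3) then gives $\Ext^1_R(X,Z_k(M_\bullet)) \cong \Ext^1(S^k(X),M_\bullet) = 0$, and Lemma \ref{lem:GilExt} (1) with $D^k(X)$ gives $M_k \in \X^{\perp_1}$.
\end{itemize}
Hence $M_\bullet$ is exact with all cycles in $\X^{\perp_1}$. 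The same argument works for ${\rm dg}\,\widetilde{\X}$, because the relevant spheres and discs belong to ${\rm dg}\,\widetilde{\X}$. For $S^k(X)$ this follows the argument of \cite[Lem. 3.4]{GillespieFlat}, using $\id_{\X}(\omega)=0$.

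\emph{Step 2 (gluing).} Apply the total left completeness of $(\X,\omega)$ to each cycle $Z_k := Z_k(M_\bullet)$, obtaining $K'_k \rightarrowtail L'_k \twoheadrightarrow Z_k$ with $L'_k \in \omega^\wedge_m$ and $K'_k \in [\omega^\wedge_m]^\perp$. Then apply Lemma \ref{Yang-Liu} to each short exact sequence $Z_k \rightarrowtail M_k \twoheadrightarrow Z_{k-1}$, which is legitimate because both end terms lie in $\X^{\perp_1}$. This produces compatible $3\times 3$ diagrams. Splicing them along the cycles yields a short exact sequence of exact complexes $K_\bullet \rightarrowtail L_\bullet \twoheadrightarrow M_\bullet$, where $L_\bullet$ has cycles $L'_k \in \omega^\wedge_m$ and $K_\bullet$ has cycles $K'_k \in [\omega^\wedge_m]^\perp$.

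\emph{Step 3 (checking the classes).} Two facts are needed.
\begin{itemize}
\item $\widetilde{\omega^\wedge_m} \subseteq \widetilde{\omega}^\wedge_m$. I would show this by the same splicing technique: choose $\omega$-resolutions of length $m$ of the cycles and use a horseshoe-type construction. Since $\omega$ is closed under extensions, the middle terms can be taken in $\omega$, which gives an $\widetilde{\omega}$-resolution of $L_\bullet$ of length $m$.
\item $K_\bullet \in [\widetilde{\omega}^\wedge_m]^{\perp_1}$. For $A_\bullet \in \widetilde{\omega}^\wedge_m$, the complex $A_\bullet$ is exact with cycles in $\omega^\wedge_m$, using that $\omega^\wedge_m$ is closed under extensions and the stability in Proposition \ref{prop:Frobenius_stable}. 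Degreewise, $\Ext^1_R(A_k,K_k)=0$, so $\Ext^1(A_\bullet,K_\bullet)$ is computed by the homology of $\mathcal{H}om(A_\bullet,K_\bullet)$. This complex is exact by the argument of \cite[Lem. 3.9]{GillespieFlat}, which needs $\Ext^i$ to vanish for all $i \geq 1$ between the cycles. This is exactly why \emph{total} completeness ($K' \in [\omega^\wedge_m]^\perp$ rather than $[\omega^\wedge_m]^{\perp_1}$) is assumed.
\end{itemize}
This settles (1) and (2).

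\emph{Main obstacle: parts (3) and (4).} For $\widetilde{\X}$ and ${\rm ex}\,\widetilde{\X}$ the right orthogonal need not consist of exact complexes; for instance $[\widetilde{\X}]^{\perp_1}$ is of dg-type. So Step 1 only gives $M_k \in \X^{\perp_1}$ (from the discs $D^k(X)$), and the gluing along cycles is unavailable. Here I would first try the following.
\begin{itemize}
\item Take degreewise approximations $L_k \twoheadrightarrow M_k$ and form the epimorphism $\bigoplus_k D^{k}(L_k) \twoheadrightarrow M_\bullet$. Closure of $\X$ (hence of $\omega$ and $\omega^\wedge_m$) under coproducts is what makes this disc sum lie in $\widetilde{\omega}^\wedge_m$.
\item Then correct the kernel by a pullback/pushout with a special approximation of the kernel, in the style of the proof of Theorem \ref{Family}, so that the kernel becomes $\Ext^1$-orthogonal to $\widetilde{\omega}^\wedge_m$.
\item If this fails, fall back on showing that $M_\bullet$ admits a complex in $\widetilde{\omega^\wedge_m}$ mapping onto it degreewise-split. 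Then verify orthogonality of the kernel again via the $\mathcal{H}om$-exactness criterion.
\end{itemize}
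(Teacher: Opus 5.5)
Your parts (1) and (2) follow the paper's proof. You get exactness from $S^k(R)$ and cycles in $\X^{\perp_1}$ from Lemma \ref{lem:GilExt}, splice the diagrams of Lemma \ref{Yang-Liu} along cycles, and then check $\widetilde{\omega^\wedge_m}=\widetilde{\omega}^\wedge_m$ and the orthogonality of $K_\bullet$ via exactness of $\mathcal{H}om$. Two small corrections: the horseshoe step tacitly needs $\Ext^1_R(W,Z_k)=0$ for $W\in\omega$, which holds since $\id_{\X}(\omega^\wedge)=0$; and the $\mathcal{H}om$-exactness step only uses $\Ext^1$-vanishing, so it is not the reason totality is assumed.

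The genuine gap is that (3) and (4) are not proved. You list attempts, and none of them closes.
\begin{itemize}
\item The disc sum $\bigoplus_k D^k(L_k)\twoheadrightarrow M_\bullet$ does lie in $\widetilde{\omega}^\wedge_m$, but trivially, since it is finite in each degree. Coproduct closure plays no role there, and it need not pass from $\X$ to $\omega$ anyway (e.g.\ flat--cotorsion modules).
\item The whole difficulty is the kernel. A pushout ``correction'' as in Theorem \ref{Family} needs a special $[\widetilde{\omega}^\wedge_m]^{\perp_1}$-preenvelope of that kernel with cokernel in $\widetilde{\omega}^\wedge_m$. In Theorem \ref{Family} this came from right completeness on $\X^\perp$. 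Here the kernel is at best another complex of the same kind as $M_\bullet$, and nothing in the hypotheses (left completeness, for modules only) supplies such preenvelopes, so the step is circular.
\item The fallback is impossible. A degreewise split epimorphism onto $M_\bullet$ from a complex with components in $\omega^\wedge_m$ forces $M_k\in\omega^\wedge_m$ by Lemma \ref{LemaGorro} (1). But $D^k(M)\in[\widetilde{\X}]^{\perp_1}$ for every $M\in\X^{\perp_1}$ by Lemma \ref{lem:GilExt} (2).
\end{itemize}

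The missing idea is to replace the possibly non-exact $M_\bullet$ by an exact complex, using the complete cotorsion pair $(\mathcal{E},{\rm dg}\,\widetilde{{}_R\mathcal{I}})$.
\begin{itemize}
\item Take $E_\bullet\rightarrowtail W_\bullet\twoheadrightarrow M_\bullet$ with $W_\bullet$ exact and $E_\bullet$ dg-injective.
\item For $X_\bullet\in\widetilde{\X}$, the complexes $\mathcal{H}om(X_\bullet,E_\bullet)$ and $\mathcal{H}om(X_\bullet,M_\bullet)$ are exact. The second holds because $M_\bullet\in[\widetilde{\X}]^{\perp_1}$ has components in $\X^{\perp_1}$ and $\widetilde{\X}$ is shift-closed. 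The induced sequence is short exact since $E_\bullet$ has injective components, so $\mathcal{H}om(X_\bullet,W_\bullet)$ is exact.
\item This is where $R\in\X$ and coproduct closure really enter: together they put all projective modules in $\X$. So an augmented projective resolution of any $X\in\X$ lies in $\widetilde{\X}$, and testing against it as in the proof of \cite[Thm. 3.12]{GillespieFlat} gives $Z_k(W_\bullet)\in\X^{\perp_1}$.
\item Part (1) now applies to $W_\bullet$ and yields $K_\bullet\rightarrowtail L_\bullet\twoheadrightarrow W_\bullet$. The kernel $Q_\bullet$ of the composite $L_\bullet\twoheadrightarrow M_\bullet$ is an extension of $E_\bullet$ by $K_\bullet$.
\item $E_\bullet\in[\widetilde{\omega}^\wedge_m]^{\perp_1}$, because complexes in $\widetilde{\omega}^\wedge_m$ are exact and dg-injective complexes are $\Ext^1$-orthogonal to exact ones. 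Hence $Q_\bullet\in[\widetilde{\omega}^\wedge_m]^{\perp_1}$.
\end{itemize}
Part (4) then follows from (3), since $\widetilde{\X}\subseteq{\rm ex}\,\widetilde{\X}$ gives $[{\rm ex}\,\widetilde{\X}]^{\perp_1}\subseteq[\widetilde{\X}]^{\perp_1}$.
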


\begin{proof} ~\
\begin{enumerate}[(1)]
\item Let $Y_{\bullet} \in [{\rm dw}\,\widetilde{\X}]^{\perp _1}$. Then, $\Ext^1(S^k(R),Y_\bullet) = 0$, and thus $Y_\bullet$ is exact by \cite[Coroll. 2.1.7]{EnJen00-2}. This, along with Lemma \ref{lem:GilExt}, implies that $\Ext^1_R(X,Z_k(Y_\bullet)) \cong \Ext^1(S^k(X),Y_\bullet) = 0$ for every $X \in \X$, and so $Y_\bullet$ has cycles in $\X^{\perp_1}$. Hence, $[{\rm dw}\,\widetilde{\X}]^{\perp _1} \subseteq \widetilde{\X^{\perp_1}}$. 

Now for each $k \in \mathbb{Z}$ there is an exact sequence $K_k \rightarrowtail L_k \twoheadrightarrow  Z_k(Y_\bullet)$ with $K_k \in [\omega^\wedge_m]^\perp$ and $L_k \in \omega^\wedge_m$. Then by Lemma \ref{Yang-Liu} we can find an exact sequence $K'_k \rightarrowtail L'_k \twoheadrightarrow Y_k$ with $K'_k \in [\omega^\wedge_m]^\perp$ and $L'_k \in \omega^\wedge_m$ such that the following diagram commutes:
\[
\begin{tikzpicture}[description/.style={fill=white,inner sep=2pt}] 
\matrix (m) [matrix of math nodes, row sep=2.3em, column sep=2.3em, text height=1.25ex, text depth=0.25ex] 
{ 
K_k & K'_k & K_{k-1} \\
L_k & L'_k & L_{k-1} \\
Z_k(Y_\bullet) & Y_k & Z_{k-1}(Y_\bullet) \\
}; 
\path[>->]
(m-1-1) edge (m-1-2) edge (m-2-1)
(m-1-2) edge (m-2-2)
(m-1-3) edge (m-2-3)
(m-2-1) edge (m-2-2)
(m-3-1) edge (m-3-2)
;
\path[->>]
(m-1-2) edge (m-1-3)
(m-2-2) edge (m-2-3) edge (m-3-2)
(m-2-3) edge (m-3-3)
(m-3-2) edge (m-3-3)
(m-2-1) edge (m-3-1)
;
\end{tikzpicture}
\]
The upper and middle rows give rise to exact complexes $K'_\bullet$ and $L'_\bullet$, and an exact sequence $K'_\bullet \rightarrowtail L'_\bullet \twoheadrightarrow Y_\bullet$. At this point, the idea is to show that $L'_\bullet \in \widetilde{\omega}^\wedge_m$ and $K'_\bullet \in [\widetilde{\omega}^\wedge_m]^{\perp_1}$. To that end, we need to prove some previous assertions:
\begin{enumerate}[(i)]
\item $\widetilde\omega^\wedge_m = \widetilde{\omega^\wedge_m}$: First, let $L'_\bullet \in \widetilde{\omega}^\wedge_m$. Then, we have an exact sequence
\[
W^m_\bullet \rightarrowtail W^{m-1}_\bullet \to \cdots \to W_\bullet^1 \to W_\bullet^0  \twoheadrightarrow L'_\bullet,
\] 
where $W^j_\bullet \in \widetilde{\omega}$ for every $0 \leq j \leq m$. Since exact complexes form a thick class, we have that $L'_\bullet$ is exact. Then, the previous sequence gives rise to the following exact sequences on cycles
\[
Z_k(W^m_\bullet) \rightarrowtail Z_k(W^{m-1}_\bullet) \to \cdots \to Z_k(W_\bullet^1) \to Z_k(W_\bullet^0)  \twoheadrightarrow Z_k(L'_\bullet),
\]
where $Z_k(W^j_\bullet) \in \omega$ for every $0 \leq j \leq m$, and so $Z_k(L'_\bullet) \in \omega^\wedge_m$. Hence, $L'_\bullet \in \widetilde{\omega^\wedge_m}$. 
 
Now suppose that $L'_\bullet \in \widetilde{\omega^\wedge_m}$. Then $L'_\bullet$ is exact and $Z_k(L'_\bullet) \in \omega^\wedge_m$ for every $k \in \mathbb{Z}$. In particular, for each $k \in \mathbb{Z}$ we have an exact sequence
\[
W^m_k \rightarrowtail W^{m-1}_k \to \cdots \to W^1_k \to W^0_k \twoheadrightarrow Z_k(L'_\bullet),
\]
with $W^j_k \in \omega$ for every $0 \leq j \leq m$. On the other hand, for each exact sequence 
\[
\varepsilon_k \colon Z_k(L'_\bullet) \rightarrowtail L'_k \twoheadrightarrow Z_{k-1}(L'_\bullet)
\] 
note that $\Hom_R(W,\varepsilon_k)$ is exact for every $W \in \omega$, since $\id_{\X}(\omega^\wedge) = 0$. Recall also that $\omega$ is closed under extensions and direct summands, and so it is closed under finite direct sums. Then, by \cite[dual of Lem. 1.7]{Holm04} we can construct the following commutative diagram with exact rows and columns:
\[
\begin{tikzpicture}[description/.style={fill=white,inner sep=2pt}] 
\matrix (m) [matrix of math nodes, row sep=2.3em, column sep=2.3em, text height=1.25ex, text depth=0.25ex] 
{ 
W^m_k & Q^m_k & W^m_{k-1} \\
W^{m-1}_k & W^{m-1}_k \oplus W^{m-1}_{k-1} & W^{m-1}_{k-1} \\
\vdots & \vdots & \vdots \\
W^1_k & W^1_k \oplus W^1_{k-1} & W^1_{k-1} \\
W^0_k & W^0_k \oplus W^0_{k-1} & W^0_{k-1} \\
Z_k(L'_\bullet) & L'_k & Z_{k-1}(L'_\bullet) \\
}; 
\path[->]
(m-2-1) edge (m-3-1) (m-2-2) edge (m-3-2) (m-2-3) edge (m-3-3)
(m-3-1) edge (m-4-1) (m-3-2) edge (m-4-2) (m-3-3) edge (m-4-3)
(m-4-1) edge (m-5-1) (m-4-2) edge (m-5-2) (m-4-3) edge (m-5-3)
;
\path[>->]
(m-1-1) edge (m-1-2) edge (m-2-1)
(m-1-2) edge (m-2-2)
(m-1-3) edge (m-2-3)
(m-2-1) edge (m-2-2)
(m-4-1) edge (m-4-2)
(m-6-1) edge (m-6-2)
;
\path[->>]
(m-1-2) edge (m-1-3)
(m-2-2) edge (m-2-3)
(m-5-2) edge (m-5-3) edge (m-6-2)
(m-5-3) edge (m-6-3)
(m-4-2) edge (m-4-3)
(m-5-1) edge (m-6-1) edge (m-5-2)
(m-6-2) edge (m-6-3)
;
\end{tikzpicture}
\]
This implies that there is an exact sequence
\[
W^m_\bullet \rightarrowtail W^{m-1}_\bullet \to \cdots \to W^1_\bullet \to W^0_\bullet \twoheadrightarrow L'_\bullet,
\]
with $W^j_\bullet \in \widetilde{\omega}$ for every $0 \leq j \leq m$. Therefore, $L'_\bullet \in \widetilde{\omega}^\wedge_m$.

\item $\widetilde{[\omega^\wedge_m]^\perp} \subseteq [\widetilde{\omega^\wedge_m}]^{\perp_1}$: Let $K'_\bullet \in \widetilde{[\omega^\wedge_m]^\perp}$ and $L_\bullet \in \widetilde{\omega}^\wedge_m = \widetilde{\omega^\wedge_m}$. Recall that $\omega^\wedge_m$ is closed under extensions by Lemma \ref{LemaGorro} (2), and so $L_\bullet \in {\rm dw}\,\widetilde{\omega^\wedge_m}$. We show that $\mathcal{H}{\rm om}(L_\bullet,K'_\bullet)$ is exact. Let $Z(K'_\bullet)$ denote the complex formed by the cycles of $K'_\bullet$ with differentials given by the zero morphisms at each degree. Note that $Z(K'_\bullet)$ is a subcomplex of $K'_\bullet$ and that $Z(K'_\bullet) = \oplus_{k \in \mathbb{Z}} S^k(Z_k(K'_\bullet))$. Then, we have a short exact sequence 
\[
Z(K'_\bullet) \rightarrowtail K'_\bullet \twoheadrightarrow K'_\bullet / Z(K'_\bullet),
\] 
where $K'_\bullet / Z(K'_\bullet) = \oplus_{k \in \mathbb{Z}} S^k(Z_{k-1}(K'_\bullet))$. Since $Z_k(K'_\bullet) \in [\omega^\wedge_m]^\perp$ for every $k \in \mathbb{Z}$, we can note that $\mathcal{H}{\rm om}(L_\bullet,Z(K'_\bullet))$ and $\mathcal{H}{\rm om}(L_\bullet, K'_\bullet / Z(K'_\bullet))$ are exact. Moreover, since $\Ext^1_R(L_k,Z_{n+k}(K'_\bullet)) = 0$ for every $k, n \in \mathbb{Z}$, we have the induced exact sequence
\[
\mathcal{H}{\rm om}(L_\bullet,Z(K'_\bullet)) \rightarrowtail \mathcal{H}{\rm om}(L_\bullet,K'_\bullet) \twoheadrightarrow \mathcal{H}{\rm om}(L_\bullet, K'_\bullet / Z(K'_\bullet))
\]
where the left and right terms are exact complexes. Thus, $\mathcal{H}{\rm om}(L_\bullet,K'_\bullet)$ is exact. Then, by \cite[Lem. 5.38 (1)]{HMP-n-cot}, we have that $\Ext^1(L_\bullet,K'_\bullet) = 0$.
\end{enumerate}

Now let $K'_\bullet \rightarrowtail L'_\bullet \twoheadrightarrow Y_\bullet$ be the short exact sequence obtained right before Claim (i). Here, $L'_\bullet$ is an exact complex with cycles in $\omega^\wedge_m$, and so $L'_\bullet \in \widetilde{\omega}^\wedge_m$. Also, $K'_\bullet$ is an exact complex with cycles in $[\omega^\wedge_m]^\perp$, and so $K' \in [\widetilde{\omega^\wedge_m}]^{\perp_1}$. Therefore, the Frobenius pair $({\rm dw}\,\widetilde{\X}, \widetilde{\omega})$ is $\widetilde{\omega}^\wedge_m$-complete in $[{\rm dw}\,\widetilde{\X}]^{\perp_1}$. 

\item Follows from part (1) and the containment $({\rm dg}\,\widetilde{\X})^{\perp_1} \subseteq \widetilde{\X^{\perp_1}}$. The proof of the latter containment is similar to the proof of $[{\rm dw}\,\widetilde{\X}]^{\perp _1} \subseteq \widetilde{\X^{\perp_1}}$.

\item Let $Y_{\bullet} \in [\widetilde{\X}]^{\perp _1}$. We know from \cite[Prop. 2.3.25]{GRcomplexes} that $(\mathcal{E}, {\rm dg}\,\widetilde{{}_R\mathcal{I}})$ is a hereditary and perfect cotorsion pair in ${}_R\mathsf{Ch}$. In particular, there is a short exact sequence $\theta \colon E_\bullet \rightarrowtail W_\bullet \twoheadrightarrow Y_\bullet$ where $E_\bullet$ is dg-injective and $W_\bullet$ is exact. It follows from the definition of dg-injective complexes and from \cite[dual of Lem. 5.38 (3)]{HMP-n-cot} that $E_\bullet, Y_\bullet \in {\rm dw}\widetilde{\X^{\perp_1}}$ and that $\mathcal{H}{\rm om}(X_\bullet,E_\bullet)$ and $\mathcal{H}{\rm om}(X_\bullet,Y_\bullet)$ are exact for every $X_\bullet \in \widetilde{\X}$. Since $\Ext^1_R(X_k,E_{n+k}) = 0$ for every $k, n \in \mathbb{Z}$, the exact sequence $\theta$ induces the exact sequence of complexes
\[
\mathcal{H}{\rm om}(X_\bullet,E_\bullet) \rightarrowtail \mathcal{H}{\rm om}(X_\bullet, W_\bullet) \twoheadrightarrow \mathcal{H}{\rm om}(X_\bullet,Y_\bullet)
\]
where the exactness of the left and right terms implies that $\mathcal{H}{\rm om}(X_\bullet, W_\bullet)$ is exact. Note that $\X$ contains the projective left $R$-modules, since it is closed under direct summands and the free left $R$-modules are in $\X$ since we are assuming that $R \in \X$ and that $\X$ is closed under coproducts. Then, for every $X \in \X$, any projective resolution of $P_\bullet \twoheadrightarrow X$ is a complex in $\widetilde{\X}$, since $\X$ is left thick and projectives are in $\X$, and so $\mathcal{H}{\rm om}(P_\bullet \twoheadrightarrow X,W_\bullet)$ is exact. At this point, we can mimic \cite[proof of Thm. 3.12]{GillespieFlat} in order to show that $Z_k(W_\bullet) \in \X^{\perp_1}$ for every $k \in \mathbb{Z}$. Hence, $W_\bullet \in \widetilde{\X^{\perp_1}}$. 

Now from the proof of part (1), we know that we can obtain an exact sequence $K_\bullet \rightarrowtail L_\bullet \twoheadrightarrow W_\bullet$ where $L_\bullet \in \widetilde{\omega}^\wedge_m$ and $K_\bullet \in [\widetilde{\omega}^\wedge_m]^{\perp_1}$. Now form the following pullback diagram:
\[
\begin{tikzpicture}[description/.style={fill=white,inner sep=2pt}] 
\matrix (m) [matrix of math nodes, row sep=2.3em, column sep=2.3em, text height=1.25ex, text depth=0.25ex] 
{ 
K_\bullet & K_\bullet & {} \\
Q_\bullet & L_\bullet & Y_\bullet \\
E_\bullet & W_\bullet & Y_\bullet \\
}; 
\path[->] 
(m-2-1)-- node[pos=0.5] {\footnotesize$\mbox{\bf pb}$} (m-3-2) 
;
\path[>->]
(m-1-1) edge (m-2-1) (m-1-2) edge (m-2-2)
(m-2-1) edge (m-2-2)
(m-3-1) edge (m-3-2)
;
\path[->>]
(m-2-2) edge (m-2-3) edge (m-3-2)
(m-2-1) edge (m-3-1)
(m-3-2) edge (m-3-3)
;
\path[-,font=\scriptsize]
(m-1-1) edge [double, thick, double distance=2pt] (m-1-2)
(m-2-3) edge [double, thick, double distance=2pt] (m-3-3)
;
\end{tikzpicture}
\]
The proof will conclude after showing that $Q_\bullet \in [\widetilde{\omega}^\wedge_m]^{\perp_1}$, for which it suffices to note that $E_\bullet \in [\widetilde{\omega}^\wedge_m]^{\perp_1}$. For, let $L'_\bullet \in \widetilde{\omega}^\wedge_m = \widetilde{\omega^\wedge_m}$. In particular, $L'_\bullet$ is exact. On the other hand, since $E_\bullet$ has injective components, every short exact sequence in $\Ext^1(L'_\bullet, E_\bullet)$ splits at each degree. Then by \cite[Lem. 2.1]{GillespieFlat} we have that $\Ext^1(L'_\bullet, E_\bullet) \cong H_{-1}(\mathcal{H}{\rm om}(L'_\bullet,E_\bullet))$. Note also that $\mathcal{H}{\rm om}(L'_\bullet,E_\bullet)$ is exact since $L'_\bullet$ is exact and $E_\bullet$ is dg-injective. Hence, $\Ext^1(L'_\bullet, E_\bullet) = 0$. 
 
\item Follows from part (3) and the containment $[{\rm ex}\widetilde{\X}]^{\perp_1} \subseteq [\widetilde{\X}]^{\perp_1}$. 
\end{enumerate}
\end{proof}

%%%%%%%%%%%%%%%%%%%%%%%%%%%%%%%%%%%%%
%%%%%%%%%%%%%%%%%%%%%%%%%%%%%%%%%%%%%

\subsection*{Completeness of Frobenius pairs in categories of quiver representations}

Let $Q = (Q_0,Q_1)$ be a quiver, where $Q_0$ denotes its set of vertices, and $Q_1$ its set of arrows. Given an arrow $a \in Q_1$, we denote by $s(a)$ and $t(a)$ its source and target. Note that a quiver $Q$ can be regarded as a category in which the objects are given by $Q_0$ and the morphisms are the paths of $Q$. The identities are given by the paths of length zero at every vertex. We denote by $\mathrm{Rep}(Q,{}_R\mathsf{Mod})$ the (abelian) category of all ${}_R\mathsf{Mod}$-valued representations of $Q$. Here, a \emph{representation} $X$ of $Q$ is a covariant functor $X \colon Q \to {}_R\mathsf{Mod}$, and a morphism of representations $X \to Y$ is a natural transformation. 

For the rest of this section, $Q$ will be a \emph{left rooted} quiver, that is, it contains no paths of the form $\cdots \to \bullet \to \bullet \to \bullet$. 

For a vertex $i \in Q_0$, we set $Q_1^{\ast \to i} := \{ a \in Q_1 \, \text{:} \, t(a) = i \}$. Using the universal property of coproducts, we have for every $i \in Q_0$ a unique morphism 
\[
\phi_i^X \colon \coprod_{a \in Q_1^{\ast \to i }} X(s(a)) \to X(i)
\]
such that $\phi_i^X \circ \mu_a = X(a)$, where $\mu_a \colon X(s(a)) \rightarrowtail \coprod_{a \in Q_1^{\ast \to i }} X(s(a))$ are the canonical coproduct morphisms. Let $C_i(X) := {\rm CoKer}(\phi_i^X)$ denote the cokernel of this morphism. For any $\X \subseteq {}_R\mathsf{Mod}$, we have the following full subcategories of ${\rm Rep}(Q,{}_R\mathsf{Mod})$:
\begin{align*}
{\rm Rep}(Q,\X) & := \{ X \in {\rm Rep}(Q,{}_R\mathsf{Mod}) \, \text{:} X(i) \in \X, \, \text{for every} \, i \in Q_0 \}, \\
\Phi(\X) & := \{ X \in {\rm Rep}(Q,{}_R\mathsf{Mod}) \, \text{:} \, \phi^X_i \, \text{is monic and} \, C_i(X) \in \X, \, \text{for every} \, i \in Q_0 \}.
\end{align*}

Induced Frobenius pairs in ${\rm Rep}(Q,{}_R\mathsf{Mod})$ involving the previous classes have been previously studied by Liang and Yang in \cite[\S 4]{LiangYangConstructions}. The following is an immediate consequence of \cite[Thm. 4.5]{LiangYangConstructions} and Corollary \ref{coro:inducedXmFP}. \\

\begin{proposition}\label{coro:Rep_omega_complete}
Let $(\X,\omega)$ be a Frobenius pair in ${}_R\mathsf{Mod}$ such that $(\omega^{\wedge}_m,[\omega^{\wedge}_m]^{\perp_1})$ is a complete cotorsion pair in ${}_R\mathsf{Mod}$ (in particular, the Frobenius pair is $\omega^{\wedge}_m$-complete in $\X^{\perp_1}$). Then $(\Phi(\X^\wedge_m), \Phi(\omega^\wedge_m) \cap {\rm Rep}(Q,[\omega^\wedge_m]^{\perp_1}))$ is a Frobenius pair in ${\rm Rep}(Q,{}_R\mathsf{Mod})$.  
\end{proposition}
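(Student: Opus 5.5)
The plan is to combine two earlier facts: Corollary \ref{coro:inducedXmFP} produces a Frobenius pair of modules, and \cite[Thm. 4.5]{LiangYangConstructions} transfers Frobenius pairs of modules to representations of the left rooted quiver $Q$.

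First we check that $(\X,\omega)$ is $\omega^\wedge_m$-complete in $\X^{\perp_1}$, as the parenthetical remark in the statement claims. Let $M \in \X^{\perp_1}$. Completeness of $(\omega^\wedge_m,[\omega^\wedge_m]^{\perp_1})$ gives exact sequences $K' \rightarrowtail L \twoheadrightarrow M$ and $M \rightarrowtail K \twoheadrightarrow L'$ with $L, L' \in \omega^\wedge_m$ and $K, K' \in [\omega^\wedge_m]^{\perp_1}$. These are exactly conditions {\sf (LC)} and {\sf (RC)} of Definition \ref{Def-w-complete}. The definition places no requirement that $K, K'$ lie in $\X^{\perp_1}$, so nothing further is needed.

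Applying Corollary \ref{coro:inducedXmFP}, we obtain the Frobenius pair $(\X^\wedge_m, \omega^\wedge_m \cap [\omega^\wedge_m]^{\perp_1})$ in ${}_R\mathsf{Mod}$. Write $\X' := \X^\wedge_m$ and $\omega' := \omega^\wedge_m \cap [\omega^\wedge_m]^{\perp_1}$. We then invoke \cite[Thm. 4.5]{LiangYangConstructions}. Our expectation is that it states: if $(\X',\omega')$ is a Frobenius pair in ${}_R\mathsf{Mod}$ and $Q$ is left rooted, then $(\Phi(\X'), \Phi(\omega') \cap {\rm Rep}(Q,\omega'^{\perp}))$, or an analogous pair, is a Frobenius pair in ${\rm Rep}(Q,{}_R\mathsf{Mod})$. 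The remaining task is to rewrite the second class as $\Phi(\omega^\wedge_m) \cap {\rm Rep}(Q,[\omega^\wedge_m]^{\perp_1})$.

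This identification is the main obstacle, because it depends on the exact form of the cogenerator class in \cite[Thm. 4.5]{LiangYangConstructions}. If that class is $\Phi(\omega')$ intersected with a pointwise condition, we argue as follows. Take $X \in \Phi(\omega^\wedge_m)$ with $X(i) \in [\omega^\wedge_m]^{\perp_1}$ for all $i$.

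1. Since $Q$ is left rooted, each $X(i)$ is built by transfinite extensions of the cokernels $C_j(X) \in \omega^\wedge_m$. Since $\omega^\wedge_m$ is closed under extensions, we would like to conclude $X(i) \in \omega^\wedge_m$.
2. For $\Phi$ we must compare $C_i(X) \in \omega'$ with the condition $X(i) \in [\omega^\wedge_m]^{\perp_1}$. Here we use that $[\omega^\wedge_m]^{\perp_1}$ is closed under extensions and direct summands, together with the splitting of the sequences $\coprod X(s(a)) \rightarrowtail X(i) \twoheadrightarrow C_i(X)$ whenever $\Ext^1$ vanishes.

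If Step 1 fails because infinite coproducts are not available, we instead fall back on the precise formulation of \cite[Thm. 4.5]{LiangYangConstructions}, which presumably already uses exactly the classes $\Phi(\omega)$ and ${\rm Rep}(Q,\omega^{\perp_1})$. In that case we simply substitute $\omega^\wedge_m$ in the appropriate place, keeping $\X^\wedge_m$ as the left thick class. We then verify {\sf (FP1)}--{\sf (FP4)} only where the substitution is not literal, reusing Lemma \ref{LemaGorro} and Corollary \ref{coro:inducedXmFP}.
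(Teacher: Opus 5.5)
Your outline matches the paper's proof: deduce $\omega^\wedge_m$-completeness, apply Corollary \ref{coro:inducedXmFP} to get the Frobenius pair $(\X^\wedge_m,\omega')$ with $\omega'=\omega^\wedge_m\cap[\omega^\wedge_m]^{\perp_1}$, then invoke \cite[Thm. 4.5]{LiangYangConstructions}. Your first two steps are correct. The gap is the last step, which carries the whole proof, and the hypotheses point to the missing idea. Corollary \ref{coro:inducedXmFP} needs only $\omega^\wedge_m$-completeness, so the stronger assumption that $(\omega^\wedge_m,[\omega^\wedge_m]^{\perp_1})$ is a complete cotorsion pair must be there for another reason. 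Also, the target class $\Phi(\omega^\wedge_m)\cap{\rm Rep}(Q,[\omega^\wedge_m]^{\perp_1})$ is built from the two halves of that cotorsion pair, not from $\omega'$. The Liang--Yang transfer takes, besides a Frobenius pair $(\X',\omega')$ in ${}_R\mathsf{Mod}$, a complete cotorsion pair $(\mathcal{U},\mathcal{V})$ whose core $\mathcal{U}\cap\mathcal{V}$ is $\omega'$ (with $\mathcal{U}\subseteq\X'$). It returns the Frobenius pair $(\Phi(\X'),\Phi(\mathcal{U})\cap{\rm Rep}(Q,\mathcal{V}))$. Take $(\mathcal{U},\mathcal{V})=(\omega^\wedge_m,[\omega^\wedge_m]^{\perp_1})$: its core is $\omega'$ by definition and $\omega^\wedge_m\subseteq\X^\wedge_m$. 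The stated pair then comes out literally, and no identification of classes is needed.

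The identification you propose instead does not work in general. Step 2 needs $\coprod_a X(s(a))\rightarrowtail X(i)\twoheadrightarrow C_i(X)$ to split, i.e.\ $\Ext^1_R(C_i(X),\coprod_a X(s(a)))=0$. That requires $\coprod_a X(s(a))\in[\omega^\wedge_m]^{\perp_1}$. But $\Ext^1$-right-orthogonal classes are closed under products, not under infinite coproducts, and a left rooted quiver may have infinitely many arrows ending at one vertex. Step 1 is true, but because $\omega^\wedge_m={}^{\perp_1}([\omega^\wedge_m]^{\perp_1})$ is closed under coproducts and transfinite extensions (Eklof's lemma), not merely because it is closed under extensions. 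Your fallback of substituting $\omega^\wedge_m$ for $\omega$ in a theorem about Frobenius pairs also fails, because $(\X^\wedge_m,\omega^\wedge_m)$ is not a Frobenius pair in general. For instance {\sf (FP3)} breaks for $R=\mathbb{Z}$, $\X=\omega={}_{\mathbb{Z}}\mathcal{P}$ and $m=1$: there $\mathbb{Z}/2\in\X^\wedge_1$ and $\mathbb{Z}\in\omega^\wedge_1$, yet $\Ext^1_{\mathbb{Z}}(\mathbb{Z}/2,\mathbb{Z})\neq 0$.
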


\begin{example}
From the previous, we can note that $$(\Phi({}_R\mathcal{GP}^\wedge_m), \Phi({}_R\mathcal{P}^\wedge_m) \cap {\rm Rep}(Q,[{}_R\mathcal{P}^\wedge_m]^{\perp_1}))$$ is a Frobenius pair in ${\rm Rep}(Q,{}_R\mathsf{Mod})$. Analogous Frobenius pairs can be obtained from the classes ${}_R\mathcal{PGF}_{\A}$ (if it is the left half of a hereditary cotorsion pair cogenerated by a set) and ${}_R\mathcal{GP}_{(\mathcal{L,A})}$ (if $(\mathcal{L,A})$ is a complete duality pair).
\end{example}

%%%%%%%%%%%%%%%%%%%%%%%%%%%%%%%%%%%%%
%%%%%%%%%%%%%%%%%%%%%%%%%%%%%%%%%%%%%

\subsection*{Complete Frobenius pairs in adjoint pairs}

In this last example we fix two additive functors functors $F \colon \C \to \mathfrak{D}$ and $G \colon \mathfrak{D} \to \C$ between abelian categories that form an adjoint pair $(G, F)$. The idea is to look for conditions so that $F$ reflects (complete) Frobenius pairs, that is, that $(\X,\omega)$ is a (complete) Frobenius pair in $\C$ whenever $(F(\X),F(\omega))$ is a (complete) Frobenius pair in $\mathfrak{D}$. Here, by $F(\X)$ and $F(\omega)$ we denote the \emph{essential images} of $F$ restricted to $\X$ and $\omega$, that is, $F(\X)$ (resp., $F(\omega)$) is the full subcategory of $\mathfrak{D}$ whose objects are given by those $D \in \mathfrak{D}$ for which there exists $X \in \X$ (resp., $W \in \omega$) such that $D$ is isomorphic to $FX$ (resp., to $FW$).

\begin{proposition} \label{Funtor-Frob}
Suppose $F$ is right exact (and so, exact) full and faithful. If $(\X, \omega)$ is a pair of classes of objects in $\C$ such that $(F(\X), F (\omega))$ is a Frobenius pair in $\mathfrak{D}$, then $\X$ is left thick. Moreover, if $G$ is left exact (and so, exact) and either $\mathfrak{D}$ has enough projective objects or $\C$ has enough injective objects,  then $(\X, \omega)$ is a Frobenius pair in $\C$. \\
\end{proposition}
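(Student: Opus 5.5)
The plan is to transport each closure property along $F$, using that a fully faithful exact functor both preserves and reflects short exact sequences. Since $F$ is fully faithful, it reflects isomorphisms, split monomorphisms and direct summands. Since it is exact, a sequence $A \rightarrowtail B \twoheadrightarrow C$ in $\C$ is exact if, and only if, $FA \to FB \to FC$ is exact in $\mathfrak{D}$. The "if" direction holds because $F$ is faithful and exact: the homology of the sequence is sent to the homology of its image, and a faithful exact functor kills no nonzero object (if $FH=0$ then $F(\id_H)=0$, so $\id_H=0$).

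\textbf{Left thickness of $\X$.} Let $X' \rightarrowtail X \twoheadrightarrow X''$ be exact in $\C$. Applying $F$ gives an exact sequence in $\mathfrak{D}$. If $X', X'' \in \X$, or if $X, X'' \in \X$, then two terms lie in $F(\X)$, and {\sf (FP1)} for $F(\X)$ places the third term in $F(\X)$. So we get $FX \cong FY$ (resp.\ $FX' \cong FY$) for some $Y \in \X$. Fullness and faithfulness lift this isomorphism to $X \cong Y$ (resp.\ $X' \cong Y$) in $\C$. For this to conclude, $\X$ should be closed under isomorphisms, which I take as a standing convention for classes.

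For direct summands, suppose $X = A \oplus B$ with $X \in \X$. Then $FA$ is a direct summand of $FX \in F(\X)$, so $FA \cong FY$ with $Y \in \X$, and hence $A \cong Y$. The same argument gives {\sf (FP2)} for $\omega$. For {\sf (FP4)}, $F(\omega) \subseteq F(\X)$ forces $\omega \subseteq \X$ by the same isomorphism lifting. Given $X \in \X$, we have an exact sequence $FX \rightarrowtail W' \twoheadrightarrow D$ with $W' \cong FW$, $W \in \omega$, and $D \cong FX'$, $X' \in \X$. By fullness, the composite $FX \to FW$ is $F(f)$ for some $f \colon X \to W$, and the epimorphism $FW \to FX'$ is $F(g)$ for some $g \colon W \to X'$. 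Faithfulness gives $gf=0$, since $F(gf)=0$. Because exactness is reflected, $X \rightarrowtail W \twoheadrightarrow X'$ is exact in $\C$.

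\textbf{The Ext condition {\sf (FP3)}.} I expect this to be the main obstacle, because it needs $\Ext^i_{\C}(X,W) \cong \Ext^i_{\mathfrak{D}}(FX,FW)$, or at least the vanishing of the left side whenever the right side vanishes. This is where the extra hypotheses come in. Since $G \dashv F$ and both functors are exact, a derived adjunction holds: $\Ext^i_{\C}(GD,C) \cong \Ext^i_{\mathfrak{D}}(D,FC)$. This can be proved with projective resolutions in $\mathfrak{D}$, since $G$ is exact and preserves projectives as the left adjoint of an exact functor. Alternatively, it can be proved with injective coresolutions in $\C$, since $F$ is exact and preserves injectives as the right adjoint of an exact functor. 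Either way the needed resolution exists under the stated assumption.

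Full faithfulness of $F$ makes the counit $GF \to \id_{\C}$ an isomorphism. Therefore
\[
\Ext^i_{\C}(X,W) \cong \Ext^i_{\C}(GFX,W) \cong \Ext^i_{\mathfrak{D}}(FX,FW) = 0
\]
for $i>0$, by {\sf (FP3)} in $\mathfrak{D}$. To keep the argument clean, I will verify the derived adjunction carefully via the Yoneda description. In the injective-coresolution version, take $W \rightarrowtail I^\bullet$ in $\C$. Applying $F$ gives an injective coresolution of $FW$, and $\Hom_{\mathfrak{D}}(D,FI^\bullet) \cong \Hom_{\C}(GD,I^\bullet)$ then yields the isomorphism on cohomology. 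This completes {\sf (FP1)}–{\sf (FP4)}.
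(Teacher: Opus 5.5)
Your proposal is correct and follows essentially the same route as the paper: {\sf (FP1)}, {\sf (FP2)} and $\omega \subseteq \X$ come from lifting isomorphisms $FX \cong FY$ back to $\C$ using full faithfulness, and {\sf (FP3)} comes from the derived adjunction $\Ext^i_{\C}(GD,C) \cong \Ext^i_{\mathfrak{D}}(D,FC)$ (which the paper cites from Adams--Rieffel rather than proving) together with the counit isomorphism $GF \cong \id_{\C}$. The only small difference is in {\sf (FP4)}: you reflect exactness of $X \rightarrowtail W \twoheadrightarrow X'$ using only that $F$ is faithful and exact, whereas the paper applies the exact functor $G$ and compares via the counit, so your version shows that this step does not need exactness of $G$.
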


\begin{proof} ~\
\begin{itemize}
\item {\sf (FP1)}: Suppose we are given a short exact sequence $X' \rightarrowtail X \twoheadrightarrow X''$ with $X'' \in \X$. Let us show that $X' \in \X$ if, and only if, $X \in \X$. Since $F$ is an exact functor, we have the exact sequence $FX' \rightarrowtail FX \twoheadrightarrow FX''$ with $FX'' \in F(\X)$. Suppose $X' \in \X$. Then, $FX' \in F(\X)$, and so $FX \in F(\X)$ since $F(\X)$ is left thick. It follows that $FX \simeq FY$ with $Y \in \X$. Now since $F$ is full and faithful, we have by \cite[Prop. 3.4.1]{BorceuxI} that the counit $\varepsilon \colon GF \to {\rm id}_{\C}$ is a natural isomorphism, and so $X \simeq GFX \simeq GFY \simeq Y$. Since $\X$ is closed under isomorphisms, we obtain $X \in \X$. In a similar way, we can show that $X \in \X$ implies that $X' \in \X$. 

Now if $X'$ is a direct summand of $X \in \X$, then $FX'$ is a direct summand of $FX \in F(\X)$. Since $F(\X)$ is closed under direct summands, we have that $FX' \in F(\X)$. Repeating the argument above, we can conclude that $X' \in \X$, and hence $\X$ is closed under direct summands. 

\item {\sf (FP2)}: That $\omega$ is closed under direct summands follows as in part {\sf (FP1)}. 
\end{itemize}
Now suppose that $G$ is right exact and that either $\mathfrak{D}$ has enough projective objects or $\C$ has enough injective objects. 
\begin{itemize}
\item {\sf (FP3)}: Let $X \in \X$ and $W \in \omega$, and write $X \simeq GFX$. By \cite[Coroll. 1]{AdamsRieffel}, we have that $\Ext^i_{\C}(X,W) \cong \Ext^i_{\C}(GFX,W) \cong \Ext^i_{\mathfrak{D}}(FX,FW) = 0$, where the last equality holds since $\id_{F(\X)}(F(\omega)) = 0$. Hence, $\id_{\X}(\omega) = 0$.

\item {\sf (FP4)}: We show that $\omega \subseteq \X$ and that $\omega$ is a relative cogenerator in $\X$. The first part is straightforward. Now let $X \in \X$. Then, $FX \in F(\X)$, and since $F(\omega)$ is a relative cogenerator in $F(\X)$, we can assert that there exists $FW$ and $FX'$ with $W \in \omega$ and $X' \in \X$ in a short exact sequence $FX \rightarrowtail FW \twoheadrightarrow FX'$. On the other hand, since $F$ is full and faithful, there are unique arrows $f \colon X \to W$ and $g \colon W \to X'$ such that $FX \rightarrowtail FW = Ff$ and $FW \twoheadrightarrow FX' = Fg$. We show that $f$ is monic, $g$ is epic, and that the sequence $X \stackrel{f}\rightarrowtail W \stackrel{g}\twoheadrightarrow X'$ is exact. Indeed, from the assumptions we have the commutative diagram
\[
\begin{tikzpicture}[description/.style={fill=white,inner sep=2pt}] 
\matrix (m) [matrix of math nodes, row sep=2.3em, column sep=2.3em, text height=1.25ex, text depth=0.25ex] 
{ 
GFX & GFW & GFX' \\
X & W & X' \\
}; 
\path[->] 
(m-1-1) edge node[left] {\footnotesize$\varepsilon_X$} (m-2-1) (m-1-2) edge node[left] {\footnotesize$\varepsilon_W$} (m-2-2) (m-1-3) edge node[left] {\footnotesize$\varepsilon_{X'}$} (m-2-3) 
;
\path[>->]
(m-1-1) edge node[above] {\footnotesize$GFf$} (m-1-2)
(m-2-1) edge node[below] {\footnotesize$f$} (m-2-2)
;
\path[->>]
(m-1-2) edge node[above] {\footnotesize$GFg$} (m-1-3)
(m-2-2) edge node[below] {\footnotesize$g$} (m-2-3)
;
\end{tikzpicture}
\]
where the top row is exact since $G$ is exact, and the vertical arrows are isomorphisms. Then, it is clear that $X \stackrel{f}\rightarrowtail W \stackrel{g}\twoheadrightarrow X'$ is exact.
\end{itemize}
\end{proof}

Now we study how completeness is reflected by $F$. We shall need the following lemma.

\begin{lemma}\label{lem:adjunction}
If $F$ is full, faithful and right exact, then the following assertions hold true for every class $\omega$ of objects in $\C$:
\begin{enumerate}[(1)]
\item $F(\omega^{\wedge}_m) \subseteq [F(\omega)]^\wedge_m$.

\item $F(\omega^{\wedge}_m) \supseteq [F(\omega)]^\wedge_m$, provided that $G$ is also left exact.
\end{enumerate}
\end{lemma}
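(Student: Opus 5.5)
Both parts come down to moving a finite $\omega$-resolution across $F$ in one direction or the other. The setting is the adjunction $(G,F)$ with $F\colon \C\to\mathfrak{D}$ full, faithful and right exact. Since $F$ is a right adjoint it is left exact, so $F$ is exact. As in the proof of Proposition \ref{Funtor-Frob}, full faithfulness of $F$ makes the counit $\varepsilon\colon GF\to \id_{\C}$ a natural isomorphism.

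For (1), take $M\in\omega^\wedge_m$ together with an exact sequence
\[
0\to W_m\to\cdots\to W_1\to W_0\to M\to 0, \qquad W_j\in\omega .
\]
Because $F$ is exact, applying it gives an exact sequence
\[
0\to FW_m\to\cdots\to FW_0\to FM\to 0
\]
in $\mathfrak{D}$, each of whose terms $FW_j$ lies in $F(\omega)$. Hence $FM\in[F(\omega)]^\wedge_m$. The essential image $F(\omega^\wedge_m)$ consists of objects isomorphic to such $FM$, and $[F(\omega)]^\wedge_m$ is closed under isomorphisms, so the inclusion follows.

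For (2), assume in addition that $G$ is left exact, so $G$ is exact. Let $D\in[F(\omega)]^\wedge_m$, with an exact sequence
\[
0\to D_m\to\cdots\to D_0\to D\to 0, \qquad D_j\cong FW_j,\ W_j\in\omega .
\]
Applying the exact functor $G$ yields an exact sequence
\[
0\to GD_m\to\cdots\to GD_0\to GD\to 0,
\]
where $GD_j\cong GFW_j\cong W_j$ via $\varepsilon$. Since $\omega$ is closed under isomorphisms (as it is taken to be a full subcategory closed under isomorphic copies), we get $GD\in\omega^\wedge_m$.

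It remains to show $D\cong FGD$, so that $D\in F(\omega^\wedge_m)$. The plan is to compare the given resolution of $D$ with the image under $FG$ of the resolution of $GD$, via the unit $\eta\colon \id_{\mathfrak{D}}\to FG$. Each $\eta_{D_j}$ is an isomorphism: writing $D_j\cong FW_j$, naturality of $\eta$ lets us work with $\eta_{FW_j}$ instead, and the triangle identity $F\varepsilon\circ\eta F=\id_F$ together with $\varepsilon$ being an isomorphism shows that $\eta_{FW_j}$ is an isomorphism. Both complexes are exact, because $FG$ is exact. Naturality of $\eta$ gives a morphism of exact complexes that is an isomorphism in every degree except possibly at $D$. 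The five lemma, applied to the cokernels of the maps $D_1\to D_0$ and $FGD_1\to FGD_0$ (both cokernels being $D$, respectively $FGD$), then shows that $\eta_D$ is an isomorphism. This comparison step, establishing that the unit is an isomorphism on $D$ itself and not only on the terms $D_j$, is the main point of the argument. The case $m=0$ is immediate, since then $D\cong FW_0$ directly.
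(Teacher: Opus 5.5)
Your proof is correct and follows essentially the same route as the paper: you pull the $F(\omega)$-resolution of $D$ back to $\C$ with the exact functor $G$ and the counit isomorphism to get $GD \in \omega^\wedge_m$, and then identify $D$ with $FGD$ because both are cokernels of the same map $FW_1 \to FW_0$. The only difference is cosmetic. The paper uses fullness to write the differentials as $Ff_k$ and compares the two cokernels directly, while you make the comparison map explicit as the unit $\eta_D$ via the triangle identity, which is if anything slightly more careful.
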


\begin{proof}
Part (1) is straightforward, so we focus on showing (2). Let $D \in \mathfrak{D}$ be an object in $[F(\omega)]^\wedge_m$. Since $F$ is full and faithful, we have an exact sequence
\[
FW_m \stackrel{Ff_m}\rightarrowtail FW_{m-1} \to \cdots \to FW_1 \xrightarrow{Ff_1} FW_0 \twoheadrightarrow D
\]
where $W_k \in \omega$ for every $0 \leq k \leq m$. After applying $G$ we obtain the following commutative diagram with exact rows and where the vertical arrows are isomorphisms:
\[
\begin{tikzpicture}[description/.style={fill=white,inner sep=2pt}] 
\matrix (m) [matrix of math nodes, row sep=2.3em, column sep=3em, text height=1.25ex, text depth=0.25ex] 
{ 
GFW_m & GFW_{m-1} & \cdots & GFW_1 & GFW_0 & GD \\
W_m & W_{m-1} & \cdots & W_1 & W_0 & GD \\
}; 
\path[->] 
(m-1-1) edge node[left] {\footnotesize$\varepsilon_{W_m}$} (m-2-1) (m-1-2) edge node[left] {\footnotesize$\varepsilon_{W_{m-1}}$} (m-2-2) (m-1-4) edge node[left] {\footnotesize$\varepsilon_{W_1}$} (m-2-4) (m-1-5) edge node[left] {\footnotesize$\varepsilon_{W_0}$} (m-2-5) 
(m-1-2) edge (m-1-3) (m-1-3) edge (m-1-4) (m-1-4) edge node[above] {\footnotesize$GFf_1$} (m-1-5)
(m-2-2) edge (m-2-3) (m-2-3) edge (m-2-4) (m-2-4) edge node[below] {\footnotesize$f_1$} (m-2-5)
;
\path[>->]
(m-1-1) edge node[above] {\footnotesize$GFf_m$} (m-1-2)
(m-2-1) edge node[below] {\footnotesize$f_m$} (m-2-2)
;
\path[->>]
(m-1-5) edge (m-1-6)
(m-2-5) edge  (m-2-6)
;
\path[-,font=\scriptsize]
(m-1-6) edge [double, thick, double distance=2pt] (m-2-6)
;
\end{tikzpicture}
\]
Then, we have the following commutative diagram with exact rows:
\[
\begin{tikzpicture}[description/.style={fill=white,inner sep=2pt}] 
\matrix (m) [matrix of math nodes, row sep=2.3em, column sep=3em, text height=1.25ex, text depth=0.25ex] 
{ 
FW_m & FW_{m-1} & \cdots & FW_1 & FW_0 & FGD \\
FW_m & FW_{m-1} & \cdots & FW_1 & FW_0 & D \\
}; 
\path[->] 
(m-1-2) edge (m-1-3) (m-1-3) edge (m-1-4) (m-1-4) edge node[above] {\footnotesize$Ff_1$} (m-1-5)
(m-2-2) edge (m-2-3) (m-2-3) edge (m-2-4) (m-2-4) edge node[below] {\footnotesize$Ff_1$} (m-2-5)
(m-1-6) edge (m-2-6)
;
\path[>->]
(m-1-1) edge node[above] {\footnotesize$Ff_m$} (m-1-2)
(m-2-1) edge node[below] {\footnotesize$Ff_m$} (m-2-2)
;
\path[->>]
(m-1-5) edge (m-1-6)
(m-2-5) edge (m-2-6)
;
\path[-,font=\scriptsize]
(m-1-1) edge [double, thick, double distance=2pt] (m-2-1)
(m-1-2) edge [double, thick, double distance=2pt] (m-2-2)
(m-1-4) edge [double, thick, double distance=2pt] (m-2-4)
(m-1-5) edge [double, thick, double distance=2pt] (m-2-5)
;
\end{tikzpicture}
\]
It follows that $D \simeq FGD$, where $GD \in \omega^\wedge_m$, and hence $D \in F(\omega^\wedge_m)$.
\end{proof}

\begin{proposition}
Suppose either $\mathfrak{D}$ has enough projective objects or $\C$ has enough injective objects, that $G$ is left exact, and $F$ is right exact, full and faithful. If $(F(\X), F (\omega))$ is a Frobenius pair in $\mathfrak{D}$ which is left $[F(\omega)]^\wedge_m$-complete in $[F(\X)]^{\perp_1}$, then $(\X ,\omega)$ is a Frobenius pair in $\C$ which is left $\omega^\wedge_m$-complete in $\X^{\perp_1}$. 
\end{proposition}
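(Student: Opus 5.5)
By Proposition \ref{Funtor-Frob}, $(\X,\omega)$ is already a Frobenius pair in $\C$: $F$ is exact, full and faithful, $G$ is exact, and one of the two conditions on enough projective or injective objects holds. So only condition {\sf (LC)} of Definition \ref{Def-w-complete} needs proof. Throughout I will use three facts. First, the counit $\varepsilon\colon GF\to \id_{\C}$ is a natural isomorphism, since $F$ is fully faithful. Second, \cite[Coroll. 1]{AdamsRieffel} gives natural isomorphisms $\Ext^i_{\C}(GD,C)\cong \Ext^i_{\mathfrak{D}}(D,FC)$ for all $i\ge 1$, with $D\in\mathfrak{D}$ and $C\in\C$; this is where the exactness of $G$ and the hypotheses on enough projectives or injectives are used. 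Third, Lemma \ref{lem:adjunction} (1) and (2) give $F(\omega^\wedge_m)=[F(\omega)]^\wedge_m$.

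Let $M\in\X^{\perp_1}$. I first show that $FM\in[F(\X)]^{\perp_1}$. Any object of $F(\X)$ is isomorphic to $FX$ with $X\in\X$, and
\[
\Ext^1_{\mathfrak{D}}(FX,FM)\cong\Ext^1_{\C}(GFX,M)\cong\Ext^1_{\C}(X,M)=0.
\]
By the left completeness hypothesis in $\mathfrak{D}$ there is then a short exact sequence $K'\rightarrowtail L\twoheadrightarrow FM$ with $L\in[F(\omega)]^\wedge_m$ and $K'\in([F(\omega)]^\wedge_m)^{\perp_1}$.

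Applying the exact functor $G$ gives a short exact sequence $GK'\rightarrowtail GL\twoheadrightarrow GFM\simeq M$. By Lemma \ref{lem:adjunction} (2), $L\simeq FL_0$ for some $L_0\in\omega^\wedge_m$, so $GL\simeq GFL_0\simeq L_0\in\omega^\wedge_m$.

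It remains to show $GK'\in(\omega^\wedge_m)^{\perp_1}$, and this is the delicate step. The adjunction isomorphism above moves $G$ in the first variable, whereas here $G$ sits in the second variable. I would first try to use the isomorphism $K'\simeq FGK'$, which is what the proof of Lemma \ref{lem:adjunction} (2) effectively produces. Granting it, for $N\in\omega^\wedge_m$ we get
\[
\Ext^1_{\C}(N,GK')\cong\Ext^1_{\C}(GFN,GK')\cong\Ext^1_{\mathfrak{D}}(FN,FGK')\cong\Ext^1_{\mathfrak{D}}(FN,K')=0,
\]
because $FN\in F(\omega^\wedge_m)\subseteq[F(\omega)]^\wedge_m$ by Lemma \ref{lem:adjunction} (1).

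If the isomorphism $K'\simeq FGK'$ is not available in general, I would fall back on a pullback argument. Pull back the sequence along the unit $FM\to FGFM$, which is an isomorphism because $F$ is fully faithful. Then use the fact that the images of $FGK'$ and $K'$ agree inside $FGL\simeq L$, and compare the two sequences via the five lemma. Alternatively, fully faithful exact functors induce injections on $\Ext^1$ (Yoneda classes pull back), so $\Ext^1_{\C}(N,GK')\hookrightarrow\Ext^1_{\mathfrak{D}}(FN,FGK')$. Checking that $FGK'\simeq K'$, the kernel of $FGL\to FGFM\simeq FM$, is the main obstacle; everything else is formal.
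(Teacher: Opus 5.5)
Your main route is the paper's proof step for step. You show $FM\in[F(\X)]^{\perp_1}$ via the adjunction isomorphism on $\Ext$, take $K'\rightarrowtail L\twoheadrightarrow FM$, write $L\simeq FL_0$ with $L_0\in\omega^\wedge_m$ by Lemma \ref{lem:adjunction}, apply $G$, and conclude with $\Ext^1_{\C}(N,GK')\cong\Ext^1_{\mathfrak{D}}(FN,FGK')\cong\Ext^1_{\mathfrak{D}}(FN,K')=0$. The step you flag, $K'\simeq FGK'$, holds, and the paper only justifies it by saying it follows as in the proof of Lemma \ref{lem:adjunction}; here is a one-line argument: by fullness the epimorphism $FL_0\to FM$ equals $F(g)$ for some $g\colon L_0\to M$, and $F$ preserves kernels, so $K'\simeq F(\mathrm{Ker}(g))$ lies in the essential image of $F$, where the unit is invertible because $F(\varepsilon_C)\circ\eta_{FC}=\mathrm{id}_{FC}$ with $\varepsilon_C$ an isomorphism.
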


\begin{proof}
The fact that $(\X,\omega)$ is a Frobenius pair follows from Proposition \ref{Funtor-Frob}. Now let $Y \in \X^{\perp_1}$. We show that $FY \in [F(\X)]^{\perp_1}$, so consider an object $FX$ with $X \in \X$. We have that $\Ext^1_{\mathfrak{D}}(FX,FY) \cong \Ext^1_{\C}(GFX,Y) \cong \Ext^1_{\C}(X,Y) = 0$. Then, $FY \in [F(\X)]^{\perp_1}$, and so there is an exact sequence $D' \rightarrowtail D \twoheadrightarrow FY$ with $D \in [F(\omega)]^\wedge_m$ and $D' \in ([F(\omega)]^\wedge_m)^{\perp_1}$. By Lemma \ref{lem:adjunction}, we have that $[F(\omega)]^\wedge_m = F(\omega^\wedge_m)$ and so $D \simeq FL$ for some $L \in \omega^\wedge_m$. Thus, we have the exact sequence 
\[
D' \rightarrowtail FL \twoheadrightarrow FY
\] 
where $FL \twoheadrightarrow FY = Fg$ for some morphism $g \colon L \to Y$. Now consider the following commutative diagram with exact rows and where the vertical arrows are isomorphisms:
\[
\begin{tikzpicture}[description/.style={fill=white,inner sep=2pt}] 
\matrix (m) [matrix of math nodes, row sep=2.3em, column sep=2.3em, text height=1.25ex, text depth=0.25ex] 
{ 
GD' & GFL & GFY \\
GD' & L & Y \\
}; 
\path[->] 
(m-1-2) edge node[left] {\footnotesize$\varepsilon_L$} (m-2-2) (m-1-3) edge node[left] {\footnotesize$\varepsilon_Y$} (m-2-3) 
;
\path[>->]
(m-1-1) edge (m-1-2)
(m-2-1) edge (m-2-2)
;
\path[->>]
(m-1-2) edge node[above] {\footnotesize$GFg$} (m-1-3)
(m-2-2) edge node[below] {\footnotesize$g$} (m-2-3)
;
\path[-,font=\scriptsize]
(m-1-1) edge [double, thick, double distance=2pt] (m-2-1)
;
\end{tikzpicture}
\]
It is only left to show that $GD' \in [\omega^\wedge_m]^{\perp_1}$. As in the last part of the previous proof, we can note that $FGD' \simeq D' \in ([F(\omega)]^\wedge_m)^{\perp_1} = [F(\omega^\wedge_m)]^{\perp_1}$. Now every $L' \in \omega^\wedge_m$ we have that
\[
\Ext^1_{\C}(L',GD') \cong \Ext^1_{\C}(GFL',GD') \cong \Ext^1_{\mathfrak{D}}(FL',FGD') \cong \Ext^1_{\mathfrak{D}}(FL',D') = 0,
\]
where the last equality holds since $FL' \in F(\omega^\wedge_m)$ and $D' \in [F(\omega^\wedge_m)]^{\perp_1}$.
\end{proof}

%%%%%%%%%%%%%%%%%%%%%%%%%%%%%%%%%%%%%
%%%%%%%%%%%%%%%%%%%%%%%%%%%%%%%%%%%%%
%%%%%%%%%%%%%%%%%%%%%%%%%%%%%%%%%%%%%
%%%%%%%%%%%%%%%%%%%%%%%%%%%%%%%%%%%%%

\section*{Declaration of AI usage}

No form of AI was used during the preparation of this paper, nor in the research that it presents.

%%%%%%%%%%%%%%%%%%%%%%%%%%%%%%%%%%%%%
%%%%%%%%%%%%%%%%%%%%%%%%%%%%%%%%%%%%%
%%%%%%%%%%%%%%%%%%%%%%%%%%%%%%%%%%%%%
%%%%%%%%%%%%%%%%%%%%%%%%%%%%%%%%%%%%%

\section*{Funding}

The first named author was fully supported by a SECIHTI posdoctoral fellowship at Centro de Ciencias Matem\'aticas, UNAM. The second named was supported by the following institutions:  ANII - Agencia Nacional de Investigación e Innovación and PEDECIBA - Programa de Desarrollo de las Ciencias Básica. 

Both authors thank the financial support from the  projects “IN100124 PAPIIT-UNAM” and “Despegue Cient\'ifico 2023” from PEDECIBA. \\

%\textbf{Acknowledgements} The author thanks to his childs without this work could be finished more quickly.

\bibliographystyle{alpha}
\bibliography{biblioFP}

\begin{thebibliography}{BMPS19}

\bibitem[AA02]{AkinciAlizade}
K.~D. Akinci and R.~Alizade.
\newblock Special precovers in cotorsion theories.
\newblock {\em Proc. Edinb. Math. Soc. (2)}, 45(2):411--420, 2002.

\bibitem[AB89]{AB89}
M.~Auslander and R.-). Buchweitz.
\newblock The homological theory of maximal {C}ohen-{M}acaulay approximations.
\newblock Number~38, pages 5--37. 1989.
\newblock Colloque en l'honneur de Pierre Samuel (Orsay, 1987).

\bibitem[AR67]{AdamsRieffel}
W.~W. Adams and M.~A. Rieffel.
\newblock Adjoint functors and derived functors with an application to the
  cohomology of semigroups.
\newblock {\em J. Algebra}, 7:25--34, 1967.

\bibitem[BGH14]{BravoGillespieHovey-arxiv}
D.~Bravo, J.~Gillespie, and M.~Hovey.
\newblock The stable module category of a general ring, 2014.

\bibitem[BMPS19]{BMSP}
V.~Becerril, O.~Mendoza, M.~A. P\'erez, and Valente Santiago.
\newblock Frobenius pairs in abelian categories. {C}orrespondences with
  cotorsion pairs, exact model categories, and {A}uslander-{B}uchweitz
  contexts.
\newblock {\em J. Homotopy Relat. Struct.}, 14(1):1--50, 2019.

\bibitem[BMS21]{BMS}
V.~Becerril, O.~Mendoza, and V.~Santiago.
\newblock Relative {G}orenstein objects in abelian categories.
\newblock {\em Comm. Algebra}, 49(1):352--402, 2021.

\bibitem[Bor08]{BorceuxI}
F.~Borceux.
\newblock {\em Handbook of categorical algebra. {Volume} 1: {Basic} category
  theory}, volume~50 of {\em Encycl. Math. Appl.}
\newblock Cambridge: Cambridge University Press, paperback reprint of the
  hardback edition 1994 edition, 2008.

\bibitem[BR07]{BeligiannisReiten}
A.~Beligiannis and I.~Reiten.
\newblock {\em Homological and homotopical aspects of torsion theories}, volume
  883 of {\em Mem. Am. Math. Soc.}
\newblock Providence, RI: American Mathematical Society (AMS), 2007.

\bibitem[CI{\v{S}}25]{Cortes}
M.~Cort{\'e}s-Izurdiaga and J.~{\v{S}}aroch.
\newblock The cotorsion pair generated by the {Gorenstein} projective modules
  and ${{\lambda}}$-pure-injective modules, 2025.

\bibitem[EI24]{Estrada24}
S.~Estrada and A.~Iacob.
\newblock Gorenstein projective precovers and finitely presented modules.
\newblock {\em Rocky Mountain J. Math.}, 54(3):715--721, 2024.

\bibitem[EIP20]{EstradaMarco}
S.~Estrada, A.~Iacob, and M.~A. P\'erez.
\newblock Model structures and relative {G}orenstein flat modules and chain
  complexes.
\newblock In {\em Categorical, homological and combinatorial methods in
  algebra}, volume 751 of {\em Contemp. Math.}, pages 135--175. Amer. Math.
  Soc., [Providence], RI, [2020] \copyright 2020.

\bibitem[EIY17]{EIY17}
S.~Estrada, A.~Iacob, and K.~Yeomans.
\newblock Gorenstein projective precovers.
\newblock {\em Mediterr. J. Math.}, 14(1):Paper No. 33, 10, 2017.

\bibitem[EJ00]{EnJen00}
E.~E. Enochs and O.~M.~G. Jenda.
\newblock {\em {Relative Homological Algebra}}, volume~30 of {\em De Gruyter
  Expositions in Mathematics}.
\newblock Walter de Gruyter \& Co., Berlin, 2000.

\bibitem[EJ11]{EnJen00-2}
E.~E. Enochs and O.~M.~G. Jenda.
\newblock {\em {Relative Homological Algebra. {Vol}. 2}}, volume~54 of {\em De
  Gruyter Expo. Math.}
\newblock Berlin: Walter de Gruyter, 2nd revised ed. edition, 2011.

\bibitem[EM24]{Rachid}
R.~El~Maaouy.
\newblock Model structures, {$n$}-{G}orenstein flat modules and {PGF}
  dimensions.
\newblock {\em Proc. Edinb. Math. Soc. (2)}, 67(4):1241--1264, 2024.

\bibitem[Fie72]{Fieldhouse}
D.~J. Fieldhouse.
\newblock Character modules, dimension and purity.
\newblock {\em Glasg. Math. J.}, 13:144--146, 1972.

\bibitem[Gil04]{GillespieFlat}
J.~Gillespie.
\newblock The flat model structure on {{\(\mathbf{Ch}(R)\)}}.
\newblock {\em Trans. Am. Math. Soc.}, 356(8):3369--3390, 2004.

\bibitem[Gil11]{GillespieExact}
J.~Gillespie.
\newblock Model structures on exact categories.
\newblock {\em J. Pure Appl. Algebra}, 215(12):2892--2902, 2011.

\bibitem[Gil16a]{GillespieAdvances}
J.~Gillespie.
\newblock Gorenstein complexes and recollements from cotorsion pairs.
\newblock {\em Adv. Math.}, 291:859--911, 2016.

\bibitem[Gil16b]{GillespieHereditary}
J.~Gillespie.
\newblock Hereditary abelian model categories.
\newblock {\em Bull. Lond. Math. Soc.}, 48(6):895--922, 2016.

\bibitem[Gil19]{Gillespie-DP}
J.~Gillespie.
\newblock Duality pairs and stable module categories.
\newblock {\em J. Pure Appl. Algebra}, 223(8):3425--3435, 2019.

\bibitem[GLZ24]{Chains}
N.~Gao, X.~S. Lu, and P.~Zhang.
\newblock Chains of model structures arising from modules of finite
  {Gorenstein} dimension.
\newblock Preprint, {arXiv}:2403.05232 [math.{RT}] (2024), 2024.

\bibitem[GR99]{GRcomplexes}
J.~R. Garc{\'{\i}}a~Rozas.
\newblock {\em Covers and envelopes in the category of complexes of modules},
  volume 407 of {\em Chapman Hall/CRC Res. Notes Math.}
\newblock Boca Raton, FL: Chapman {and} Hall/CRC, 1999.

\bibitem[GT06]{GT06}
R.~G\"obel and J.~Trlifaj.
\newblock {\em Approximations and endomorphism algebras of modules}, volume~41
  of {\em De Gruyter Expositions in Mathematics}.
\newblock Walter de Gruyter GmbH \& Co. KG, Berlin, 2006.

\bibitem[HMP21]{HMP-n-cot}
M.~Huerta, O.~Mendoza, and M.~A. P\'erez.
\newblock {$n$}-cotorsion pairs.
\newblock {\em J. Pure Appl. Algebra}, 225(5):Paper No. 106556, 34, 2021.

\bibitem[HMP22]{HMP-cut}
M.~Huerta, O.~Mendoza, and M.~A. P\'erez.
\newblock Cut cotorsion pairs.
\newblock {\em Glasg. Math. J.}, 64(3):548--585, 2022.

\bibitem[Hol04]{Holm04}
H.~Holm.
\newblock Gorenstein homological dimensions.
\newblock {\em J. Pure Appl. Algebra}, 189(1-3):167--193, 2004.

\bibitem[Hov99]{HoveyBook}
M.~Hovey.
\newblock {\em {Model Categories}}, volume~63 of {\em Math. Surv. Monogr.}
\newblock Providence, RI: American Mathematical Society, 1999.

\bibitem[Hov02]{Hov02}
M.~Hovey.
\newblock Cotorsion pairs, model category structures, and representation
  theory.
\newblock {\em Math. Z.}, 241(3):553--592, 2002.

\bibitem[Iac20]{Alina2020}
A.~Iacob.
\newblock Projectively coresolved {Gorenstein} flat and {Ding} projective
  modules.
\newblock {\em Commun. Algebra}, 48(7):2883--2893, 2020.

\bibitem[LMY26]{LiangMaYang-contraejemplo}
L.~Liang, Y.~J. Ma, and G.~Yang.
\newblock Frobenius pairs arising from {Morita} rings.
\newblock {\em J. Algebra Appl.}, 25(3):22, 2026.
\newblock Id/No 2550337.

\bibitem[LY22]{LiangYangConstructions}
L.~Liang and G.~Yang.
\newblock Constructions of {Frobenius} pairs in abelian categories.
\newblock {\em Mediterr. J. Math.}, 19(2):17, 2022.
\newblock Id/No 76.

\bibitem[MD07]{MaoDing}
L.~X. Mao and N.~Q. Ding.
\newblock Envelopes and covers by modules of finite {FP}-injective and flat
  dimensions.
\newblock {\em Comm. Algebra}, 35(3):833--849, 2007.

\bibitem[{\v{S}}{\v{S}}20]{Saroch}
J.~{\v{S}}aroch and J.~{\v{S}}\v{t}ov\'i\v{c}ek.
\newblock Singular compactness and definability for {$\Sigma$}-cotorsion and
  {G}orenstein modules.
\newblock {\em Selecta Math. (N.S.)}, 26(2):Paper No. 23, 40, 2020.

\bibitem[\v{S}13]{Stovi14}
J.~\v{S}\v{t}ov\'i\v{c}ek.
\newblock Exact model categories, approximation theory, and cohomology of
  quasi-coherent sheaves.
\newblock In {\em Advances in representation theory of algebras}, EMS Ser.
  Congr. Rep., pages 297--367. Eur. Math. Soc., Z\"urich, 2013.

\bibitem[WWL16]{WeiWangLiu-DingModels}
C.~Q. Wei, L.~M. Wang, and Z.~K. Liu.
\newblock Abelian model structures and {Ding} homological dimensions.
\newblock {\em Hacet. J. Math. Stat.}, 45(5):1461--1474, 2016.

\end{thebibliography}

\end{document}